\def\BState{\State\hskip-\ALG@thistlm}
\Crefname{equation}{}{}
\crefname{equation}{}{}
\begin{document}

\title{A New Variational Model for Joint Image Reconstruction and Motion Estimation in Spatiotemporal Imaging}
\author{Chong Chen\thanks{LSEC, ICMSEC, Academy of Mathematics and Systems Science, Chinese Academy of Sciences, Beijing 100190, China (chench@lsec.cc.ac.cn).}, Barbara Gris\thanks{LJLL--Laboratoire Jacques-Louis Lions, Sorbonne Universit\'e, 75005 Paris, France (bgris.maths@gmail.com).} and Ozan \"Oktem\thanks{Department of Mathematics, KTH--Royal Institute of Technology, 10044 Stockholm, Sweden (ozan@kth.se).}}

\maketitle

\begin{abstract}
We propose a new variational model for joint image reconstruction and motion 
estimation in spatiotemporal imaging, which is investigated along a general 
framework that we present with shape theory. 
This model consists of two components, one for conducting modified static image reconstruction, 
and the other performs sequentially indirect image registration. 
For the latter, we generalize the large deformation diffeomorphic metric mapping 
framework into the sequentially indirect registration setting. 
The proposed model is compared theoretically against alternative approaches (optical flow based model and 
diffeomorphic motion models), and we demonstrate that the proposed model has desirable properties in terms of 
the optimal solution. The theoretical derivations and efficient algorithms are also presented for a time-discretized 
scenario of the proposed model, which show that the optimal solution of the time-discretized version is consistent with that of the 
time-continuous one, and most of the computational components is the easy-implemented linearized 
deformation. The complexity of the algorithm is analyzed as well. This work is concluded by some numerical 
examples in 2D space + time tomography with very sparse and/or highly noisy data.
\end{abstract}

\begin{keywords}
spatiotemporal imaging, image reconstruction, motion estimation, joint variational model, shape theory, large diffeomorphic deformations
\end{keywords}

\section{Introduction}
Image reconstruction is challenging in a spatiotemporal setting when the object being imaged undergoes a temporal evolution. 
This is the case in tomographic imaging of the heart or lung \cite{BuDiSch18,GiJiDaSc15} where it is important 
to estimate and compensate for the unknown motion of the organs. As an example, data in \ac{PET} cardiac 
imaging is acquired over a relatively long period of time (often in the range of minutes). The respiratory and 
cardiac motion cause a displacement of 20-40~mm to organs of interest \cite{ScLe00,WaViBe99}. 
Failing to correct for such motion leads to a degradation in image quality \cite{GiJiDaSc15}.

Data in spatiotemporal imaging is a time or quasi-time series and one obvious approach is to decompose 
it into sub-sets (gates) such that data within each gate is generated by the object in a fixed temporal 
state \cite{DaStJiScScSc09,LaDaBuScScSc06,Luci09}. Much work has been done along these lines in the 
context of \ac{PET} imaging of the heart or lung, where each gate is the phase of breathing and/or 
cardiac motion \cite{DaBuLaScSc07,GrGeMe14,BuDaWuSc09}. One also studies on how to optimize 
the gating in order to obtain the optimal image quality by reconstruction \cite{ElHaJoDe11,VaSeHuPa14}. 

Once data is gated, algorithms for spatiotemporal image reconstruction can be separated into two categories:  
first image reconstruction then motion estimation; joint image reconstruction and motion 
estimation \cite{BlNaRa12,GiJiDaSc15}. In the first category, one starts with applying static image 
reconstruction on data from each of the gates, resulting in a series of (low-resolution) images, 
then selects a reconstructed image as target and registers the other reconstructed images against 
this target, finally averages all the registered images to obtain the reconstructed image 
(see, for example, \cite{BaBr09,BaBr11,DaBuJiSc08,DaLaJiSc08,GiRuBu12,GiJiDaSc15}). 
The second category is establishing the joint tasks of image reconstruction and motion 
estimation into one model, then gaining the optimal solution to reconstruct the image in 
each gate. This is more complex and several approaches have been suggested for how to do 
this, such as \cite{BlNaRa12,ScMoFi09,TaKu08,GaCaSh11,LiZhZhGa15,BrPaOeKuKa12,
BrPaOeKa12,RiSaKnKa12,JiLoDoTi10,TaCaSr12,BhPaMi12,HiSzWaSaJo12,HiSzWaSaJo12,BrSaMaKa15,BuDiSch18}, and so forth. 

The approach taken in this paper belongs to the second category and the motion model makes use 
of diffeomorphic deformations. The latter are provided by the \ac{LDDMM} framework, which is a well-developed 
framework for diffeomorphic image registration (see, for instance, \cite{Tro98,DuGrMi98,MiTrYo02,BeMiTrYo05,GrMi07,Yo10,TrYo11,TrYo15,BrHo15,BrVi17}). 
Diffeomorphic deformations based on \ac{LDDMM} were used in \cite{HiSzWaSaJo12} for joint image reconstruction 
and motion estimation in 4D \ac{CT}, which is based on the growth model of \ac{LDDMM} \cite{GrMi07}.  
Later they were also used in \cite{ChOz18} for indirect image registration.  Nevertheless, 
this paper is dedicated to proposing a new joint variational model based on the principle of \ac{LDDMM}.

\paragraph{Contributions} The main contribution is a new variational model for joint image reconstruction and motion 
estimation in spatiotemporal imaging based on the \ac{LDDMM} framework, which is studied along a general 
framework of variational model that we present with deformable templates from shape theory. 
This model contains two components: one corresponding to modified static image reconstruction, 
and the other corresponding to sequentially indirect image registration. 
For the latter, we generalize the \ac{LDDMM} framework 
into the sequentially indirect registration setting. 

The mathematical properties of the proposed variational model is compared against the optical flow 
based model in \cite{BuDiSch18} and the diffeomorphic motion model in \cite{HiSzWaSaJo12}.
The comparison shows that the proposed model has some desirable properties in terms of the optimal solution, for example, 
guaranteeing elastically large diffeomorphic deformations, averagely distributed \wrt time $t$, and 
non-vanishing neither on the initial nor on the end time point, etc.
Moreover, a computationally efficient gradient-based iterative scheme is presented for the time-discretized version. 
More importantly, the optimal solution of the time-discretized problem is consistent with that of the time-continuous one.
Most of the computationally demanding parts relate to implementing the linearized deformations \cite{OkChDoRaBa16}.

\paragraph{Outline} A general variational model for joint image reconstruction and motion estimation is presented  
in \cref{sec:GeneralFramework_Imaging}. For self-contained, we revisit \ac{LDDMM} briefly 
in \cref{LDDMM}, then propose the new variational model in \cref{sec:LDDMM_based_model}. 
The \cref{sec:LDDMM_based_model_related} makes the mathematical 
comparison between the proposed model and the existing models. The \cref{sec:computed_method} gives the detailed 
numerical algorithms for solving the proposed model. The numerical experiments are 
performed in \cref{sec:numerical _experiments} to show the performance of the new model with 2D + time tomography. 
Finally, the \cref{sec:Conclusions} concludes the paper.

\section{A general variational model for joint image reconstruction and motion estimation}
\label{sec:GeneralFramework_Imaging}

The spatiotemporal (space $+$ time) image reconstruction is typically a spatiotemporal inverse problem.  
The aim is to estimate a spatially distributed quantity (image) that exhibits 
temporal variations from indirect time-dependent noisy observations (measured data). Hence, both the image and 
its motion are unknown.

\subsection{General spatiotemporal inverse problem}\label{subsec:InverProb}

Let $\signal \colon [t_0, t_1] \times \domain \to \Real^k$ denote the spatiotemporal image that we need to reconstruct. 
Here $k$ is the number of modalities (often $k=1$) and $\domain \subset \Real^n$ is spatial domain. 
Without loss of generality, the general (quasi-)time domain $[t_0, t_1]$ can be reparameterized onto $[0,1]$.

The spatiotemporal inverse problem is to reconstruct a spatiotemporal 
image $\signal(t,\Cdot) \in \RecSpace$ from 
measured data $\data(t,\Cdot) \in\DataSpace$ such that
\begin{equation}\label{eq:InvProb}  
     \data(t,\Cdot) = \ForwardOp\bigl(t, \signal(t,\Cdot)\bigr) + \noisedata(t,\Cdot) \quad \text{for $t \in [0, 1]$}, 
\end{equation}
where $\RecSpace$ (reconstruction space) is the vector space of all 
possible images on a fixed domain $\domain$, $\DataSpace$ (data space) is the  
vector space of all possible data, and $\noisedata(t,\Cdot) \in \DataSpace$ is the observation noise in data.  
Furthermore, $\ForwardOp(t, \Cdot) \colon \RecSpace \to \DataSpace$ is a time-dependent forward operator, 
for short denoted by $\ForwardOp_t$, that models how an image at time $t$ gives rise to data in 
absence of noise or measurement errors (\eg  a stack of Radon transforms with various geometric parameters of scanning 
for \ac{CT} and \ac{PET} and attenuated Radon transform instead for \ac{SPECT}, \etc) \cite{Na01}.  
Note that for spatiotemporal problems, the collection of geometric parameters of scanning is 
basically different in the temporal direction.  

A key step is to further specify the form of the spatiotemporal image $\signal(t,\Cdot)$ and here 
we will use the idea of deformable templates from shape theory.

\subsection{Spatiotemporal inverse problem based on shape theory}\label{sec:SpatialTemporShapeTheory}

Shape theory seeks to develop quantitative tools to study shapes and their variability, 
which can be pursued to work of D'Arcy Thompson \cite{Ar45}. Shapes of objects are  
considered as points in the shape space \cite{TrYo15}. The collections of deformable objects and 
deformations need to be defined. The former represent the objects whose shape 
we want to analyze, the latter are transformations that can act on the deformable objects. The underlying idea is 
that shapes are represented as a deformation of a template, so the template represents the ``shape invariant'' part of the 
object whereas the set of deformations model how various shapes arise. Shape similarity between two objects 
can then be quantified as the ``cost'' of deforming one object into the other by means of a minimal deformation in 
the set of deformations. For more details, the reader is referred to \cite{Yo10,GrMi07,MiTrYo15}. 

Based on the thought above, the spatial and temporal components of a spatiotemporal image can be separated as
\begin{equation}\label{eq:SeparateImage}
\signal(t,\Cdot) := \DefOpV(\diffeo_t, \template) \quad \text{for some $\diffeo_t \in \LieGroup$ and $\template \in \RecSpace$}.
\end{equation}
Here $\DefOpV \colon \LieGroup \times \RecSpace \to \RecSpace$ is a temporal evolution operator 
and $\LieGroup$ is the group of diffeomorphisms on $\domain$, \ie the set of invertible 
mappings that are continuously differentiable with a continuously differentiable inverse 
from $\domain$ to $\domain$. It is natural to require that $\DefOpV$ is a group action 
of $\LieGroup$ on $\RecSpace$, so we can write $\DefOpV(\diffeo_t, \template) := \diffeo_t . \template$ for simplicity. 
Furthermore, $\template \colon \domain \to \Real$ (template) is the time-independent spatial component and the deformation 
$\diffeo_t \colon \domain \to \domain$ governs the temporal evolution of the template. 

Therefore, the inverse problem in spatiotemporal imaging can now be written as
\begin{equation}\label{eq:InvProb_2}  
     \data(t,\Cdot) = \ForwardOp_t(\diffeo_t . \template) + \noisedata(t,\Cdot) \quad \text{for $t \in [0, 1]$}. 
\end{equation}
Notice that  $\signal(t,\Cdot)=\diffeo_t . \template$ is the spatiotemporal image at time $t$ generated 
from the template $\template$ and the 
deformation $\diffeo_t$. Hence, the above inverse problem calls for simultaneously recovering 
the time-independent template $\template$ and the time-dependent deformation $\diffeo_t$. 

\paragraph{Common group actions} A natural group action is the one that gives geometric deformations \cite{ChOz18}: 
\begin{equation}\label{eq:GeometricDeformation}
\diffeo_t . \template = \template \circ \diffeo_t^{-1},
\end{equation}
where ``$\circ$" denotes function composition. This group action yields a deformation that 
merely moves the position of the pixel/voxel but does not change its intensity.

An alternative group action is the one that corresponds to mass-preserving deformations \cite{ChOz18,Yo10}:
\begin{equation}\label{eq:MassPreservedDeform}
\diffeo_t . \template = \bigl\vert \Diff(\diffeo_t^{-1}) \bigr\vert \template \circ \diffeo_t^{-1},
\end{equation}
where $\bigl\vert \Diff(\phi) \bigr\vert$ denotes the determinant of the Jacobian of $\phi$. 
This group action adjusts the intensity values but preserves the total mass.

\subsection{A general framework of joint image reconstruction and motion estimation}\label{sec:VariatRegCT}

First of all, through simple analysis, it is not difficult to observe that the 
whole inverse problem \cref{eq:InvProb_2} can be divided into two small subproblems. 
With given evolution deformation $\diffeo_t$,  the original problem boils down to a  
modified static image reconstruction problem: one just needs to reconstruct 
template ``$\template$" from noisy measured data. Since the data sets are measured from 
deformed template by known deformations, we name it with ``modified". On the other hand, 
with given template $\template$, the remaining goal is to estimate the evolution 
parameter ``$\diffeo_t$" from noisy measured data. We call 
it sequentially indirect image registration, which is a generalization of indirect 
image registration \cite{ChOz18}. 

The inverse problem in \cref{eq:InvProb_2} is ill-posed due to a variety of reasons. A variational formulation 
offers a flexible framework for regularizing a wide range of inverse problems \cite{ScGrGrHaLe09}.
The idea is to add regularization functionals penalize a maximum likelihood solution and thereby act as stabilisers. 

In \cref{eq:InvProb_2} we seek to recover both the template $\template \in \RecSpace$ and the 
temporal deformation $\diffeo_t \in \LieGroup$ simultaneously from time-series 
data $\data(t,\Cdot) \in \DataSpace$. A general variational model for this inverse problem reads as 
\begin{equation}\label{eq:VarReg_2}
 \min_{\substack{\template \in \RecSpace \\ \diffeo_t \in \LieGroup}} \biggl\{
    \int_{0}^{1} \Bigl[\DataDisc\bigl( \ForwardOp_t(\diffeo_t . \template), \data(t,\Cdot) \bigr) \dint t + \mu_2\RegFunc_2(\diffeo_t) \Bigr]\dint t + \mu_1 \RegFunc_1(\template)
    \biggr\}, 
\end{equation}  
where $\mu_1, \mu_2$ are nonnegative regularization parameters. 

The above $\DataDisc : \DataSpace \times \DataSpace \rightarrow \Real_+$ is the data discrepancy 
functional quantifying the mismatch in data space $\DataSpace$ that is often designed as the form of 
$\LpSpace^p$-norm or \ac{KL} divergence. The above $\Real_+$ denotes the set  
of nonnegative real number. The selection of $\DataDisc$ depends generally on the distribution of 
the noise in data, \eg applying squared $\LpSpace^2$-norm  
\begin{equation}\label{eq:DataMatching_L2}
\DataDisc\bigl( \data_1, \data_2 \bigr) := \bigr\Vert \data_1 - \data_2 \bigr\Vert^2_2
\end{equation}
to Gaussian distribution and \ac{KL} divergence
\begin{equation}\label{eq:DataMatching_KL}
\DataDisc\bigl( \data_1, \data_2 \bigr) := \int_{\domain_{\DataSpace}} \data_1(y) - \data_2(y) \ln \bigl(\data_1(y) \bigr) \dint y
\end{equation}
to Poisson distribution and for $ \data_1, \data_2 \in \DataSpace$.

Moreover, the spatial regularization $\RegFunc_1 : \RecSpace \to \Real_+$ introduces well-posedness 
by encoding priori knowledge about the image $f$, which is frequently based on the form 
of $\LpSpace^p$-norm of its gradient magnitude or certain sparse representation. 
Typically, taking the squared $\LpSpace^2$-norm of the gradient magnitude 
\begin{equation}\label{eq:TikReg}
\RegFunc_1(f) := \Vert \nabla f \Vert^2_2
\end{equation}
is known to produce smooth images whereas selecting the $\LpSpace^1$-norm of the gradient 
magnitude, \ie \ac{TV} regularization
\begin{equation}\label{eq:TVReg}
\RegFunc_1(f) := \Vert \nabla f \Vert_1
\end{equation}
yields edge-preserving images \cite{RuOs92}.

Subsequently, the key problem is to describe how to generate the evolution deformation $\diffeo_t$ and to select 
shape regularization $\RegFunc_2 : \LieGroup \to \Real_+$ for the $\diffeo_t$. 
We will consider this problem within the \ac{LDDMM} framework.

\section{A new variational model for joint image reconstruction and motion estimation}\label{sec:LDDMM_recon_model}

This section introduces a new variational model of the framework \cref{eq:VarReg_2} based on \ac{LDDMM}. 
First we recall the basic principle of \ac{LDDMM} for self-contained, and then give the proposed model.

\subsection{The \ac{LDDMM} framework} \label{LDDMM}

The \ac{LDDMM} framework outlined here offers a generic way to generate a flow of 
diffeomorphisms through velocity field. In this framework, the linearized deformation  
is considered as infinitesimal deformation, and the displacement field is seen as an 
instantaneous velocity field. Under certain regularity, the composition of such 
small deformations in the limit generates a flow of diffeomorphisms given 
as the solution to an ordinary differential equation (ODE) \cite{Yo10}.

More precisely, given a velocity field $\velocityfield : [0,1] \times \domain \to \Real^n$,  
a flow $\diffeo_t$ is generated by the following ODE: 
\begin{equation}\label{eq:FlowEq}
 \begin{cases} 
    \partial_t \diffeo_t(x) = \velocityfield\bigl( t, \diffeo_t(x) \bigr) & \\[0.5em]
    \diffeo_0(x)=x &  
   \end{cases} 
   \quad\text{for $x\in \domain$ and $0 \leq t \leq 1$.}    
\end{equation}
Note that $\diffeo_0 = \Id$, \ie the flow starts at the identity deformation (mapping). If the velocity field $\velocityfield$ is 
sufficiently regular, then the solution to the above ODE is well-defined and that becomes a 
flow of diffeomorphisms. We subsequently define the precise notion of regularity that is needed.
\begin{definition}[Admissible space \cite{Yo10}]\label{def:Admissible}
A Hilbert space $\Vspace \subset \Smooth^1_0(\domain, \Real^n)$ is admissible if it is (canonically) embedded in 
$\Smooth^1_0(\domain, \Real^n)$ with the $\Vert \cdot \Vert_{1,\infty}$ norm, \ie there exists a constant $C>0$ such that
\[    \Vert \vfield \Vert_{1,\infty}  \leq C \Vert \vfield \Vert_{\Vspace}
     \quad\text{for all $\vfield \in \Vspace$.} \]
In the above, $\Vert \vfield \Vert_{1,\infty} := \Vert \vfield \Vert_{1} + \Vert \Diff\vfield \Vert_{\infty}$ for 
$\vfield \in \Smooth^1_0(\domain, \Real^n)$.
\end{definition}

Then a proper space of velocity fields is well-defined as
\begin{equation}\label{eq:FlowSpace}
  \LpSpace^p([0,1], \Vspace) := 
     \Bigl\{ 
       \text{$\velocityfield  : \velocityfield(t,\cdot) \in \Vspace$ 
       and $\Vert \velocityfield \Vert_{ \LpSpace^p([0,1], \Vspace)}  < \infty$ for $1 \leq p \leq \infty$}
     \Bigr\}
\end{equation} 
with the associated norm
\[ \Vert \velocityfield \Vert_{\LpSpace^p([0,1], \Vspace)} :=  
      \biggl( \int_0^1 \bigl\Vert \velocityfield(t,\cdot) \bigr\Vert^p_{\Vspace}\dint t \biggr)^{1/p}.
\]  
For short, let $\Xspace{p}$ denote $\LpSpace^p([0,1], \Vspace)$. 
Note that $\Xspace{2}$ is a Hilbert space with inner product
\[
\langle \velocityfield, \velocityfieldother\rangle_{\Xspace{2}} = \int_0^1 \bigl\langle\velocityfield(t,\cdot), \velocityfieldother(t,\Cdot) \bigr\rangle_{\Vspace}\dint t \quad\text{for $\velocityfield, \velocityfieldother \in \Xspace{2}$}.
\]

\begin{remark}\label{re:remark1}
A useful case is when $\Vspace$ is a \ac{RKHS} with a symmetric and positive-definite 
reproducing kernel. Then $\Vspace$  is an admissible Hilbert space  \cite{BrHo15}. In the rest of the paper, 
the space of vector fields is selected as an admissible \ac{RKHS} for the advantages of 
sufficient smoothness and fast computability \cite{ChOz18}.
\end{remark}

Furthermore, a flow of diffeomorphisms can be generated via an admissible velocity field, 
which is stated as the following theorem.

\begin{theorem}[\cite{Yo10,BrHo15}]\label{thm:FlowDiffeo}
Let $\Vspace$ be an admissible Hilbert space and $\velocityfield \in \Xspace{2}$ be a velocity field. 
Then the ODE in \cref{eq:FlowEq} admits a unique solution 
$\diffeo^{\velocityfield} \in \Smooth^1_0([0,1] \times \domain, \domain)$, such that 
for $t\in [0,1]$, the mapping $\diffeo_t^{\velocityfield} : \domain \rightarrow \domain$ is 
a $\Smooth^1$-diffeomorphism on $\domain$.
\end{theorem}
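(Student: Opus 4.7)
The plan is to reformulate the flow equation \cref{eq:FlowEq} as the integral equation
$\diffeo_t(x) = x + \int_0^t \velocityfield(s,\diffeo_s(x))\dint s$
and to run a Picard-type / Carath\'eodory fixed-point argument. The admissibility hypothesis is what makes the estimates close: combining it with the Cauchy--Schwarz inequality yields
\[ \int_0^1 \bigl\Vert\velocityfield(s,\Cdot)\bigr\Vert_{1,\infty}\dint s \leq C\,\Vert\velocityfield\Vert_{\Xspace{2}}, \]
so $\velocityfield(s,\Cdot)$ is spatially Lipschitz with an $L^2$-in-time Lipschitz constant $\kappa(s):=C\Vert\velocityfield(s,\Cdot)\Vert_{\Vspace}$. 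Invoking the classical Carath\'eodory existence theorem together with Gr\"onwall's inequality produces, for each $x\in\domain$, a unique absolutely continuous curve $t\mapsto\diffeo_t(x)$ satisfying the ODE for almost every $t$; integrability of $\kappa$ on $[0,1]$ rules out finite-time blow-up, and the vanishing of $\velocityfield(t,\Cdot)$ at the boundary (implicit in $\Smooth^1_0$) confines trajectories to $\domain$.

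Next I would promote these pointwise-in-$x$ solutions to a $\Smooth^1$-map on $[0,1]\times\domain$. A second Gr\"onwall estimate applied to the difference of two trajectories gives
\[ \bigl\Vert \diffeo_t(x)-\diffeo_t(y)\bigr\Vert \leq \exp\!\Bigl(\int_0^1 \kappa(s)\dint s\Bigr)\Vert x-y\Vert, \]
which together with absolute continuity in $t$ yields joint continuity. Formally differentiating the flow equation produces the linear variational ODE
\[ \partial_t\bigl(\Diff\diffeo_t(x)\bigr) = \Diff\velocityfield\bigl(t,\diffeo_t(x)\bigr)\,\Diff\diffeo_t(x), \quad \Diff\diffeo_0(x) = \Id, \]
whose time-dependent coefficient is bounded by $\kappa(t)$ and hence admits a unique fundamental matrix $M(t,x)$ with $\det M(t,x)\neq 0$ on $[0,1]$ by Liouville's formula. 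The principal technical obstacle is rigorously identifying $M(t,x)$ with the actual spatial derivative of $\diffeo_t$: this requires showing that the difference quotients $h^{-1}\bigl(\diffeo_t(x+h e_i)-\diffeo_t(x)\bigr)$ converge uniformly to $M(t,x)e_i$, for which I would combine the joint continuity above with the modulus of continuity of $\Diff\velocityfield(t,\Cdot)$ inherited from $\Vspace\hookrightarrow\Smooth^1_0$ and a third Gr\"onwall argument on the residual error. Continuity of $M(t,\Cdot)$ then upgrades regularity to $\Smooth^1$.

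Finally, to establish the $\Smooth^1$-diffeomorphism property of each slice $\diffeo_t^{\velocityfield}$, I would construct its inverse by running the flow backwards in time. For fixed $t\in(0,1]$, set $\widetilde\velocityfield(s,x):=-\velocityfield(t-s,x)$; a simple change of variables shows $\widetilde\velocityfield\in\Xspace{2}$ with the same norm, so the previous paragraphs produce its $\Smooth^1$-flow $\psi^{(t)}_s$. The two curves $s\mapsto\psi^{(t)}_s(\diffeo_t(x))$ and $s\mapsto\diffeo_{t-s}(x)$ both satisfy $\partial_s\eta_s=\widetilde\velocityfield(s,\eta_s)$ with initial value $\diffeo_t(x)$, so uniqueness forces them to agree at $s=t$, giving $\psi^{(t)}_t\circ\diffeo_t=\Id$; the symmetric argument yields the other composition. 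Hence $\diffeo_t^{\velocityfield}$ is a $\Smooth^1$-bijection of $\domain$ whose inverse $\psi^{(t)}_t$ is also $\Smooth^1$, and because $\velocityfield(t,\Cdot)$ vanishes at the boundary of $\domain$ the trajectories leave boundary points fixed, placing the whole flow in $\Smooth^1_0([0,1]\times\domain,\domain)$.
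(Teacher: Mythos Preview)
Your sketch is a sound outline of the standard proof, but note that the paper itself does not prove this statement: \cref{thm:FlowDiffeo} is quoted directly from the references \cite{Yo10,BrHo15} and carries no proof in the body of the paper. So there is no ``paper's own proof'' to compare against here; the authors simply rely on the literature.

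That said, what you have written is essentially the argument one finds in those cited sources (particularly Younes's monograph): recast the flow as an integral equation, use admissibility to obtain an $L^1$-in-time spatial Lipschitz bound via Cauchy--Schwarz, invoke Carath\'eodory existence plus Gr\"onwall for uniqueness and global-in-time existence, differentiate to get the linear variational equation for $\Diff\diffeo_t$, and construct the inverse by time reversal. The one place to be slightly careful is the upgrade from the fundamental matrix $M(t,x)$ to an honest spatial derivative of $\diffeo_t$: admissibility only gives $\Diff\velocityfield(t,\Cdot)$ bounded and continuous, not uniformly continuous, so the difference-quotient argument needs a localisation (work on compact subsets, or use that trajectories over $[0,1]$ stay in a fixed compact set when $\velocityfield\in\Smooth^1_0$). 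With that caveat your plan is correct and matches the cited proofs.
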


Let us define
\begin{equation}\label{eq:SubLieGroupDefine}
 \LieGroup_{\Vspace} := \Bigl\{ \diffeo : \diffeo=\gelement{0,1}{\velocityfield}
      \text{ for some $\velocityfield \in \Xspace{2}$} 
   \Bigr\},
\end{equation}
where 
\begin{equation}\label{eq:FlowRelation}
\gelement{s,t}{\velocityfield} := \diffeo_t^{\velocityfield} \circ (\diffeo_s^{\velocityfield})^{-1} \quad\text{for $0 \leq t,s \leq 1$}
\end{equation} 
and $\diffeo_t^{\velocityfield}$ denotes the solution to the ODE in \cref{eq:FlowEq} with 
given $\velocityfield \in \Xspace{2}$. For $\diffeo_0^{\velocityfield} = \Id$, 
by \cref{eq:FlowRelation} we know 
\begin{equation}\label{eq:diff_trans}
\diffeo_t^{\velocityfield} = \diffeo_{0,t}^{\velocityfield}, \quad (\diffeo_t^{\velocityfield})^{-1} = \diffeo_{t,0}^{\velocityfield}.
\end{equation} 
Next several important properties about $\LieGroup_{\Vspace}$ are stated as follows.

\begin{theorem}[\cite{Yo10,BrHo15}]\label{thm:LieGroupProperty}
Let $\Vspace$ be an admissible Hilbert space, $\LieGroup_{\Vspace}$ be defined in \cref{eq:SubLieGroupDefine},
and $\LieGroupMetric{\Vspace} : \LieGroup_{\Vspace} \times \LieGroup_{\Vspace} \rightarrow \Real_+$ be defined as
\begin{equation}\label{eq:DiffMetric1}
  \LieGroupMetric{\Vspace}(\diffeo,\diffeoother) := 
  \inf_{\substack{\velocityfield \in \Xspace{2} \\ \diffeoother = \diffeo \circ \gelement{0,1}{\velocityfield}}}\, 
     \Vert \velocityfield \Vert_{\Xspace{2}}
  \quad\text{for $\diffeo,\diffeoother \in \LieGroup_{\Vspace}$.}
\end{equation}
Then $\LieGroup_{\Vspace}$ is a group for the composition of functions, and $\LieGroup_{\Vspace}$ is a complete 
metric space under the metric $\LieGroupMetric{\Vspace}$. 
For each $\diffeo, \diffeoother \in \LieGroup_{\Vspace}$, there 
exists $\velocityfield \in \Xspace{2}$ satisfying $\diffeoother = \diffeo \circ \gelement{0,1}{\velocityfield}$, \ie $\LieGroupMetric{\Vspace}(\diffeo,\diffeoother)  = \Vert \velocityfield \Vert_{\Xspace{2}}$.
\end{theorem}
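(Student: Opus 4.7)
The plan is to prove the three assertions in turn: the group structure of $\LieGroup_{\Vspace}$, the completeness of $(\LieGroup_{\Vspace}, \LieGroupMetric{\Vspace})$, and the attainment of the infimum in \cref{eq:DiffMetric1}.

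For the group structure, I would verify identity, closure, and inverses. The identity comes from $\velocityfield \equiv 0$, whose flow under \cref{eq:FlowEq} is $\Id$. Closure under composition is handled by a standard time-rescaling concatenation: given $\diffeo = \gelement{0,1}{\velocityfield}$ and $\diffeoother = \gelement{0,1}{\velocityfieldother}$, define
\[
\velocityfield'(t,\Cdot) := \begin{cases} 2\velocityfieldother(2t,\Cdot) & t \in [0,1/2], \\ 2\velocityfield(2t-1,\Cdot) & t \in [1/2,1], \end{cases}
\]
which lies in $\Xspace{2}$ with $\Vert \velocityfield' \Vert^2_{\Xspace{2}} = 2\Vert \velocityfield \Vert^2_{\Xspace{2}} + 2\Vert \velocityfieldother \Vert^2_{\Xspace{2}}$ and whose time-one flow equals $\diffeo \circ \diffeoother$ by the uniqueness/semigroup content of \cref{thm:FlowDiffeo}. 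Inverses arise from the time-reversed velocity $\widetilde\velocityfield(t,\Cdot) := -\velocityfield(1-t,\Cdot)$: a direct check against \cref{eq:FlowEq} shows $\gelement{0,1}{\widetilde\velocityfield} = \diffeo^{-1}$, and $\widetilde\velocityfield$ has the same $\Xspace{2}$-norm as $\velocityfield$.

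Next I would verify that $\LieGroupMetric{\Vspace}$ is a metric. Symmetry follows from the time-reversal construction (the two infima in \cref{eq:DiffMetric1} coincide), and the triangle inequality follows from the concatenation construction together with passing to infima. For non-degeneracy, if $\LieGroupMetric{\Vspace}(\diffeo,\diffeoother) = 0$ then there exist $\velocityfield_n \in \Xspace{2}$ with $\diffeoother = \diffeo \circ \gelement{0,1}{\velocityfield_n}$ and $\Vert \velocityfield_n \Vert_{\Xspace{2}} \to 0$; admissibility (\cref{def:Admissible}) bounds $\Vert \velocityfield_n(t,\Cdot) \Vert_{1,\infty}$, and a Gr\"onwall estimate applied to \cref{eq:FlowEq} yields $\Vert \gelement{0,1}{\velocityfield_n} - \Id \Vert_{\infty} \to 0$, forcing $\diffeoother = \diffeo$. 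Completeness is obtained by a telescoping argument: for a Cauchy sequence $\diffeo_n$, choose $\velocityfield_n$ with $\Vert \velocityfield_n \Vert_{\Xspace{2}} \leq \LieGroupMetric{\Vspace}(\diffeo_n,\diffeo_{n+1}) + 2^{-n}$ and connecting $\diffeo_n$ to $\diffeo_{n+1}$, concatenate rescaled copies into a single element of $\Xspace{2}$, and use the same Gr\"onwall control to identify the limit flow as an element of $\LieGroup_{\Vspace}$.

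The technical heart, and the main obstacle, is the attainment of the infimum. I would take a minimizing sequence $\velocityfield_n \in \Xspace{2}$ with $\diffeoother = \diffeo \circ \gelement{0,1}{\velocityfield_n}$ and $\Vert \velocityfield_n \Vert_{\Xspace{2}} \to \LieGroupMetric{\Vspace}(\diffeo,\diffeoother)$. Since $\Xspace{2}$ is Hilbert and the sequence is bounded, Banach--Alaoglu yields a subsequence with $\velocityfield_n \rightharpoonup \velocityfield$ weakly in $\Xspace{2}$. The delicate step is upgrading this weak convergence to pointwise convergence of the flows $\gelement{0,1}{\velocityfield_n} \to \gelement{0,1}{\velocityfield}$ on $\domain$. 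This is precisely where the admissible \ac{RKHS} structure of $\Vspace$ (\cref{re:remark1}) is indispensable: pointwise evaluation is weakly continuous on $\Vspace$, which, combined with dominated convergence in time and a Gr\"onwall argument applied to \cref{eq:FlowEq}, transfers weak convergence of velocities into convergence of the flow maps. Once this is in hand, $\diffeoother = \diffeo \circ \gelement{0,1}{\velocityfield}$, and weak lower semicontinuity of the Hilbert norm gives
\[
\Vert \velocityfield \Vert_{\Xspace{2}} \leq \liminf_{n\to\infty} \Vert \velocityfield_n \Vert_{\Xspace{2}} = \LieGroupMetric{\Vspace}(\diffeo,\diffeoother),
\]
so equality holds and $\velocityfield$ is the desired minimizer.
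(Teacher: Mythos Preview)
The paper does not prove \cref{thm:LieGroupProperty} at all: it is stated as a cited result from \cite{Yo10,BrHo15}, with no accompanying argument. So there is no ``paper's own proof'' to compare against; your proposal is being measured against the standard treatment in those references, and in broad outline it matches that treatment (time-reversal for inverses and symmetry, concatenation for closure and the triangle inequality, Gr\"onwall estimates for non-degeneracy and completeness, weak compactness plus weak-to-pointwise flow convergence for attainment of the infimum).

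Two points are worth tightening. First, the concatenation you wrote gives $\Vert \velocityfield' \Vert_{\Xspace{2}}^2 = 2\Vert \velocityfield \Vert_{\Xspace{2}}^2 + 2\Vert \velocityfieldother \Vert_{\Xspace{2}}^2$, hence $\Vert \velocityfield' \Vert_{\Xspace{2}} = \sqrt{2}\sqrt{\Vert \velocityfield \Vert_{\Xspace{2}}^2 + \Vert \velocityfieldother \Vert_{\Xspace{2}}^2}$, which exceeds $\Vert \velocityfield \Vert_{\Xspace{2}} + \Vert \velocityfieldother \Vert_{\Xspace{2}}$ in general; passing to infima therefore does \emph{not} yield the triangle inequality directly. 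The standard fix is either to concatenate with an adapted split point $\lambda = \Vert \velocityfieldother \Vert_{\Xspace{2}} / (\Vert \velocityfield \Vert_{\Xspace{2}} + \Vert \velocityfieldother \Vert_{\Xspace{2}})$, or to invoke the constant-speed reparametrization lemma showing the $\Xspace{2}$-infimum equals the $\Xspace{1}$-infimum, after which concatenation is additive. Second, your appeal to the \ac{RKHS} structure via \cref{re:remark1} is legitimate but slightly misleading as stated: the theorem hypothesizes only that $\Vspace$ is admissible, and admissibility (continuous embedding into $\Smooth^1_0$) already forces point evaluation to be continuous, so $\Vspace$ is automatically an \ac{RKHS}; you should say this rather than invoke \cref{re:remark1} as an extra assumption.
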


This distance can then be used as a regularization term for image registration via the following LDDMM formulation:
\begin{equation}\label{eq:LDDMM_form_0}
\min_{\diffeo \in \LieGroup_{\Vspace}}  \bigl\Vert \diffeo . I_0 - I_1 \bigr\Vert^2_{\LpSpace^2(\domain)} + \mu\, \LieGroupMetric{\Vspace}^2(\Id,\diffeo),
\end{equation}
where $I_0, I_1 \in \LpSpace^2(\domain)$ are two given images, and $\mu$ is a nonnegative regularization parameter.

The minimum in \cref{eq:DiffMetric1} is reached and then it is shown in \cite[Lemma 11.3]{Yo10} that the previous formulation is equivalent to the following variational model, with a regularization term defined on velocity fields instead of diffeomorphisms:
\begin{equation}\label{eq:LDDMM_form}
\begin{split}
&\min_{\velocityfield \in \Xspace{2}} \bigl\Vert \diffeo_{0, 1}^{\velocityfield} . I_0 - I_1 \bigr\Vert^2_{\LpSpace^2(\domain)} + \mu \int_{0}^{1} \bigl\Vert \velocityfield(t,\cdot) \bigr\Vert^2_{\Vspace}\dint t  \\
&\quad\,\text{s.t.  $\diffeo_{0,1}^{\velocityfield}$ solves \ac{ODE} \cref{eq:FlowEq} at time $t=1$.}
\end{split}
\end{equation}

Hence, the regularization term for image registration by \ac{LDDMM} is formulated as 
\begin{equation}\label{eq:RegTerm_t1}
 \RegFunc(\diffeo) := \LieGroupMetric{\Vspace}^2(\Id,\diffeo) = \int_{0}^{1} \bigl\Vert \velocityfield(t,\cdot) \bigr\Vert^2_{\Vspace}\dint t,
\end{equation}
where the above $\velocityfield$ is an existing minimum 
for $\LieGroupMetric{\Vspace}(\Id,\diffeo)  = \Vert \velocityfield \Vert_{\Xspace{2}}$ such 
that $\velocityfield \in \Xspace{2}$ satisfying $\diffeo = \Id \circ \gelement{0,1}{\velocityfield}$ 
(see \cref{thm:LieGroupProperty}).

\subsection{Spatiotemporal reconstruction with \ac{LDDMM}}\label{sec:LDDMM_based_model}

We assume that the temporal deformation $\diffeo_t^{\velocityfield}$ in \cref{eq:VarReg_2} is 
generated by the flow equation \cref{eq:FlowEq} as in \ac{LDDMM}. 
According to \cref{thm:FlowDiffeo}, the generated flow $\diffeo_t^{\velocityfield}$ is 
diffeomorphism on $\domain$ if the velocity field $\velocityfield \in \Xspace{2}$. 
Consequently, combining \cref{thm:LieGroupProperty} with \cref{eq:RegTerm_t1} implies that 
the shape regularization $\RegFunc_2$ for the temporal deformation $\diffeo_t^{\velocityfield}$ 
in \cref{eq:VarReg_2} can be designed as 
\begin{equation}\label{eq:RegTerm_tt}
\RegFunc_2(\diffeo_t^{\velocityfield}) := \int_{0}^{t} \bigl\Vert \velocityfield(\tau,\cdot) \bigr\Vert^2_{\Vspace}\dint \tau.
\end{equation}
Note that by \cref{thm:LieGroupProperty}, the above $\velocityfield$ is also a minimum 
for $\LieGroupMetric{\Vspace}(\Id,\diffeo_1^{\velocityfield})  = \Vert \velocityfield \Vert_{\Xspace{2}}$.

Using \cref{eq:diff_trans}, we have $\RegFunc_2(\diffeo_t^{\velocityfield}) = \RegFunc_2(\diffeo_{0,t}^{\velocityfield})$. 
Considering the general framework in \cref{sec:VariatRegCT}, as a special form of \cref{eq:VarReg_2}, 
the new variational  model for joint image reconstruction and motion estimation for
spatiotemporal imaging becomes 
\begin{equation}\label{eq:VarReg_LDDMM_2}
\begin{split}
 &\min_{\substack{\template \in \RecSpace \\ \velocityfield \in \Xspace{2}}} \int_{0}^{1} \left[\DataDisc\bigl( \ForwardOp_t(\diffeo_{0,t}^{\velocityfield} . \template), \data(t,\Cdot) \bigr)  + 
   \mu_2\int_{0}^{t} \bigl\Vert \velocityfield(\tau,\cdot) \bigr\Vert^2_{\Vspace}\dint \tau \right] \dint t  + \mu_1 \RegFunc_1(\template)  \\
 & \quad\,\, \text{s.t.  $\diffeo_{0,t}^{\velocityfield}$ solves \ac{ODE} \cref{eq:FlowEq}.}
  \end{split}
\end{equation}   
By simply changing the order of integration for the second term in \cref{eq:VarReg_LDDMM_2}, 
we obtain the equivalent formulation:
\begin{equation}\label{eq:VarReg_LDDMM_2_equiv}
\begin{split}
 &\min_{\substack{\template \in \RecSpace \\ \velocityfield \in \Xspace{2}}} \int_{0}^{1} \left[\DataDisc\bigl( \ForwardOp_t(\diffeo_{0,t}^{\velocityfield} . \template), \data(t,\Cdot) \bigr)  + 
   \mu_2 (1-t) \bigl\Vert \velocityfield(t,\cdot) \bigr\Vert^2_{\Vspace} \right] \dint t  + \mu_1 \RegFunc_1(\template)  \\
 & \quad\,\, \text{s.t.  $\diffeo_{0,t}^{\velocityfield}$ solves \ac{ODE} \cref{eq:FlowEq}.}
  \end{split}
\end{equation} 
The model \cref{eq:VarReg_LDDMM_2} is termed \emph{time-continuous version} of the proposed model. Furthermore, the above model 
can be restated as \ac{PDE}-constrained optimal control formulation, which is given in the following theorem. 
\begin{theorem}\label{thm:EquivalencePDEOptProb_2}
Let $\template \in \RecSpace$ and $\signal \colon [0,1] \times \domain \to \Real$ be defined as 
\begin{equation}\label{eq:Orbit_c1_2}
  \signal(t,\Cdot) := \diffeo_{0,t}^{\velocityfield} . \template \quad\text{for $0 \leq t \leq 1$}, 
\end{equation}
where $\diffeo_{0,t}^{\velocityfield}$ is a diffeomorphism on $\domain$ given by \cref{eq:FlowRelation}.
Assume furthermore that $\signal(t, \Cdot) \in \RecSpace$, and $\RecSpace$ is a sufficiently smooth space. 
Then, \cref{eq:VarReg_LDDMM_2} with the group action given by geometric deformation 
in \cref{eq:GeometricDeformation} is equivalent to 
\begin{equation}\label{eq:LDDMMPDEConstrained_2}
\begin{split}
 &\min_{\substack{\signal(t,\Cdot) \in \RecSpace \\ \velocityfield \in \Xspace{2}}} \int_{0}^{1} \left[\DataDisc\Bigl( \ForwardOp_t\bigr(\signal(t,\Cdot)\bigl), \data(t,\Cdot) \Bigr)  
  +  \mu_2\int_{0}^{t} \bigl\Vert \velocityfield(\tau,\cdot) \bigr\Vert^2_{\Vspace}\dint \tau \right] \dint t + \mu_1 \RegFunc_1\bigl(\signal(0,\Cdot)\bigr) \\
 & \quad\,\, \text{s.t.  $\partial_t \signal(t, \Cdot) + \bigl\langle \grad \signal(t,\cdot), \velocityfield(t,\cdot) \bigr\rangle_{\Real^n} = 0$.}
  \end{split}
\end{equation}  
With the group action given by mass-preserving deformation in \cref{eq:MassPreservedDeform},  
then \cref{eq:VarReg_LDDMM_2}  is equivalent to 
\begin{equation}\label{eq:LDDMMPDEConstrained_mp_2}
\begin{split}
 &\min_{\substack{\signal(t,\Cdot) \in \RecSpace \\ \velocityfield \in \Xspace{2}}} \int_{0}^{1} \left[\DataDisc\Bigl( \ForwardOp_t\bigl(\signal(t,\Cdot)\bigr), \data(t,\Cdot) \Bigr)  
  +  \mu_2\int_{0}^{t} \bigl\Vert \velocityfield(\tau,\cdot) \bigr\Vert^2_{\Vspace}\dint \tau \right] \dint t + \mu_1 \RegFunc_1\bigl(\signal(0,\Cdot)\bigr) \\
 & \quad\,\, \text{s.t. $\partial_t \signal(t, \Cdot) +  \grad\cdot\bigl(\signal(t, \Cdot)\, \velocityfield(t, \Cdot) \bigr)= 0$.}
  \end{split}
\end{equation}  
\end{theorem}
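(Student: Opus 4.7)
The plan is to reduce each equivalence to the observation that, for $\velocityfield \in \Xspace{2}$ (so that the diffeomorphic flow $\diffeo_{0,t}^{\velocityfield}$ from \cref{thm:FlowDiffeo} is well-defined), the trajectory $\signal(t,\Cdot) := \diffeo_{0,t}^{\velocityfield} . \template$ defined in \cref{eq:Orbit_c1_2} is in bijection with the solutions of the associated transport or continuity PDE that satisfy the initial condition $\signal(0,\Cdot)=\template$. Once this bijection between feasible sets is in place, the two objectives coincide term by term: the data-discrepancy term depends only on $\signal(t,\Cdot)$, the velocity regularization depends only on $\velocityfield$, and since $\diffeo_{0,0}^{\velocityfield}=\Id$ (for either group action) one has $\RegFunc_1(\template)=\RegFunc_1(\signal(0,\Cdot))$. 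Hence the two problems share the same infimum and the same minimizers.

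For the geometric group action \cref{eq:GeometricDeformation}, I would exploit the identity $\signal(t,\diffeo_{0,t}^{\velocityfield}(x))=\template(x)$, obtained by evaluating $\signal(t,\Cdot)=\template\circ(\diffeo_{0,t}^{\velocityfield})^{-1}$ at $\diffeo_{0,t}^{\velocityfield}(x)$. Differentiating in $t$ and using the flow equation \cref{eq:FlowEq} to substitute $\partial_t\diffeo_{0,t}^{\velocityfield}(x)=\velocityfield(t,\diffeo_{0,t}^{\velocityfield}(x))$ gives
\[
\partial_t\signal\bigl(t,\diffeo_{0,t}^{\velocityfield}(x)\bigr)+\bigl\langle\grad\signal\bigl(t,\diffeo_{0,t}^{\velocityfield}(x)\bigr),\,\velocityfield\bigl(t,\diffeo_{0,t}^{\velocityfield}(x)\bigr)\bigr\rangle_{\Real^n}=0.
\]
Because $\diffeo_{0,t}^{\velocityfield}$ is a bijection of $\domain$, this is exactly the transport constraint in \cref{eq:LDDMMPDEConstrained_2} at every point of $\domain$. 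The converse direction is the method of characteristics: any $\signal$ satisfying this transport equation with $\signal(0,\Cdot)=\template$ is constant along $t\mapsto\signal(t,\diffeo_{0,t}^{\velocityfield}(x))$ and is therefore uniquely identified with $\template\circ(\diffeo_{0,t}^{\velocityfield})^{-1}$.

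For the mass-preserving group action \cref{eq:MassPreservedDeform}, I would argue by duality against test functions. For any smooth compactly supported $\phi$ on $\domain$, the definition of $\signal$ combined with the change of variables $y=\diffeo_{0,t}^{\velocityfield}(x)$ yields
\[
\int_{\domain}\signal(t,y)\,\phi(y)\dint y = \int_{\domain}\template(x)\,\phi\bigl(\diffeo_{0,t}^{\velocityfield}(x)\bigr)\dint x.
\]
Differentiating in $t$ under the integral, applying \cref{eq:FlowEq}, changing variables back to $y$, and integrating by parts produces
\[
\int_{\domain}\partial_t\signal(t,y)\,\phi(y)\dint y = -\int_{\domain}\grad\cdot\bigl(\signal(t,y)\,\velocityfield(t,y)\bigr)\,\phi(y)\dint y.
\]
Since $\phi$ is arbitrary, this is the continuity constraint in \cref{eq:LDDMMPDEConstrained_mp_2}. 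The reverse implication is then supplied by uniqueness for the continuity equation with $\velocityfield\in\Xspace{2}$, which is itself a consequence of the uniqueness of the flow in \cref{thm:FlowDiffeo}.

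The delicate point is regularity: \cref{thm:FlowDiffeo} only guarantees $\diffeo_{0,t}^{\velocityfield}\in\Smooth^1_0$, which is borderline for interpreting the Jacobian $|\Diff(\diffeo_{0,t}^{\velocityfield})^{-1}|$ appearing in the mass-preserving formula and for writing $\partial_t\signal+\grad\cdot(\signal\velocityfield)=0$ in a strong sense. My approach would be to carry out the computations above in the distributional formulation, where they are valid with minimal regularity, and to read the assumption that $\RecSpace$ is ``sufficiently smooth'' as precisely the hypothesis that allows one to pass from this weak statement to the classical PDE in \cref{eq:LDDMMPDEConstrained_mp_2}. The geometric case is much cleaner, since it requires only a single differentiation of the composition $\template\circ(\diffeo_{0,t}^{\velocityfield})^{-1}$, covered directly by \cref{thm:FlowDiffeo}; the mass-preserving case is where the real technical care will be needed.
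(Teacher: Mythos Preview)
Your argument for the geometric action is essentially identical to the paper's: both write $\signal(t,\diffeo_{0,t}^{\velocityfield}(x))=\template(x)$, differentiate in $t$ using the flow equation, and invoke bijectivity of $\diffeo_{0,t}^{\velocityfield}$; the converse in both cases is the observation that $t\mapsto\signal(t,\diffeo_{0,t}^{\velocityfield})$ is constant along characteristics.

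For the mass-preserving action your route genuinely differs. The paper works pointwise: it rewrites \cref{eq:MassPreservedDeform} as $\bigl\vert\Diff(\diffeo_{0,t}^{\velocityfield})\bigr\vert\,\signal(t,\cdot)\circ\diffeo_{0,t}^{\velocityfield}=\template$ and differentiates this identity directly in $t$, which implicitly uses the Liouville formula $\partial_t\bigl\vert\Diff(\diffeo_{0,t}^{\velocityfield})\bigr\vert=\bigl(\Div\velocityfield\bigr)\circ\diffeo_{0,t}^{\velocityfield}\,\bigl\vert\Diff(\diffeo_{0,t}^{\velocityfield})\bigr\vert$ together with the chain rule on $\signal(t,\diffeo_{0,t}^{\velocityfield})$. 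You instead pair $\signal(t,\cdot)$ against a test function, push the Jacobian into the change of variables, differentiate under the integral, and integrate by parts. Your approach sidesteps explicit manipulation of $\bigl\vert\Diff(\diffeo_{0,t}^{\velocityfield})\bigr\vert$ and delivers the continuity equation in weak form directly, which is an advantage given the $\Smooth^1$ regularity supplied by \cref{thm:FlowDiffeo}; the paper's computation is shorter but tacitly assumes enough smoothness to differentiate the Jacobian determinant, which it absorbs into the standing hypothesis that $\RecSpace$ is ``sufficiently smooth''. Both arguments are correct, and your explicit attention to the regularity gap is a point the paper leaves unaddressed.
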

\begin{proof}
First we consider the geometric deformation in \cref{eq:GeometricDeformation}, \ie \cref{eq:Orbit_c1_2} reads as  
\begin{equation}\label{eq:Orbit_c2}
  \signal(t,\Cdot) = \template \circ (\diffeo_{0,t}^{\velocityfield})^{-1}  \quad\text{for $0 \leq t \leq 1$.}
\end{equation}
Obviously, $\signal(0,\Cdot) = \template$ and $\signal(1,\Cdot) = \template \circ (\gelement{0,1}{\velocityfield})^{-1}$. 
Furthermore, applying the variable transformation for \cref{eq:Orbit_c2}, we get 
\begin{equation}\label{eq:Orbit_rewritten}
  \signal\bigl(t, \gelement{0,t}{\velocityfield}\bigr) = \template \quad\text{for $0 \leq t \leq 1$.}
\end{equation}
Differentiating \cref{eq:Orbit_rewritten} \wrt time t leads to 
\[
\partial_t \signal\bigl(t, \gelement{0,t}{\velocityfield}\bigr) 
   + \bigl\langle 
         \nabla \signal\bigl(t,\gelement{0,t}{\velocityfield}\bigr), 
         \velocityfield\bigl(t,\gelement{0,t}{\velocityfield}\bigr) 
      \bigr\rangle_{\Real^n} = 0.
\]
Then the \ac{PDE} constraint in \cref{eq:LDDMMPDEConstrained_2} is obtained. 
Hence a solution to \cref{eq:VarReg_LDDMM_2} generates a solution to \cref{eq:LDDMMPDEConstrained_2}.  

We now consider the reverse implication, \ie demonstrate that a solution to \cref{eq:LDDMMPDEConstrained_2} 
also solves  \cref{eq:VarReg_LDDMM_2}. Suppose that $\signal$ and $\velocityfield$ 
solve \cref{eq:LDDMMPDEConstrained_2}. 
Define the diffeomorphism $\diffeoother_t$ that solves the \ac{ODE} \cref{eq:FlowEq} with the above given $\velocityfield$.  
Since $\signal$ satisfies \ac{PDE} constraint in \cref{eq:LDDMMPDEConstrained_2}, 
considering $t \mapsto \signal\bigl(t, \diffeoother_t\bigr)$, we have 
\[  
\frac{d}{dt} \signal(t, \diffeoother_t) =  \partial_t \signal(t, \diffeoother_t) 
      + \bigl\langle \grad \signal(t,\diffeoother_t), \velocityfield(t,\diffeoother_t) \bigr\rangle_{\Real^n} = 0.
\]
Hence,  $t \mapsto \signal\bigl(t, \diffeoother_t\bigr)$ is constant so in particular we have  
\[  \signal\bigl(t, \diffeoother_t\bigr) \equiv \signal\bigl(0, \diffeoother_0\bigr) = \signal(0, \Cdot). \]
Let $\signal(0, \Cdot)$ be the template $\template$ and $\diffeoother_t$ be $\gelement{0,t}{\velocityfield}$. 
Then $\signal\bigl(t, \Cdot\bigr) = \template \circ (\diffeo_{0,t}^{\velocityfield})^{-1}$.
Hence a solution to \cref{eq:LDDMMPDEConstrained_2} also produces a solution to \cref{eq:VarReg_LDDMM_2}.

Using a mass-preserving deformation \cref{eq:MassPreservedDeform} as group action in \cref{eq:Orbit_c1_2} results in 
\begin{equation}\label{eq:Orbit_cMP}
  \signal(t,\Cdot) = \bigl\vert \Diff\bigl((\diffeo_{0,t}^{\velocityfield})^{-1}\bigr) \bigr\vert \template \circ (\diffeo_{0,t}^{\velocityfield})^{-1} \quad\text{for $0 \leq t \leq 1$.}
\end{equation}
We then get that $\signal(0,\Cdot) = \template$ and 
$\signal(1,\Cdot) = \bigl\vert \Diff\bigl((\gelement{0,1}{\velocityfield})^{-1}\bigr) \bigr\vert \,\template \circ (\gelement{0,1}{\velocityfield})^{-1}$.  
The symmetry of the mass-preserving property furthermore yields 
\begin{equation}\label{eq:PDEConstrained_rewritten_2}
  \bigl\vert \Diff(\gelement{0,t}{\velocityfield}) \bigr\vert \,\signal(t, \Cdot) \circ \gelement{0,t}{\velocityfield}  
    = \template \quad\text{for $0 \leq t \leq 1$.}
\end{equation}
Finally, differentiating \cref{eq:PDEConstrained_rewritten_2} \wrt $t$ leads to 
the constraint in \cref{eq:LDDMMPDEConstrained_mp_2}. 
Hence, a minimizer of \cref{eq:VarReg_LDDMM_2} with the group action given by \cref{eq:MassPreservedDeform} 
is also a minimizer of \cref{eq:LDDMMPDEConstrained_mp_2}.  Similar to the case of geometric deformation, 
it is not difficult to prove the reverse implication. 
\end{proof}

The above equivalent formulation makes it easier for us to compare our proposed approach 
against \ac{PDE} based ones, such as those based on optical flow \cite{BuDiSch18}. 
More details are provided in \cref{sec:LDDMM_based_model_related}.

\subsection{Comparison with existing approaches}\label{sec:LDDMM_based_model_related}

In this section, the mathematical comparison will be made among the proposed model \cref{eq:VarReg_LDDMM_2} 
and several existing approaches (\ie optical flow based model, diffeomorphic motion models).

\subsubsection{Comparison to optical flow based model} \label{sec:optical_flow}

Recently, an optical flow based variational model was proposed 
for joint motion estimation and image reconstruction in spatiotemporal 
imaging, which is called joint \ac{TV}-\ac{TV} optical flow model in \cite{BuDiSch18}. 
The approach is formulated as a \ac{PDE}-constrained optimal control problem, 
so we can compare it to our approach using the reformulation 
in  \cref{eq:LDDMMPDEConstrained_2}.

Since the optical flow based model is set up in terms of the brightness constancy equation, 
this points to using the geometric deformation in \cref{eq:GeometricDeformation} 
as a group action in \cref{eq:Orbit_c1_2}, i.e., we assume \cref{eq:Orbit_c2} holds.  
The optical flow based approach reads as 
\begin{equation}\label{eq:OptFlowPDEConstrained}
\begin{split}
 &\min_{\substack{\signal(t,\Cdot) \in \RecSpace \\ \velocityfield(t,\Cdot) \in BV(\Omega)}} \int_{0}^{1} \left[\DataDisc\Bigl( \ForwardOp_t\bigl(\signal(t,\Cdot)\bigr), \data(t,\Cdot) \Bigr)  
  + \mu_1 \RegFunc_1\bigl(\signal(t,\Cdot)\bigr) +  \mu_2\bigl\Vert \velocityfield(t, \Cdot) \bigr\Vert_{BV}\right]  \dint t \\
 & \quad\ \ \ \text{s.t. $\partial_t \signal(t, \Cdot) + \bigl\langle \grad \signal(t,\Cdot), \velocityfield(t,\Cdot) \bigr\rangle_{\Real^n} = 0$,}
  \end{split}
\end{equation}  
where $\|\cdot\|_{BV}$ is the \ac{TV} semi-norm in the space of functions with bounded variation (BV)
\[
BV(\Omega) := \left\{u\in L^1(\Omega): \|u\|_{BV} = \sup_{\varphi \in \Smooth^1_0(\domain, \Real^n), \|\varphi\|_{\infty} \leq 1} \int_{\Omega} u\Div \varphi \dint x < \infty\right\}.
\]
Note that $\bigl\Vert \velocityfield(t, \Cdot) \bigr\Vert_{BV}$ denotes the sum of the \ac{TV} semi-norm of all the elements in $\velocityfield(t, \Cdot)$ \cite{AuKo02}.  

It is easy to see that the constraints in \cref{eq:LDDMMPDEConstrained_2} and \cref{eq:OptFlowPDEConstrained} 
are equivalent. Hence, the geometric deformation is equivalent to using an the optical flow constraint 
or the brightness constancy equation \cite{BuDiSch18}.

By comparison, the primary distinction between \cref{eq:LDDMMPDEConstrained_2} 
and \cref{eq:OptFlowPDEConstrained} relates to the selection of the regularization 
term \wrt vector field $\velocityfield(t, \cdot)$. In model \cref{eq:OptFlowPDEConstrained}, 
its selection is \ac{TV} semi-norm. Hence, the space of vector fields is included 
in $BV(\Omega)$, which allows for a vector field that is a piecewise-constant vector-valued function 
distributed on $\Omega$. By contrast in model \cref{eq:LDDMMPDEConstrained_2}, 
the space of vector fields is included in an admissible Hilbert space. Hence, 
the vector field is a sufficiently smooth vector-valued function distributed on $\Omega$. 
This guarantees an elastic diffeomorphic deformation, which is close to the physical 
mechanism to some extent \cite{GiRuBu12,BuMoRu13}.

In addition to the above, both approaches also differ in the selection of regularization 
term $\RegFunc_1$. In \cref{eq:LDDMMPDEConstrained_2} one only poses restriction on 
the initial image $\signal(0, \Cdot)$, whereas in \cref{eq:OptFlowPDEConstrained} the whole 
time trajectory $t \mapsto \signal(t, \Cdot)$ is regularized. Hence, \cref{eq:LDDMMPDEConstrained_2} 
has a simpler structure which is also beneficial in implementation. 

\subsubsection{Compared with diffeomorphic motion models} \label{sec:diff_motion_model}

To characterize the optimality conditions for \cref{eq:VarReg_LDDMM_2} we introduce the 
notation
\begin{equation}\label{eq:LDDMM_match_short}
\DataDisc_{\data_t}\bigl(\signal\bigr)  :=  \DataDisc\Bigl( \ForwardOp_t\bigl(\signal\bigr), \data(t,\Cdot) \Bigr)
\end{equation}
for $\signal \in \RecSpace$ with given $\data(t,\Cdot) \in \DataSpace$. 
By \cref{thm:energy_functional_derivative_2}  in \cref{sec:Optimality_conditions}, 
the optimal velocity field in \cref{eq:VarReg_LDDMM_2} satisfies 
\begin{equation}\label{eq:optimal_velocity_field}
\velocityfield(t,\Cdot) =  
   \frac{1}{2\mu_2 (1-t)}  \int_{t}^{1}\Koperator\Bigl( \nabla (\diffeo_{0,t}^{\velocityfield} . \template) \bigl\vert \Diff(\gelement{t,\tau}{\velocityfield}) \bigr\vert \grad\DataDisc_{g_{\tau}} (\diffeo_{0,\tau}^{\velocityfield} . \template) ( \gelement{t,\tau}{\velocityfield})  \Bigr) \dint \tau 
  \end{equation}
for $0 \leq t < 1$. The above optimal velocity field can be seen as the average of 
the integrand \wrt the time integral. The $\velocityfield(1, \Cdot)$ is well-defined at $t=1$ as
\begin{equation}\label{eq:opt_vel_field_t1}
\velocityfield(1,\cdot) = \frac{1}{2\mu_2}\Koperator\bigl( \nabla (\diffeo_{0,1}^{\velocityfield} . \template)  \grad\DataDisc_{g_1} (\diffeo_{0,1}^{\velocityfield} . \template ) \bigr). 
\end{equation}
By \cref{eq:Energy_functional_gradient_template_2} and \cref{eq:OptCond_2} we get at $t=0$ that 
\begin{equation}\label{eq:opt_vel_field_t0}
\velocityfield(0,\cdot) =  \frac{1}{2\mu_2} \Koperator\biggl( \grad \template \int_{0}^{1} \bigl\vert \Diff(\gelement{0,t}{\velocityfield}) \bigr\vert \grad\DataDisc_{g_t} (\diffeo_{0,t}^{\velocityfield} . \template ) ( \gelement{0, t}{\velocityfield} )  \dint t \biggr).
\end{equation}
Note that the optimal velocity field in \cref{eq:VarReg_LDDMM_2} is \emph{averagely distributed \wrt time $t$}, 
and also \emph{non-vanishing neither on the initial nor on the end time point}. 

A diffeomorphic motion model was proposed for 4D \ac{CT} image reconstruction 
in \cite{HiSzWaSaJo12} that is based on the \ac{LDDMM} growth model \cite{GrMi07}. 
The related time-continuous model reads as 
\begin{equation}\label{eq:DiffeoPDEConstrained}
\begin{split}
 &\min_{\substack{\template \in \RecSpace \\ \velocityfield \in \Xspace{2}}} \int_{0}^{1} \Bigl[ \DataDisc\bigl( \ForwardOp_t(\gelement{0,t}{\velocityfield} . \template ), \data(t,\Cdot) \bigr)  
  +   \mu_2\bigl\Vert \velocityfield(t,\cdot) \bigr\Vert^2_{\Vspace} \Bigr]  \dint t \\
 & \quad\,\, \text{s.t.  $\diffeo_{0,t}^{\velocityfield}$ solves ODE \cref{eq:FlowEq}.}
  \end{split}
\end{equation}   
Compared to \cref{eq:VarReg_LDDMM_2}, the above approach 
neglects the regularization term $\RegFunc_1$ absolutely. 
Another difference relates to the selection on the shape regularization $\RegFunc_2$. 
In \cref{eq:DiffeoPDEConstrained}, the shape regularization is a uniformly weighted term 
on $\bigl\Vert \velocityfield(t,\cdot) \bigr\Vert^2_{\Vspace}$. 
In contrast, that is a non-uniformly weighted term in \cref{eq:VarReg_LDDMM_2} 
(see \cref{eq:VarReg_LDDMM_2_equiv} for more clear), which fulfils more weights on the previous time.

\begin{remark}\label{rem:explain_more_reg}
Note that in \cref{eq:VarReg_LDDMM_2}, we regularize the 
velocity field more at the beginning, and which is relevant because the template is selected at the 
initial time, and the more beginning of the velocity field, the more influence on the whole geodesic trajectory.   
\end{remark}

For further comparison, by \cref{thm:energy_functional_derivative_2}, it is easy to see that 
the $\Xspace{2}$-norm minimizer of  \cref{eq:DiffeoPDEConstrained} \wrt variations of the velocity field satisfies
\begin{equation}\label{eq:DiffeoPDEConstrainedMin1}
\velocityfield(t,\cdot) = \frac{1}{2\mu_2} \int_{t}^{1}  \Koperator\Bigl(\grad (\gelement{0,t}{\velocityfield} . \template )  \bigl\vert \Diff(\gelement{t,\tau}{\velocityfield}) \bigr\vert  \grad \DataDisc_{g_{\tau}}( \gelement{0,\tau}{\velocityfield} .  \template )(\gelement{t,\tau}{\velocityfield}) \Bigr) \dint \tau 
\end{equation}
for $0\leq t \leq 1$ and $\mu_2 > 0$. In addition, the minimizer \wrt variations of the template satisfies
\begin{equation}\label{eq:DiffeoPDEConstrainedMin2}
\int_0^1 \bigl\vert \Diff(\gelement{0,t}{\velocityfield}) \bigr\vert \grad \DataDisc_{g_t}( \gelement{0,t}{\velocityfield} . \template )(\gelement{0,t}{\velocityfield}) \dint t = 0.
\end{equation}
Combining \cref{eq:DiffeoPDEConstrainedMin1} and \cref{eq:DiffeoPDEConstrainedMin2}, we immediately have 
\[
\velocityfield(0,\cdot) = \velocityfield(1,\cdot) = 0.
\]
Clearly, the optimal velocity field that minimizes \cref{eq:DiffeoPDEConstrained} 
is \emph{vanishing both on the initial and end time points}. It is not difficult to see 
from \cref{eq:DiffeoPDEConstrainedMin1} that the optimal velocity field is {\it  not averagely distributed \wrt time $t$}.

On the other hand, the following model can be seen as another diffeomorphic motion model. 
\begin{equation}\label{eq:VarReg_LDDMM}
\begin{split}
 &\min_{\substack{\template \in \RecSpace \\ \velocityfield \in \Xspace{2}}} \int_{0}^{1} \left[\DataDisc\bigl( \ForwardOp_t(\diffeo_{0,t}^{\velocityfield} . \template), \data(t,\Cdot) \bigr)  
  + \mu_1 \RegFunc_1(\diffeo_{0,t}^{\velocityfield} . \template) +  \mu_2\int_{0}^{t} \bigl\Vert \velocityfield(\tau,\cdot) \bigr\Vert^2_{\Vspace}\dint \tau \right] \dint t \\
 & \quad\,\, \text{s.t.  $\diffeo_{0,t}^{\velocityfield}$ solves \ac{ODE} \cref{eq:FlowEq}.}
  \end{split}
\end{equation}  
We consider the regularization term on the whole $\diffeo_t . \template$ instead of only on $\template$ in \cref{eq:VarReg_2}.  
By \cref{thm:energy_functional_derivative_2},  its optimal velocity field for $0 \leq t < 1$ satisfies
\begin{equation}\label{eq:OptCond_diffeoModel1}
\velocityfield(t, \Cdot) = \frac{1}{2\mu_2 (1-t)}\int_{t}^{1} \Koperator\Bigl( \nabla (\diffeo_{0,t}^{\velocityfield} . \template)  \bigl\vert \Diff(\gelement{t,\tau}{\velocityfield}) \bigr\vert \grad\MatchingFunctionalX_{g_{\tau}} (\diffeo_{0,\tau}^{\velocityfield} . \template)( \gelement{t,\tau}{\velocityfield}) \Bigr) \dint \tau 
\end{equation}
where with fixed $\data(t,\Cdot) \in \DataSpace$, 
\begin{equation*}
\MatchingFunctionalX_{\data_t} \bigl(\signal\bigr) := \DataDisc\Bigl(\ForwardOp_t\bigl(\signal\bigr), \data(t,\Cdot) \Bigr) + \mu_1 \RegFunc_1\bigl(\signal\bigr)
\end{equation*}
for $\signal \in \RecSpace$. 
And its optimal template satisfies
\begin{equation}\label{eq:OptCond_diffeoModel2}
\int_0^1 \bigl\vert \Diff(\gelement{0,t}{\velocityfield}) \bigr\vert \grad\MatchingFunctionalX_{g_t} \bigl(\diffeo_{0,t}^{\velocityfield} . \template\bigr)\bigl( \gelement{0,t}{\velocityfield}\bigr) \dint t = 0. 
\end{equation}
Evidently, the above optimal velocity field is also a time average of the integrand.
Even though $\velocityfield(1, \Cdot)$ is well-defined at $t=1$ as
\begin{equation*}
\velocityfield(1,\cdot) 
= \frac{1}{2\mu_2}\Koperator\bigl( \grad (\diffeo_{0,1}^{\velocityfield} . \template)  \grad\MatchingFunctionalX_{g_1} (\diffeo_{0,1}^{\velocityfield} . \template ) \bigr), 
\end{equation*}
by \cref{eq:OptCond_diffeoModel1} and \cref{eq:OptCond_diffeoModel2} at $t=0$ we have 
\[
\velocityfield(0,\cdot) = 0.
\] 
Hence, the optimal velocity field in \cref{eq:VarReg_LDDMM} is \emph{averagely distributed \wrt time $t$}, 
but \emph{vanishing on the initial time point}.

The above analysis points to several advantages that comes with using \cref{eq:VarReg_LDDMM_2} 
over alternative approaches.

\section{Numerical implementation}\label{sec:computed_method}

Let us first present the time-discretized version of the proposed model. 

\subsection{Time-discretized version}\label{sec:time_discretized_version}

Basically, the data set is gained by gating method in the manner of uniformly discretized time, \ie based on a uniform 
partition of $[0,1]$ as $\{t_i\}_{i=0}^{N}$, and $t_i = i/N$ for $0 \leq i \leq 1$. We refer to this as the gating grid and  
the time-discretized version of the general spatiotemporal inverse problem in \cref{eq:InvProb} reads as 
\begin{equation}\label{eq:InvProb_discrete}  
     \data(t_i,\Cdot) = \ForwardOp_{t_i}\bigl(\signal(t_i,\Cdot)\bigr) + \noisedata(t_i,\Cdot). 
\end{equation}
Thus the time-discretized version of \cref{eq:VarReg_LDDMM_2} becomes 
\begin{equation}\label{eq:time-discretized_VarReg_LDDMM_2}
\begin{split}
 &\min_{\substack{\template \in \RecSpace \\ \velocityfield \in \Xspace{2}}} \frac{1}{N}\sum_{i=1}^{N} \left[\DataDisc\bigl( \ForwardOp_{t_i}(\diffeo_{0,t_i}^{\velocityfield} . \template), \data(t_i,\Cdot) \bigr)  + 
   \mu_2\int_{0}^{t_i} \bigl\Vert \velocityfield(\tau,\cdot) \bigr\Vert^2_{\Vspace}\dint \tau \right]  + \mu_1 \RegFunc_1(\template)  \\
 & \quad\,\, \text{s.t.  $\diffeo_{0,t}^{\velocityfield}$ solves \ac{ODE} \cref{eq:FlowEq}.}
  \end{split}
\end{equation}  
\begin{remark}\label{re:otherchoose}
The time-discretized version \cref{eq:time-discretized_VarReg_LDDMM_2} can be also written such 
that the image in the first gate is the template: 
\begin{equation*}
\begin{split}
 &\min_{\substack{\template \in \RecSpace \\ \velocityfield \in \Xspace{2}}} \frac{1}{N+1}\sum_{i=0}^{N} \left[\DataDisc\bigl( \ForwardOp_{t_i}(\diffeo_{0,t_i}^{\velocityfield} . \template), \data(t_i,\Cdot) \bigr)  + 
   \mu_2\int_{0}^{t_i} \bigl\Vert \velocityfield(\tau,\cdot) \bigr\Vert^2_{\Vspace}\dint \tau \right]  + \mu_1 \RegFunc_1(\template)  \\
 & \quad\,\, \text{s.t.  $\diffeo_{0,t}^{\velocityfield}$ solves \ac{ODE} \cref{eq:FlowEq}.}
  \end{split}
\end{equation*}  
\end{remark}

Since \cref{eq:time-discretized_VarReg_LDDMM_2} contains highly coupled arguments, 
it is difficult to jointly solve for the template $\template$ and the velocity field $\velocityfield$. 
A relaxed method is to compute $\template$ and $\velocityfield$ in an intertwined manner.
More precisely, a fixed velocity field $\velocityfield$ yields the flow of 
diffeomorphisms $\diffeo_t$ through \ac{ODE} \cref{eq:FlowEq}. 
Hence, the spatiotemporal reconstruction problem \cref{eq:time-discretized_VarReg_LDDMM_2} reduces 
to the following modified static image reconstruction problem:
\begin{equation}\label{eq:VarReg_LDDMM_template_time_discrete2}
 \min_{\template \in \RecSpace}\frac{1}{N}\sum_{i=1}^{N}\DataDisc\bigl( \ForwardOp_{t_i}(\diffeo_{0,t_i}^{\velocityfield} . \template), \data(t_i,\Cdot) \bigr) + \mu_1 \RegFunc_1(\template).
 \end{equation} 
Conversely, if the template $\template$ is fixed then \cref{eq:time-discretized_VarReg_LDDMM_2} 
boils down to a sequentially indirect image registration problem where we seek the velocity 
field $\velocityfield$ from time-series data that are indirect observations of the target:
\begin{equation}\label{eq:VarReg_LDDMM_deformation_time_discrete2}
\begin{split}
 & \min_{\velocityfield \in \Xspace{2}} \frac{1}{N}\sum_{i=1}^{N}\left[ \DataDisc\bigl( \ForwardOp_{t_i}(\diffeo_{0,t_i}^{\velocityfield} . \template), \data(t_i,\Cdot) \bigr)  + 
   \mu_2\int_{0}^{t_i} \bigl\Vert \velocityfield(\tau,\cdot) \bigr\Vert^2_{\Vspace}\dint \tau \right]  \\
 &\quad\,\,  \text{s.t.  $\diffeo_{0,t}^{\velocityfield}$ solves ODE \cref{eq:FlowEq}.}
  \end{split}
  \end{equation}
We solve \cref{eq:time-discretized_VarReg_LDDMM_2} by alternately solving 
for \cref{eq:VarReg_LDDMM_template_time_discrete2} and \cref{eq:VarReg_LDDMM_deformation_time_discrete2}. 

If the data fidelity term is designed 
as the squared $\LpSpace^2$-norm in \cref{eq:DataMatching_L2} 
and the spatial regularization is selected as the \ac{TV} functional in \cref{eq:TVReg}, then  
\cref{eq:time-discretized_VarReg_LDDMM_2} is written as  
\begin{equation}\label{eq:VarReg_LDDMM_time-discretized}
\begin{split}
 &\min_{\substack{\template \in \RecSpace \\ \velocityfield \in \Xspace{2}}} \frac{1}{N}\sum_{i=1}^{N}\left[ \|\ForwardOp_{t_i}\bigl(\diffeo_{0,t_i}^{\velocityfield} . \template\bigr) - \data(t_i,\Cdot)\|_2^2  + 
   \mu_2\int_{0}^{t_i} \bigl\Vert \velocityfield(\tau,\cdot) \bigr\Vert^2_{\Vspace}\dint \tau \right]  + \mu_1 \|\nabla \template\| \\
 & \quad\,\, \text{s.t.  $\diffeo_{0,t}^{\velocityfield}$ solves ODE \cref{eq:FlowEq}.}
  \end{split}
\end{equation}  
Note that one may of course choose other data fidelity and spatial regularization terms as indicated in \cref{sec:VariatRegCT}. 
Correspondingly, \cref{eq:VarReg_LDDMM_template_time_discrete2} becomes
\begin{equation}\label{eq:VarReg_LDDMM_template_time_discrete}
 \min_{\template \in \RecSpace}\GoalFunctionalV_{\velocityfield}(\template)  := \frac{1}{N}\sum_{i=1}^{N}\|\ForwardOp_{t_i}\bigl(\diffeo_{0,t_i}^{\velocityfield} . \template\bigr) - \data(t_i,\Cdot)\|_2^2 + \mu_1 \|\nabla \template\|,
 \end{equation} 
and \cref{eq:VarReg_LDDMM_deformation_time_discrete2} reads as
\begin{equation}\label{eq:VarReg_LDDMM_deformation_time_discrete}
\begin{split}
 & \min_{\velocityfield \in \Xspace{2}} \GoalFunctionalV_ {\template}(\velocityfield)  := \frac{1}{N} \sum_{i=1}^{N}\left[ \|\ForwardOp_{t_i}\bigl(\diffeo_{0,t_i}^{\velocityfield} . \template\bigr) - \data(t_i,\Cdot)\|_2^2  + 
   \mu_2\int_{0}^{t_i} \bigl\Vert \velocityfield(\tau,\cdot) \bigr\Vert^2_{\Vspace}\dint \tau \right]  \\
 &\quad\,\,  \text{s.t.  $\diffeo_{0,t}^{\velocityfield}$ solves the ODE \cref{eq:FlowEq},}
  \end{split}
  \end{equation}
where $\GoalFunctionalV_{\velocityfield} \colon \RecSpace \to \Real$ and 
$\GoalFunctionalV_ {\template} \colon \Xspace{2} \to \Real$. 
We then solve \cref{eq:VarReg_LDDMM_time-discretized} by alternately solving 
for \cref{eq:VarReg_LDDMM_template_time_discrete} and \cref{eq:VarReg_LDDMM_deformation_time_discrete}, \ie
\begin{equation}\label{eq:Alternating}
\begin{cases}
  \template^{k+1} := \text{ solution to \cref{eq:VarReg_LDDMM_template_time_discrete} with $\velocityfield = \velocityfield^k$,} \\[0.5em]
  \velocityfield^{k+1} := \text{ solution to \cref{eq:VarReg_LDDMM_deformation_time_discrete} with $\template = \template^{k+1}$.} 
\end{cases}
\end{equation}

\subsection{Template reconstruction}\label{subsec:LinearizedSplitBregman}

Just as in \cite{BlNaRa12,BrPaOeKuKa12,LiZhZhGa15,BuDiSch18}, the temporal evolution operator in 
spatiotemporal imaging can be given by the geometric deformation as   
$\diffeo_{0,t} . \template := \template \circ \diffeo_{t,0}^{-1} = \template \circ \diffeo_{0,t}$. 
We will henceforth consider this setting. 
We here describe the steps underlying the implementation for solving the static image 
reconstruction problem in \cref{eq:VarReg_LDDMM_template_time_discrete}. The resulting 
gradient descent scheme is summarized in \cref{algo:GDSB_4DCT}.
The resulting template, which is obtained assuming a given velocity field, 
gives the images in all gates by deforming it under the flow of diffeomorphisms. 

The optimization problem in \cref{eq:VarReg_LDDMM_time-discretized} is a 
non-smooth \ac{TV}-$\ell_2$ minimization. We modify this non-smooth problem 
into a smooth one as
\begin{equation}\label{eq:template_recon_CT}
\min\limits_{\template\in \RecSpace}\frac{1}{N}\sum_{i=1}^{N}\bigl\Vert \ForwardOp_{t_i}\bigl(\template \circ \diffeo_{t_i,0}^{\velocityfield}\bigr) - \data(t_i,\Cdot) \bigr\Vert^2_{2} + \mu_1 \int_{\domain}\vert \nabla \template(x) \vert_{1, \epsilon}\dint x,
\end{equation}
where $\vert \nabla \template(x) \vert_{1, \epsilon} = \sqrt{\sum_i\bigl(\partial_i\template(x)\bigr)^2 + \epsilon}$ with 
$\epsilon > 0$ small, \eg $\epsilon = 10^{-12}$. This is a frequently used modification 
for \ac{TV} regularization in image reconstruction \cite{SiKaPa06,ChXu12,ChPan17}.
Then \cref{eq:template_recon_CT} can be solved by the following gradient descent scheme: 
\begin{multline}\label{eq:soc4}
\template^{k+1} = \template^k - \alpha^k\biggl(\frac{2}{N}\sum_{i=1}^{N}\bigl\vert\Diff\bigl(\diffeo_{0,t_i}^{\velocityfield}\bigr)\bigr\vert\ForwardOpAdjoint_{t_i}\Bigl(\ForwardOp_{t_i}(\template^k \circ \diffeo_{t_i,0}^{\velocityfield}) - \data(t_i,\Cdot) \Bigr)(\diffeo_{0,t_i}^{\velocityfield}) \\
+ \mu_1\grad^{\,\ast}\Bigl(\frac{\grad \template^k}{\vert \nabla \template^k \vert_{1, \epsilon}}\Bigr)\biggr)
\end{multline}
with $\alpha^k$ the stepsize for the $k$-th iteration, where $\mathcal{T}$ is assumed to be linear, 
and $\mathcal{T}^{\ast}$ denotes its adjoint operator.

Note that several convex optimization techniques for solving non-smooth problems in 
static image reconstruction, such as the algorithms 
in \cite{sidky2012convex,ChXu13,NiFe15,ChXu16,BaCo17} and references therein, can be 
used to solve \cref{eq:VarReg_LDDMM_template_time_discrete} without modification. 
Basically such kind of methods need to introduce 
more auxiliary variables or parameters than the above algorithm. To optimize the whole 
problem \cref{eq:VarReg_LDDMM_time-discretized} efficiently, we employ the 
iterative scheme \cref{eq:soc4} to solve this subproblem.

\subsubsection{Computing diffeomorphic deformations}
\label{subsubsection:CompDiffDeform}

Updating $\template^{k+1}$ requires computing diffeomorphic deformations $\diffeo_{t_i,0}^{\velocityfield}$ 
and $\diffeo_{0,t_i}^{\velocityfield}$ for $1 \leq i \leq N$. 
 
By definition, $\gelement{s,t}{\velocityfield}$ solves the flow equation 
\begin{equation}\label{eq:BasicODEInv_0_2}
\begin{cases}
  \partial_t \varphi(t,x) = \velocityfield\bigl(t,\varphi(t,x)\bigr) & \\[0.5em]
  \varphi(s,x) = x & 
 \end{cases}
 \quad\text{for $x\in \domain$ and $0\leq s, t \leq 1$,} 
\end{equation}
where $s$ is a fixed time point. Integrating \wrt time $t$ in \cref{eq:BasicODEInv_0_2} yields 
\begin{equation}\label{eq:forwarddeformation}
  \gelement{s,t}{\velocityfield} = \Id + \int_s^t \velocityfield\bigl(\tau,\gelement{s,\tau}{\velocityfield}\bigr)\dint \tau \quad\text{for $0\leq  t \leq 1$.} 
  \end{equation}
  
The time interval $[0, 1]$ is subdivided uniformly into $MN$ parts thereby 
forming a discretized time grid that is given as $\tau_j = j/(MN)$ for $j = 0, 1, \ldots, MN$. 
Evidently, $\tau_{iM} = t_i$ for $i = 0, 1, \ldots, N$, so each subinterval $[t_i, t_{i+1}]$ is 
segmented into $M$ even parts. The $M$ is named as a factor of discretized time degree.  If $M=1$, then $\tau_i = t_i$, 
namely, the discretized time grid is consistent with the gating grid. Needs to be pointed out is 
that the different subintervals of gating grid can be discretized adaptively according to the degree of motions. 

Within a short-time interval one can approximate the diffeomorphic deformation 
with linearized deformations \cite{OkChDoRaBa16}. More precisely, 
let $s = \tau_j$, $t = \tau_{j-1}$ and $\tau_{j+1}$ in \cref{eq:forwarddeformation}, then the expressions for 
small deformations $\gelement{\tau_i, \tau_{i-1}}{\velocityfield}$ and $\gelement{\tau_i, \tau_{i+1}}{\velocityfield}$ 
can be approximated by
\begin{equation}\label{eq:deformation_approx2}
\gelement{\tau_j, \tau_{j-1}}{\velocityfield} \approx  \Id - \frac{1}{MN}\velocityfield(\tau_j,\Cdot),
\end{equation}
and 
\begin{equation}\label{eq:deformation_approx1}
\gelement{\tau_j, \tau_{j+1}}{\velocityfield} \approx  \Id + \frac{1}{MN}\velocityfield(\tau_j,\Cdot).
\end{equation}
Moreover, \cref{eq:FlowRelation} implies 
that $\gelement{\tau_j, 0}{\velocityfield} = \gelement{\tau_{j-1}, 0}{\velocityfield} \circ \gelement{\tau_j, \tau_{j-1}}{\velocityfield}$, 
which combined with \cref{eq:deformation_approx2} yields 
\begin{equation}\label{eq:deformation_approx3}
\gelement{\tau_j,0}{\velocityfield} \approx  \gelement{\tau_{j-1},0}{\velocityfield} \circ \Bigl(\Id - \frac{1}{MN}\velocityfield(\tau_j,\Cdot)\Bigr)  \quad\text{for $j = 1, 2, \ldots, MN$.}
\end{equation}
Next, \cref{eq:deformation_approx3} yields the following estimate for $\template \circ \gelement{\tau_j,0}{\velocityfield}$:
\begin{equation}\label{eq:deformation_approx7}
\template \circ \gelement{\tau_j,0}{\velocityfield} \approx  \bigl(\template \circ \gelement{\tau_{j-1},0}{\velocityfield}\bigr) \circ \Bigl(\Id - \frac{1}{MN}\velocityfield(\tau_j,\Cdot)\Bigr)
\end{equation}
for $j = 1, 2, \ldots, MN$ and with $\template \circ \gelement{\tau_0,0}{\velocityfield} = \template$. 
Similarly, \cref{eq:FlowRelation} also implies   
 $\gelement{\tau_j, t_i}{\velocityfield} = \gelement{\tau_{j+1}, t_i}{\velocityfield} \circ \gelement{\tau_j, \tau_{j+1}}{\velocityfield}$
for $i \geq 1$, which combined with \cref{eq:deformation_approx1} gives the following approximation:
\begin{equation}\label{eq:deformation_approx4}
\gelement{\tau_j, t_i}{\velocityfield} \approx  \gelement{\tau_{j+1}, t_i}{\velocityfield} \circ \Bigl(\Id + \frac{1}{MN}\velocityfield(\tau_j,\Cdot)\Bigr) 
\end{equation}
for $j = iM-1, iM-2, \ldots, 0$ and with $\gelement{t_i,t_i}{\velocityfield} = \Id$.

To summarize, the deformation between two images of adjacent points of discretized time grid is approximately
represented as a linearized deformation. 

\subsubsection{Computing mass-preserving deformations}
The gradient of the data fidelity term involves the type of mass-preserving deformation in \cref{eq:MassPreservedDeform} as 
\[ \bigl\vert\Diff\bigl(\diffeo_{0,t_i}^{\velocityfield}\bigr)\bigr\vert \ForwardOpAdjoint_{t_i}\Bigl(\ForwardOp_{t_i}\bigl(\template \circ \diffeo_{t_i,0}^{\velocityfield}\bigr) - \data(t_i,\Cdot) \Bigr) \circ\gelement{0,t_i}{\velocityfield} \quad \text{for $i \geq 1$.}
\]
Starting with the Jacobian determinant, by \cref{eq:deformation_approx4} we get 
\begin{equation}\label{eq:deformation_approx5}
\bigl\vert \Diff(\gelement{\tau_j, t_i}{\velocityfield})\bigr\vert \approx \Bigl(1 + \frac{1}{MN} \Div\velocityfield(\tau_j,\Cdot)\Bigr) 
 \bigl\vert  \Diff(\gelement{\tau_{j+1},t_i}{\velocityfield})\bigr\vert  \circ \Bigl(\Id + \frac{1}{MN}\velocityfield(\tau_j,\Cdot) \Bigr) 
 \end{equation}
for $j = iM-1, iM-2, \ldots, 0$ and where $\bigl\vert \Diff(\gelement{t_i,t_i}{\velocityfield})\bigr\vert = 1$. 
Next, \cref{eq:deformation_approx4} also yields the following approximation: 
\begin{multline}\label{eq:deformation_approx6}
\ForwardOpAdjoint_{t_i}\Bigl(\ForwardOp_{t_i}\bigl(\template \circ \diffeo_{t_i,0}^{\velocityfield}\bigr) - \data(t_i,\Cdot) \Bigr) \circ\gelement{\tau_j,t_i}{\velocityfield} \\
\approx  \ForwardOpAdjoint_{t_i}\Bigl(\ForwardOp_{t_i}\bigl(\template \circ \diffeo_{t_i,0}^{\velocityfield}\bigr) - \data(t_i,\Cdot) \Bigr) \circ \gelement{\tau_{j+1},t_i}{\velocityfield}  \circ \Bigl(\Id + \frac{1}{MN}\velocityfield(\tau_j,\Cdot) \Bigr) 
\end{multline}
for $j = iM-1, iM-2, \ldots, 0$. For simplicity, let
\begin{equation}\label{eq:h_t_ti}
\eta_{\tau, t}^{\template, \velocityfield} = \bigl\vert \Diff\bigl(\gelement{\tau,t}{\velocityfield}\bigr) \bigr\vert \ForwardOpAdjoint_t\Bigl(\ForwardOp_t\bigl(\template \circ \gelement{t,0}{\velocityfield}\bigr) - \data(t,\Cdot)\Bigr) \circ \gelement{\tau,t}{\velocityfield}.
\end{equation}
Then multiplying \cref{eq:deformation_approx5} by \cref{eq:deformation_approx6}, and using \cref{eq:h_t_ti}, $\eta_{0, t_i}^{\template, \velocityfield}$ for $i \geq 1$ is computed by
\begin{equation}\label{eq:eta_t_ti_update}
 \eta_{\tau_j, t_i}^{\template,\velocityfield} \approx \Bigl(1 + \frac{1}{MN} \Div\velocityfield(\tau_j,\Cdot)\Bigr) 
 \eta_{\tau_{j+1}, t_i}^{\template,\velocityfield}  \circ \Bigl(\Id + \frac{1}{MN}\velocityfield(\tau_j,\Cdot) \Bigr)
 \end{equation}
for $j = iM-1, iM-2, \ldots, 0$ with $\eta_{t_i, t_i}^{\template, \velocityfield} = \ForwardOpAdjoint_{t_i}\Bigl(\ForwardOp_{t_i}\bigl(\template \circ \gelement{t_i,0}{\velocityfield}\bigr) - \data(t_i,\Cdot)\Bigr)$.

Based on the above derivations, the concrete implementation is given as the 
gradient descent scheme in \cref{algo:GDSB_4DCT}.

\begin{algorithm}
\caption{Gradient descent scheme for minimizing $\GoalFunctionalV_ {\velocityfield}(\template)$ in \cref{eq:VarReg_LDDMM_template_time_discrete}}
\label{algo:GDSB_4DCT}
\begin{algorithmic}[1]
\State \emph{Initialize}:
\State $k \gets 0$.
\State $t_i \gets \frac{i}{N}$ for $i = 0, 1, \ldots, N$.
\State $\tau_j \gets \frac{j}{MN}$ for $j = 0, 1, \ldots, MN$. 
\State Given $\velocityfield$.
\State $\template^k \gets \template^0$. Here $\template^0$ is a given initial template.
\State Spatial regularization parameter $\mu_1 > 0$.
\State Error tolerance $\epsilon_{\template} > 0$, stepsize $\alpha^k = \alpha > 0$, and iteration number $K_{\template} > 0$.
\State \emph{Loop}:
\State \quad Compute $\template^k \circ \gelement{\tau_j,0}{\velocityfield}$ for $1 \leq j \leq MN$ by 
\[
\template^k \circ \gelement{\tau_j,0}{\velocityfield} \gets  \bigl(\template^k \circ \gelement{\tau_{j-1},0}{\velocityfield}\bigr) \circ \Bigl(\Id - \frac{1}{MN}\velocityfield(\tau_j,\Cdot)\Bigr)
\]
\quad with $\template^k \circ \gelement{0,0}{\velocityfield} = \template^k$. 
\State \quad Update $\eta_{t_i, t_i}^{\template^k, \velocityfield}$ for $1 \leq i \leq N$ by
\[
\eta_{t_i, t_i}^{\template^k, \velocityfield} \gets \ForwardOpAdjoint_{t_i}\bigl(\ForwardOp_{t_i}(\template^k \circ \gelement{t_i,0}{\velocityfield}) - \data(t_i,\Cdot)\bigr)
\].
\State \quad Compute $\eta_{0, t_i}^{\template^k, \velocityfield}$ for $1 \leq i \leq N$ by
\[
\eta_{\tau_j, t_i}^{\template^k, \velocityfield}    
  \gets  \Bigl(1 + \frac{1}{MN} \Div\velocityfield(\tau_j,\Cdot)\Bigr)  \eta_{\tau_{j+1}, t_i}^{\template^k, \velocityfield} \circ \Bigl(\Id + \frac{1}{MN}\velocityfield(\tau_j,\Cdot) \Bigr) 
 \]
\quad for $j = iM-1, iM-2, \ldots, 0$.
\State \quad Evaluate $\template^{k+1}$ by
\[
\template^{k+1} \gets \template^k - \alpha\biggl(\frac{2}{N}\sum_{i=1}^{N}\eta_{0, t_i}^{\template^k,\velocityfield} 
+ \mu_1\grad^{\,\ast}\Bigl(\frac{\grad \template^k}{\vert \nabla \template^k \vert_{1, \epsilon}}\Bigr)\biggr).
\]
\State \quad \textbf{If} $\bigr\vert \template^{k+1} - \template^k \bigr\vert > \epsilon_{\template}$ and $k<K_{\template}$, then $k \gets k+1$, \textbf{goto} \emph{Loop}.
\State \textbf{Output} $\template^{k+1}$.
\end{algorithmic}
\end{algorithm}

\subsection{Velocity field estimation}
\label{subsec:GradientDescent_velocithfield}

The aim here is to provide an algorithm for solving \cref{eq:VarReg_LDDMM_deformation_time_discrete}, 
which is sequentially indirect image registration. 
We will use a gradient descent scheme of the form 
\begin{equation}\label{eq:gradientflow}
  \velocityfield^{k+1} =  \velocityfield^k -  \beta^k \grad \GoalFunctionalV_{\template}(\velocityfield^k).
\end{equation}
Here $\GoalFunctionalV_{\template} \colon \Xspace{2} \to \Real$ is the objective functional in \cref{eq:VarReg_LDDMM_deformation_time_discrete}, 
$\beta^k$ is the step-size in the $k$-th iteration, and $\grad \GoalFunctionalV_{\template}(\velocityfield)$ is 
calculated by \cref{eq:Energy_functional_time_discretized_gradient}.

The central issue is the computation of $\grad \GoalFunctionalV_{\template}$ and 
the final algorithm for the gradient descent scheme \cref{eq:gradientflow} is 
given in \cref{algo:GradientDescentAlgorithmForFiniteFunctional_0}.

\subsubsection{Computing $\grad \GoalFunctionalV_ {\template}$}\label{subsec:GradientDescent}

Let us first introduce notations:
\begin{equation}\label{eq:Middle_func_deriv_1}
h_{\tau, t}^{\template,\velocityfield} := \begin{cases} 
    \eta_{\tau, t}^{\template, \velocityfield}, \quad 0 \leq \tau \leq t \leq 1,    & \\[0.5em]
    0, \quad t < \tau, &  
   \end{cases} 
\end{equation}
and
\begin{equation}\label{eq:Middle_func_deriv_2}
\velocityfield_{\tau, t} := \begin{cases} 
   \velocityfield(\tau,\Cdot), \quad 0 \leq \tau \leq t \leq 1,    & \\[0.5em]
   0, \quad t < \tau. &  
   \end{cases}
\end{equation}
\Cref{thm:energy_functional_time_discretized_derivative} gives an expression 
for $\grad \GoalFunctionalV_{\template}$ where the kernel 
function $\kernel \colon \domain \times \domain \to \Matrix_{+}^{n \times n}$ is evaluated on 
points that do not move as iteration proceeds. By choosing a translation invariant kernel and points on a regular 
grid in $\domain$, we can use \acs{FFT}-based convolution scheme 
to efficiently evaluate the velocity field at each iteration. 
This is computationally more feasible than letting the kernel depend on points that move in time as in the 
shooting method \cite{MiTrYo06,ViRiRuCo12}.

In what follows, we write out the explicit derivations for 
computing $\grad\GoalFunctionalV_ {\template}(\velocityfield)$. 
As derived in \cref{subsubsection:CompDiffDeform}, $\template \circ \gelement{\tau_j,0}{\velocityfield}$ 
can be approximated by \cref{eq:deformation_approx7}. The key step is now to 
update $h_{\tau_j, t_i}^{\template,\velocityfield}$ for $\{i : t_i \geq \tau_j\}$ 
in \cref{eq:Energy_functional_time_discretized_gradient}. We know, by \cref{eq:Middle_func_deriv_1}, 
\begin{equation}\label{eq:h_eqv_eta}
h_{\tau_j, t_i}^{\template,\velocityfield} = \eta_{\tau_j, t_i}^{\template,\velocityfield} \quad \text{for $t_i \geq \tau_j$}.
\end{equation}
Using \cref{eq:eta_t_ti_update} for $1 \leq i \leq N$ allows us to compute $h_{\tau_j, t_i}^{\template,\velocityfield}$ by 
\begin{equation}\label{eq:h_t_ti_update}
 h_{\tau_j, t_i}^{\template,\velocityfield} \approx \Bigl(1 + \frac{1}{MN} \Div\velocityfield(\tau_j,\Cdot)\Bigr) 
 h_{\tau_{j+1}, t_i}^{\template,\velocityfield}  \circ \Bigl(\Id + \frac{1}{MN}\velocityfield(\tau_j,\Cdot) \Bigr)
 \end{equation}
 for $j = iM-1, iM-2, \ldots, 0$ and with 
 $h_{t_i, t_i}^{\template, \velocityfield} = \ForwardOpAdjoint_{t_i}\Bigl(\ForwardOp_{t_i}(\template \circ \gelement{t_i,0}{\velocityfield}) - \data(t_i,\Cdot)\Bigr)$.
Hence, at $t=\tau_j$, by \cref{eq:Energy_functional_time_discretized_gradient} we get 
 \begin{multline}\label{eq:algo_step3} 
 \grad\GoalFunctionalV_ {\template}(\velocityfield)(\tau_j,x)   \\
     = - \frac{2}{N} \sum_{\{i\geq 1 : t_i \geq \tau_j\}} \biggl[\int_{\domain}\kernel(x,y)\nabla \bigl(\template \circ \gelement{\tau_j,0}{\velocityfield}\bigr)(y) h_{\tau_j, t_i}^{\template,\velocityfield}(y) \dint y 
 - \mu_2\velocityfield(\tau_j, x) \biggr]
  \end{multline}
for $0\leq j \leq MN$ and $x\in \domain$. In particular, for $j = MN$ (\ie $\tau_j = 1$) we have 
 \begin{equation*}
 \grad\GoalFunctionalV_ {\template}(\velocityfield)(1, x)   
     = - \frac{2}{N} \biggl[\int_{\domain}\kernel(x,y)\nabla \bigl(\template \circ \gelement{1, 0}{\velocityfield}\bigr)(y) h_{1, 1}^{\template,\velocityfield}(y) \dint y  - \mu_2\velocityfield(1, x) \biggr].
  \end{equation*}

\begin{remark}\label{rem:consistent}
It is easy to verify that the optimal solution of the time-discretized version of the proposed model 
is consistent with that of the time-continuous one. This is however not the case for the diffeomorphic 
motion model in \cite{HiSzWaSaJo12}. As an example, at $\tau_j =1$, the optimal velocity field of the    
time-discretized problem in \cite{HiSzWaSaJo12} satisfies
\[
\velocityfield(1, x) = \frac{1}{\mu_2}\int_{\domain}\kernel(x,y)\nabla \bigl(\template \circ \gelement{1, 0}{\velocityfield}\bigr)(y) h_{1, 1}^{\template,\velocityfield}(y) \dint y.
\]
However, as derived in \cref{sec:diff_motion_model},  the optimal velocity field at $t =1$ of its  
time-continuous problem satisfies
$\velocityfield(1, x)  = 0$.
This obviously causes inconsistencies and our consistent approach is an advantage compared to the approach in \cite{HiSzWaSaJo12}.  
\end{remark}  

Finally, \cref{algo:GradientDescentAlgorithmForFiniteFunctional_0} outlines the procedure for computing the 
gradient descent scheme \cref{eq:gradientflow} that makes use of the above derivations.
\begin{algorithm}
\caption{Gradient descent scheme for minimizing $\GoalFunctionalV_ {\template}(\velocityfield)$ in \cref{eq:VarReg_LDDMM_deformation_time_discrete}}
\label{algo:GradientDescentAlgorithmForFiniteFunctional_0}
\begin{algorithmic}[1]
\State \emph{Initialize}:
\State $k \gets 0$.
\State $t_i \gets \frac{i}{N}$ for $i = 0, 1, \ldots, N$. 
\State $\tau_j \gets \frac{j}{MN}$ for $j = 0, 1, \ldots, MN$. 
\State Fixed $\template$.
\State $\velocityfield^k(\tau_i) \gets \velocityfield^0(\tau_i)$, where $\velocityfield^0$ is a given initial velocity field.
\State Fixed kernel function $\kernel(\Cdot,\Cdot)$.
\State Shape regularization parameter $\mu_2 > 0$.
\State Error tolerance $\epsilon_{\velocityfield} > 0$, stepsize $\beta^k = \beta > 0$, and maximum iterations $K_{\velocityfield} > 0$.
\State \emph{Loop}:
\State  \quad Update $\template \circ \gelement{\tau_j,0}{\velocityfield^k}$ for $1 \leq j \leq MN$ by 
\[
\template \circ \gelement{\tau_j,0}{\velocityfield^k} \gets  \bigl(\template \circ \gelement{\tau_{j-1},0}{\velocityfield^k}\bigr) \circ \Bigl(\Id - \frac{1}{N}\velocityfield^k(\tau_j,\Cdot)\Bigr) 
\]
\quad with $\template \circ \gelement{0,0}{\velocityfield^k} = \template$.
\State \quad Update $h_{t_i, t_i}^{\template, \velocityfield^k}$ for $1 \leq i \leq N$ by
\[
h_{t_i, t_i}^{\template, \velocityfield^k} \gets \ForwardOpAdjoint_{t_i}\bigl(\ForwardOp_{t_i}(\template \circ \gelement{t_i,0}{\velocityfield^k}) - \data(t_i,\Cdot)\bigr).
\]
\State \quad Compute $h_{\tau_j, t_i}^{\template, \velocityfield^k}$ for $1 \leq i \leq N$ by
\[
h_{\tau_j, t_i}^{\template, \velocityfield^k}    
  \gets  \Bigl(1 + \frac{1}{MN} \Div\velocityfield(\tau_j,\Cdot)\Bigr)  h_{\tau_{j+1}, t_i}^{\template, \velocityfield^k} \circ \Bigl(\Id + \frac{1}{MN}\velocityfield^k(\tau_j,\Cdot) \Bigr) 
 \]
\quad for $j = iM-1, iM-2, \ldots, 0$.
\State \quad Compute $\grad\GoalFunctionalV_{\template}(\velocityfield^k)(\tau_j,\Cdot) $ (using \ac{FFT} to compute the convolution) by 
 \begin{multline*}
 \grad\GoalFunctionalV_ {\template}(\velocityfield^k)(\tau_j,x)   \\
     \gets - \frac{2}{N} \sum_{\{i\geq 1 : t_i \geq \tau_j\}} \biggl[\int_{\domain}\kernel(x,y)\grad \bigl(\template \circ \gelement{\tau_j,0}{\velocityfield^k}\bigr)(y) h_{\tau_j, t_i}^{\template,\velocityfield^k}(y) \dint y 
 - \mu_2\velocityfield^k(\tau_j, x) \biggr]
  \end{multline*}
\quad  for $0 \leq j \leq MN$. 
\State \quad Update $\velocityfield^k(\tau_j, \Cdot)$ for $0 \leq j \leq MN$ by:
\[
 \velocityfield^{k+1}(\tau_j,\Cdot)  \gets \velocityfield^k(\tau_j,\Cdot) -  \beta \grad\GoalFunctionalV_ {\template}(\velocityfield^k)(\tau_j,\Cdot).
\]
\State \quad \textbf{If} $\bigr\vert \velocityfield^{k+1} - \velocityfield^k \bigr\vert > \epsilon_{\velocityfield}$ and $k<K_{\velocityfield}$, then $k \gets k+1$, \textbf{goto} \emph{Loop}.
\State \textbf{Output} $\velocityfield^{k+1}$.
\end{algorithmic}
\end{algorithm}

\subsection{Alternating template reconstruction and velocity field estimation}

As described in the beginning of \cref{sec:computed_method}, we aim to
solve \cref{eq:VarReg_LDDMM_time-discretized} by an iterative scheme where iterates 
for $\template$ and $\velocityfield$ are updated in an alternating manner as in \cref{eq:Alternating}.
Hence, at each step we solve two sub-problems, one for updating $\template$ 
given $\velocityfield$ (\cref{algo:GDSB_4DCT} in \cref{subsec:LinearizedSplitBregman}) 
and the other for updating $\velocityfield$ given 
$\template$ (\cref{algo:GradientDescentAlgorithmForFiniteFunctional_0} 
in \cref{subsec:GradientDescent_velocithfield}). 

The algorithms for solving the two sub-problems are iterative, so there are inner 
iterations for each outer iterative step that update the template and velocity field. 
Our ultimate aim however is to obtain the minimum of the whole 
model \cref{eq:VarReg_LDDMM_time-discretized}, even if solve each subproblem thoroughly, 
we may have no any benefit to arrive at the desirable solution rapidly. 
Hence this motives us to limit the inner iteration number to be one for solving each subproblem. 
The final algorithm for recovering the template and velocity field is presented in the 
following \cref{algo:Alternating_reconstruction}. We further analyze the 
computational complexity of \cref{algo:Alternating_reconstruction}.

\begin{algorithm}
\caption{Alternately minimizing model \cref{eq:VarReg_LDDMM_deformation_time_discrete}}
\label{algo:Alternating_reconstruction}
\begin{algorithmic}[1]
\State \emph{Initialize}:
\State Given $M, N$.
\State $k \gets 0$.
\State $t_i \gets \frac{i}{N}$ for $i = 0, 1, \ldots, N$. This subdivides the time interval $[0,1]$ uniformly into $N$ parts. 
\State $\tau_j \gets \frac{j}{MN}$ for $j = 0, 1, \ldots, MN$. This subdivides the time interval $[0,1]$ uniformly into $MN$ parts. 
\State Fixed kernel function $\kernel(\Cdot,\Cdot)$.
\State Given regularization parameters $\mu_1, \mu_2 > 0$.
\State $\template^k \gets \template^0$, where the template is initialized.
\State $\velocityfield^k(\tau_i) \gets 0$, where the velocity field is initialized to a zero velocity field.
\State Error tolerances $\epsilon_{\template}, \epsilon_{\velocityfield} > 0$, stepsizes $\alpha^k = \alpha > 0, \beta^k = \beta > 0$, and maximum iteration number $K > 0$.
\State \emph{Loop}:
\State  \quad Let $\velocityfield = \velocityfield^k$. Perform Lines 10-13 in \cref{algo:GDSB_4DCT}. Output $\template^{k+1}$.
\State  \quad Let $\template = \template^{k+1}$. Perform Lines 11-15 in \cref{algo:GradientDescentAlgorithmForFiniteFunctional_0}. Output $\velocityfield^{k+1}$.
\State \quad \textbf{If} $\bigr\vert \velocityfield^{k+1} - \velocityfield^k \bigr\vert > \epsilon_{\velocityfield}$ or $\bigr\vert \template^{k+1} - \template^k \bigr\vert > \epsilon_{\template}$, and $k<K$,  \\
\quad then $k \gets k+1$, \textbf{goto} \emph{Loop}.
\State \textbf{Output} $\template^{k+1}$, $\velocityfield^{k+1}$.
\end{algorithmic}
\end{algorithm}

\paragraph{Complexity analysis} The complexity analysis, including computational cost and space complexity, 
is presented for \cref{algo:Alternating_reconstruction}. Since the main part of each iteration of \cref{algo:Alternating_reconstruction} 
is located on lines 12-13 (actually lines 10-13 in \cref{algo:GDSB_4DCT} 
and lines 11-15 in \cref{algo:GradientDescentAlgorithmForFiniteFunctional_0}), we restrict our complexity  analysis 
to these parts. For ease of description, suppose that $\domain \subset \Real^2$ and the size of the image to 
be reconstructed is $n\times n$ pixels. 

On line 10 of \cref{algo:GDSB_4DCT} and line 11 of \cref{algo:GradientDescentAlgorithmForFiniteFunctional_0}, we need to 
update $\template \circ \gelement{\tau_j,0}{\velocityfield}$ for $j = 1, \ldots, MN$. Moreover, each of them should be 
used to compute the gradient of the objective functional on line 14 
of \cref{algo:GradientDescentAlgorithmForFiniteFunctional_0}, so they need to be stored at hand. 
Hence, in these two steps, the computational cost is $O(n^2MN)$ and the space complexity is $O(n^2MN)$. 

For line 11 of \cref{algo:GDSB_4DCT} and line 12 of \cref{algo:GradientDescentAlgorithmForFiniteFunctional_0}, 
the $\eta_{t_i, t_i}^{\template, \velocityfield}$ (\ie $h_{t_i, t_i}^{\template, \velocityfield}$) need to be 
updated and then stored for $i = 1, \ldots, N$. The computational cost is $O(n^2 N_d N)$, where $N_d$ is the 
number of data points. Actually, the $N_d$ is at least proportional to the size of $n$, 
which is often $\sqrt{2} n N_v$ with $N_v$ denoting the number of views.
Hence, the computational cost is as much as $O(n^3NN_v)$. 
Since the calculation for the forward and backward 
projections is on the fly, the required space is not too demanding.

Furthermore, on line 12 of \cref{algo:GDSB_4DCT} and 
line 13 of \cref{algo:GradientDescentAlgorithmForFiniteFunctional_0}, for $i = 1, \ldots, N$, 
the $\eta_{\tau_j, t_i}^{\template, \velocityfield}$ (\ie $h_{\tau_j, t_i}^{\template, \velocityfield}$) need to be 
updated and stored for $j$ from $iM-1$ to $0$, then are used to compute the gradient of the 
objective functional for each time point on line 14 of \cref{algo:GradientDescentAlgorithmForFiniteFunctional_0}. 
Therefore, the computational cost is  $O(n^2MN^2)$. For lines 11-12 of \cref{algo:GDSB_4DCT} and 
lines 12-13 of \cref{algo:GradientDescentAlgorithmForFiniteFunctional_0}, the space complexity is $O(n^2MN^2)$. 

For line 13 of \cref{algo:GDSB_4DCT}, we need to update $\template$ once, the computational cost 
is $O(n^2N)$ and the space complexity is $O(n^2N)$.  At each time point, the \ac{FFT} is used to compute the gradient of the 
objective functional on line 14 of \cref{algo:GradientDescentAlgorithmForFiniteFunctional_0}. 
Hence the computational cost for this line is $O(MN^2n^2\log n)$. For line 15 
of \cref{algo:GradientDescentAlgorithmForFiniteFunctional_0}, we need to update a vector field 
at each time point. Since a vector field would take twice more memory than a scalar field on 2D domain, 
we spend twice more computational cost to update that. Even so, the computational cost 
is $O(n^2MN)$ and the space complexity is $O(n^2MN)$.

In summary, for \cref{algo:Alternating_reconstruction}, 
the computational cost is at least $O(n^3N)$ and the space complexity is $O(n^2MN^2)$. 

\section{Numerical experiments}
\label{sec:numerical _experiments}

In this section, the proposed method for joint image reconstruction and motion estimation is applied to parallel beam tomography 
with very sparse or highly noisy data in spatiotemporal (\eg 2D + time) imaging. We use the intensity-preserving group action to 
consider the involved deformations. Although this is not a full evaluation, it nevertheless illustrates the 
performance of the proposed method. The numerical implementation is partially based on Operator 
Discretization Library (\href{http://github.com/odlgroup/odl}{http://github.com/odlgroup/odl}).

The forward operator $\ForwardOp_t \colon \RecSpace \to \DataSpace$ is realized by 2D Radon transforms, namely,
\begin{equation*}
R(f)(\omega, x) = \int_{\Real} f(x + s\omega)\dint s  \quad \text{for $\omega \in S^1$ and $x \in \omega^{\bot}$},
\end{equation*}
where $R$ denotes Radon transform, $S^1$ is the unit circle and $(\omega, x)$ determines a line on $\Real^2$ with direction $\omega$ through $x$.

Moreover, consider $\Vspace$ as the space of vector fields that is a \ac{RKHS} with a reproducing kernel 
represented by symmetric and positive definite Gaussian 
function $\kernel \colon \domain \times \domain \to \Matrix_{+}^{2 \times 2}$ given as 
\begin{equation}\label{eq:KernelEq}
  \kernel(x,y) := 
      \exp\Bigl(-\dfrac{1}{2 \sigma^2} \Vert x-y \Vert_2^2 \Bigr)
      \begin{pmatrix} 
          1  & 0 \\
          0  & 1
      \end{pmatrix} 
\quad\text{for $x,y \in \Real^2$.}
\end{equation}
The $\sigma >0$ acts as a kernel width. 

The images of all gates are supported on $\domain$. For image in each gate, 
the noise-free data per view is measured by evaluating the 2D parallel beam 
scanning geometry. Then the additive Gaussian white noise 
at varying levels is added onto the noise-free data, which leads to the noise data.  
As in \cite{ChOz18}, the noise level in data is quantified in terms of \ac{SNR} defined in logarithmic decibel (dB).

\subsection{Test suites and results}\label{sec:results}

The test suites seek to assess the performance against different noise levels, and the 
sensitivity against various selections of regularization parameters $\mu_1$, $\mu_2$, and kernel width $\sigma$. 
We also compare the proposed method to \ac{TV}-based static reconstruction method.

\subsubsection{Test suite 1: Overview performance}

Here we consider a test for evaluating the overview performance. This test uses a multi-object phantom with five gates (\ie $N = 5$). 
The used phantom is shown in the last row of \cref{Test_suite_1:multi_object_phantom}, which is taken from \cite{ChOz18}. 

The image in each gate is consisting of six separately star-like objects with grey-values over $[0, 1]$, which is digitized 
using $438 \times 438$ pixels. The images of all gates are supported on a fixed rectangular 
domain $\Omega = [-16, 16] \times [-16, 16]$. For image in each gate, the noise-free data per view 
is measured by the 2D parallel beam scanning geometry with even 620 bins, which is supported on the range of $[-24, 24]$.  
For gate $i\,(1 \le i \le N)$, the scanning views are distributed 
on $[(i-1)\pi /36, \pi + (i-1) \pi/36]$ uniformly, and the view number is 12. 
Then three different levels of additive Gaussian white noise are added onto the noise-free data. 
The resulting \ac{SNR} are about $4.71$dB, $7.7$dB, and $14.67$dB, respectively. 
To make clear, we show the noise-free and noise projection data of the first view for each. 
\begin{figure}[htbp]
\centering
\begin{minipage}[t]{0.33\textwidth}%
     \centering
     \includegraphics[trim=30 15 30 40, clip, width=\textwidth]{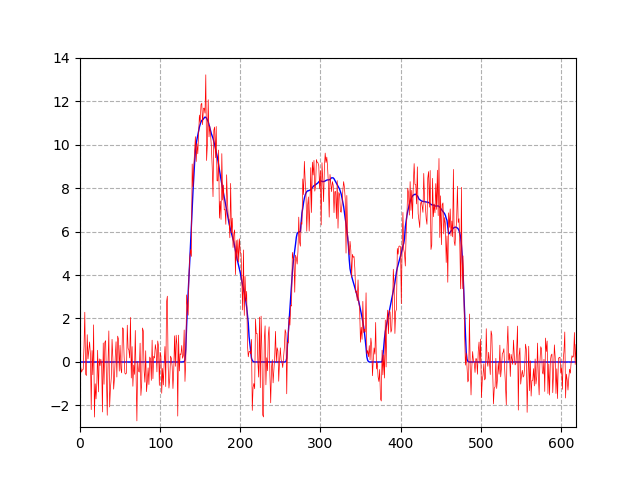}
     \vskip-0.25\baselineskip
   \end{minipage}%
   \hfill
   \begin{minipage}[t]{0.33\textwidth}%
     \centering
    \includegraphics[trim=30 15 30 40, clip, width=\textwidth]{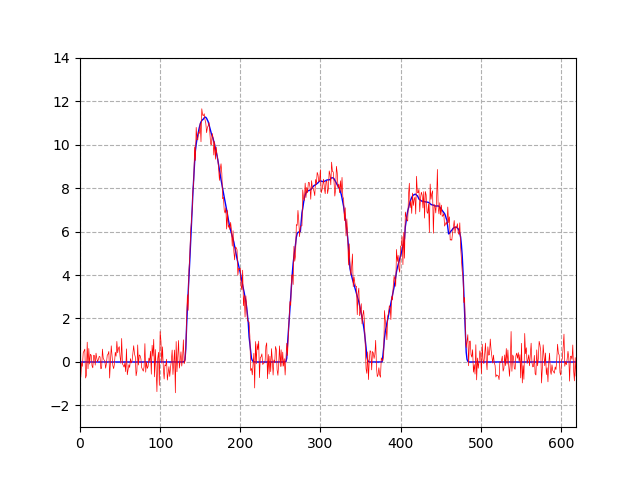}
     \vskip-0.25\baselineskip
   \end{minipage}%
   \hfill
   \begin{minipage}[t]{0.33\textwidth}%
     \centering
    \includegraphics[trim=30 15 30 40, clip, width=\textwidth]{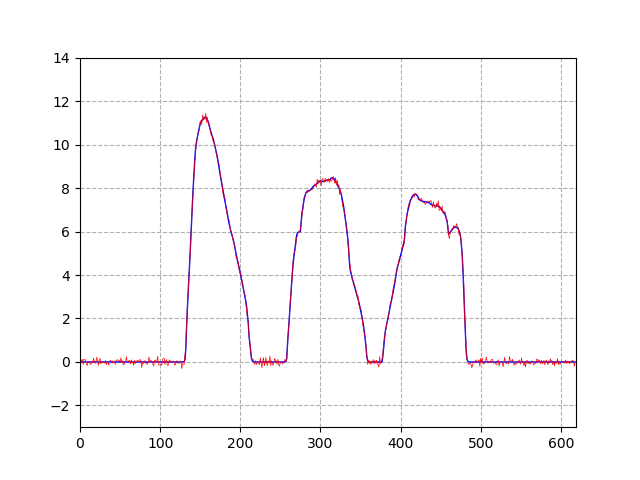}
     \vskip-0.25\baselineskip
   \end{minipage}%
   \caption{Data at the first view for Gate 1. The left, middle, and right figures show data at the first view for different noise levels $4.71$dB, $7.7$dB, and $14.67$dB, respectively. The blue smooth curve is noise-free data, and the red jagged curve is noisy data.}
\label{Test_suite_1:different_noise_levels}
\end{figure}

The factor of discretized time degree is $M = 2$, which is defined in \cref{subsubsection:CompDiffDeform}. 
The kernel width is selected to $\sigma = 2$. 
The gradient stepsizes are set as $\alpha = 0.01$ and $\beta = 0.05$, respectively. 
First we apply \cref{algo:GDSB_4DCT} to obtain an initial template image after 50 
iterations, then use \cref{algo:Alternating_reconstruction} to solve the proposed model. 
Note that the above iteration number is not unchangeable, just needs enough to gain an appropriately 
initial template for \cref{algo:Alternating_reconstruction}.

The regularization parameters ($\mu_1, \mu_2$) are selected 
as $(0.05, 10^{-7})$ for data noise level $4.71$dB, $(0.025, 10^{-7})$ for data noise level $7.7$dB,
and $(0.01, 10^{-7})$ for data noise level $14.67$dB, respectively. 
The lower \ac{SNR}, the lager value of $\mu_1$. The maximum iteration number is 
set to be $200$. The reconstructed results are shown in \cref{Test_suite_1:multi_object_phantom}. 
It is clear that the reconstructed images (rows~1--3) are close to the corresponding ground truth, 
even though the data \ac{SNR} is very low.  
\begin{figure}[htbp]
\centering
\begin{minipage}[t]{0.2\textwidth}%
     \centering
     \includegraphics[trim=75 25 60 40, clip, width=\textwidth]{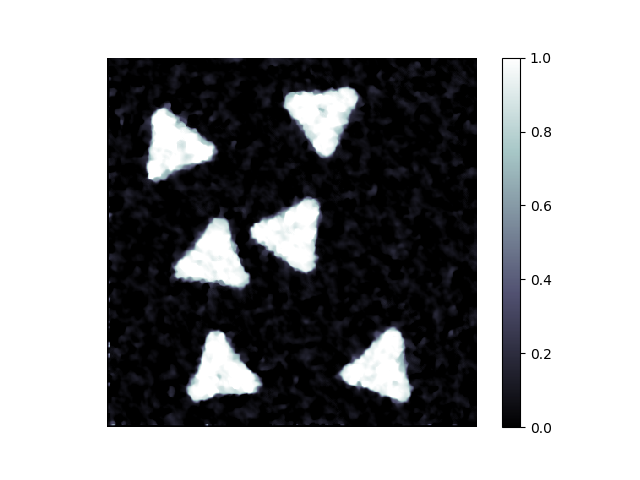}
     \vskip-0.25\baselineskip
   \end{minipage}%
   \hfill
   \begin{minipage}[t]{0.2\textwidth}%
     \centering
    \includegraphics[trim=75 25 60 40, clip, width=\textwidth]{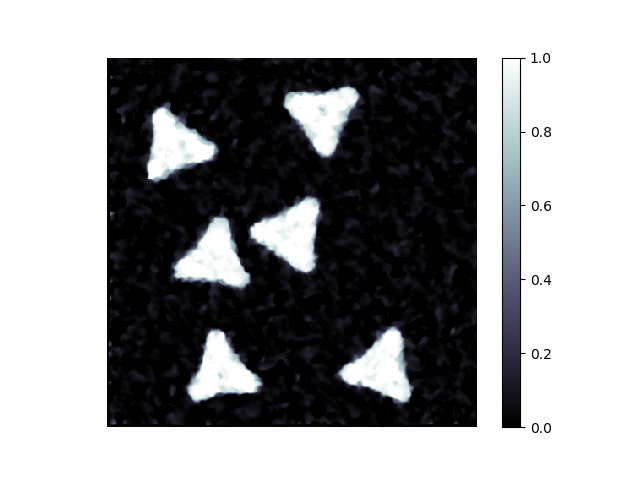}
     \vskip-0.25\baselineskip
   \end{minipage}%
   \hfill
   \begin{minipage}[t]{0.2\textwidth}%
     \centering
     \includegraphics[trim=75 25 60 40, clip, width=\textwidth]{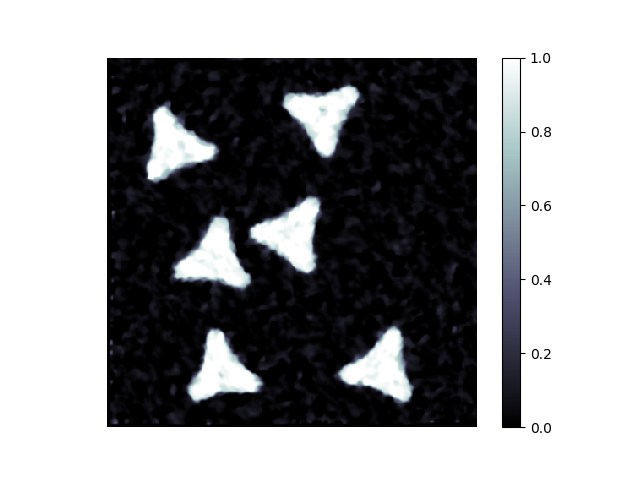}
     \vskip-0.25\baselineskip
   \end{minipage}%
      \hfill
   \begin{minipage}[t]{0.2\textwidth}%
     \centering
     \includegraphics[trim=75 25 60 40, clip, width=\textwidth]{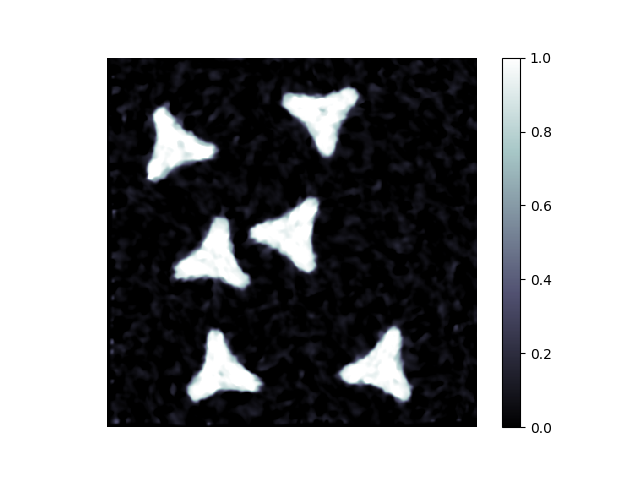}     
     \vskip-0.25\baselineskip
   \end{minipage}%
   \hfill
   \begin{minipage}[t]{0.2\textwidth}%
     \centering
    \includegraphics[trim=75 25 60 40, clip, width=\textwidth]{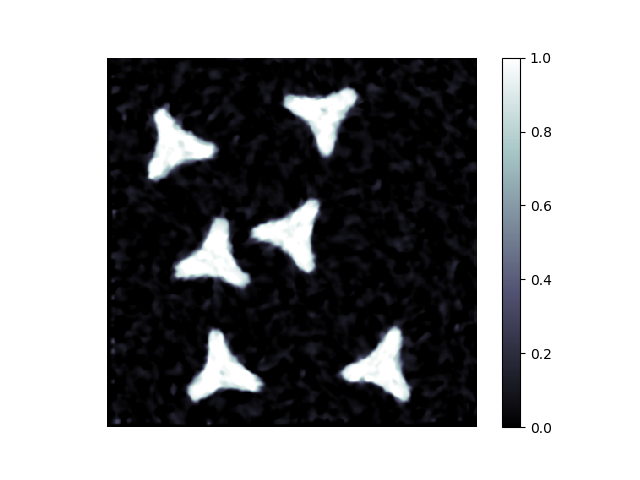}
     \vskip-0.25\baselineskip
   \end{minipage}%
\par\medskip      
\begin{minipage}[t]{0.2\textwidth}%
     \centering
     \includegraphics[trim=75 25 60 40, clip, width=\textwidth]{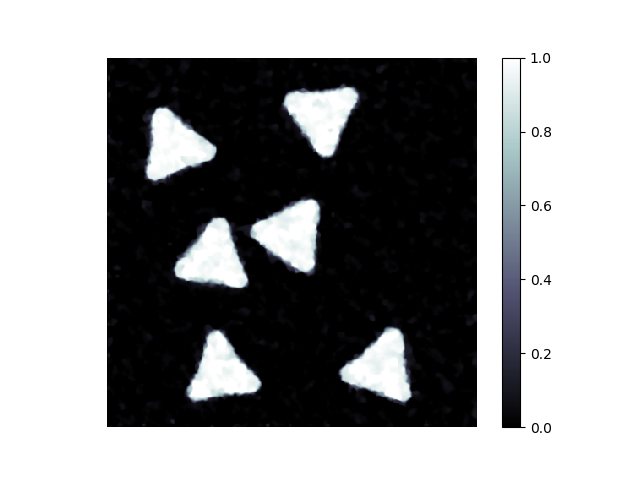}
     \vskip-0.25\baselineskip
   \end{minipage}%
   \hfill
   \begin{minipage}[t]{0.2\textwidth}%
     \centering
    \includegraphics[trim=75 25 60 40, clip, width=\textwidth]{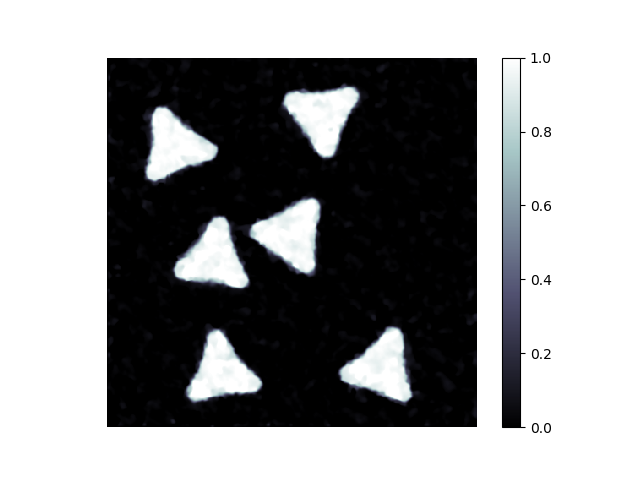}
     \vskip-0.25\baselineskip
   \end{minipage}%
   \hfill
   \begin{minipage}[t]{0.2\textwidth}%
     \centering
     \includegraphics[trim=75 25 60 40, clip, width=\textwidth]{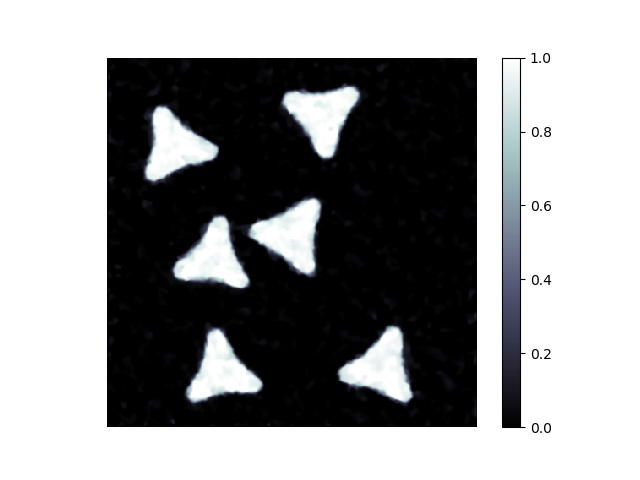}
     \vskip-0.25\baselineskip
   \end{minipage}%
      \hfill
   \begin{minipage}[t]{0.2\textwidth}%
     \centering
     \includegraphics[trim=75 25 60 40, clip, width=\textwidth]{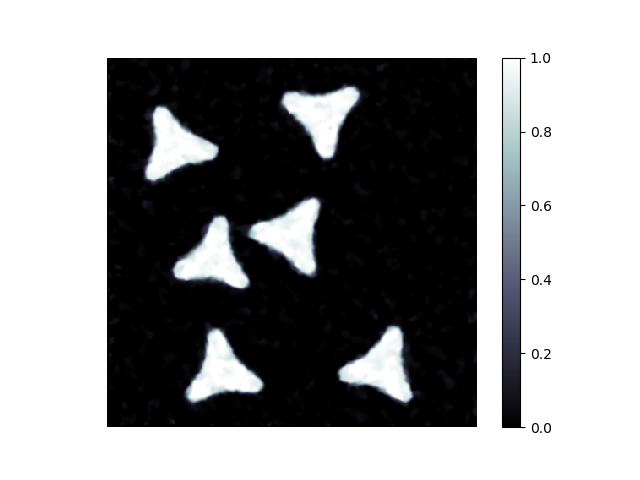}     
     \vskip-0.25\baselineskip
   \end{minipage}%
   \hfill
   \begin{minipage}[t]{0.2\textwidth}%
     \centering
    \includegraphics[trim=75 25 60 40, clip, width=\textwidth]{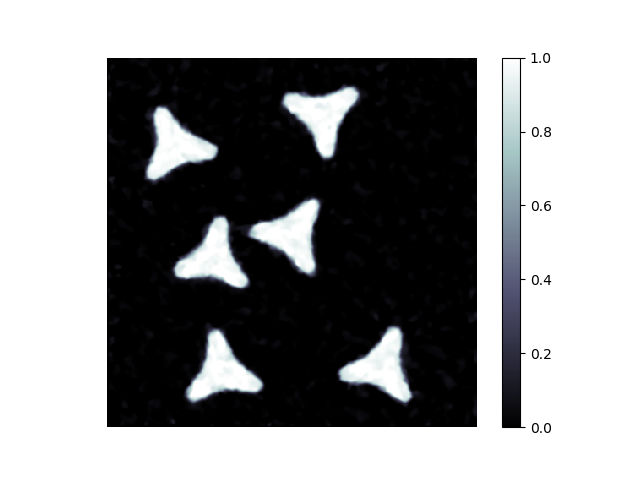}
     \vskip-0.25\baselineskip
   \end{minipage}%
\par\medskip      
\begin{minipage}[t]{0.2\textwidth}%
     \centering
     \includegraphics[trim=75 25 60 40, clip, width=\textwidth]{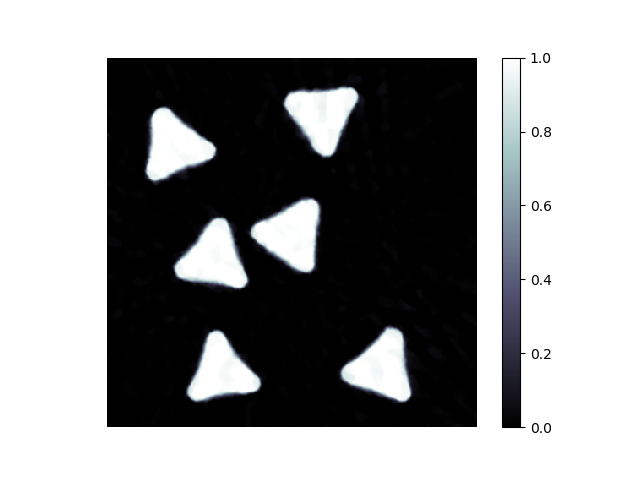}
     \vskip-0.25\baselineskip
   \end{minipage}%
   \hfill
   \begin{minipage}[t]{0.2\textwidth}%
     \centering
    \includegraphics[trim=75 25 60 40, clip, width=\textwidth]{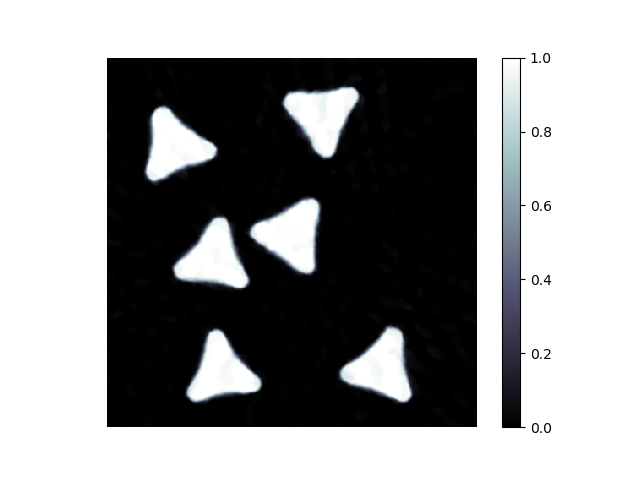}
     \vskip-0.25\baselineskip
   \end{minipage}%
   \hfill
   \begin{minipage}[t]{0.2\textwidth}%
     \centering
     \includegraphics[trim=75 25 60 40, clip, width=\textwidth]{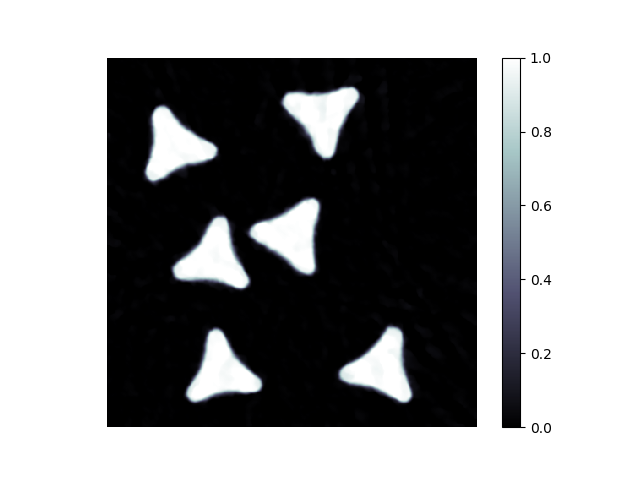}
     \vskip-0.25\baselineskip
   \end{minipage}%
      \hfill
   \begin{minipage}[t]{0.2\textwidth}%
     \centering
     \includegraphics[trim=75 25 60 40, clip, width=\textwidth]{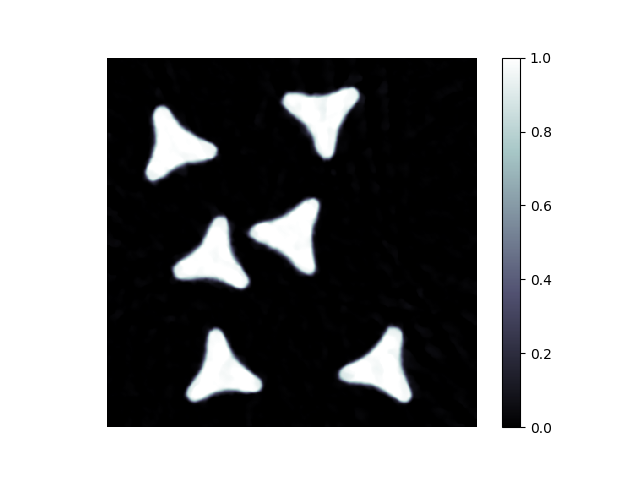}     
     \vskip-0.25\baselineskip
   \end{minipage}%
   \hfill
   \begin{minipage}[t]{0.2\textwidth}%
     \centering
    \includegraphics[trim=75 25 60 40, clip, width=\textwidth]{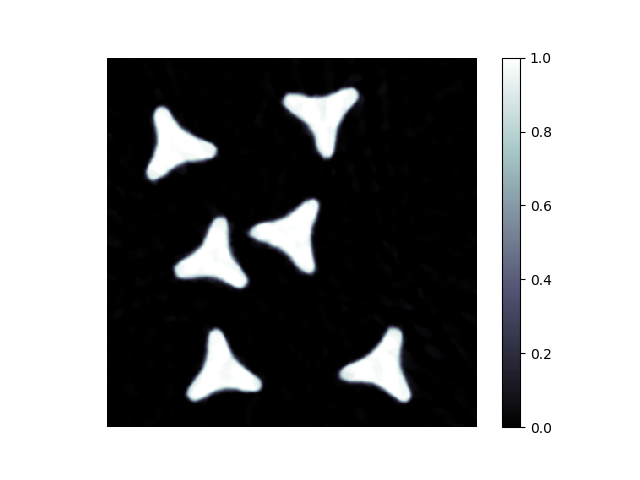}
     \vskip-0.25\baselineskip
   \end{minipage}%
\par\medskip      
   \begin{minipage}[t]{0.2\textwidth}%
     \centering
     \includegraphics[trim=75 25 60 40, clip, width=\textwidth]{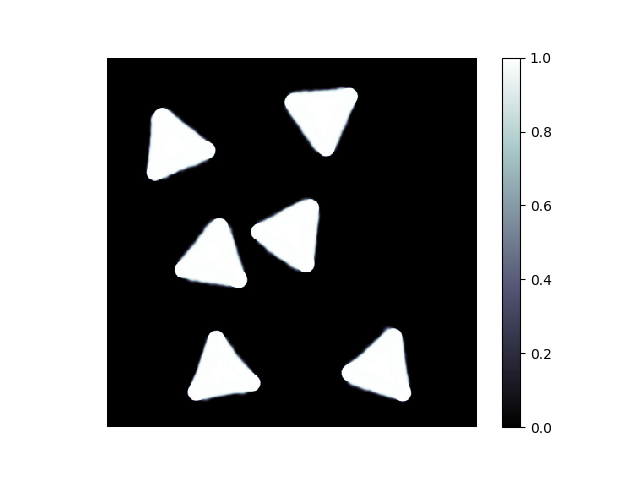}
     \vskip-0.25\baselineskip
     Gate 1
   \end{minipage}%
\hfill
   \begin{minipage}[t]{0.2\textwidth}%
     \centering
     \includegraphics[trim=75 25 60 40, clip, width=\textwidth]{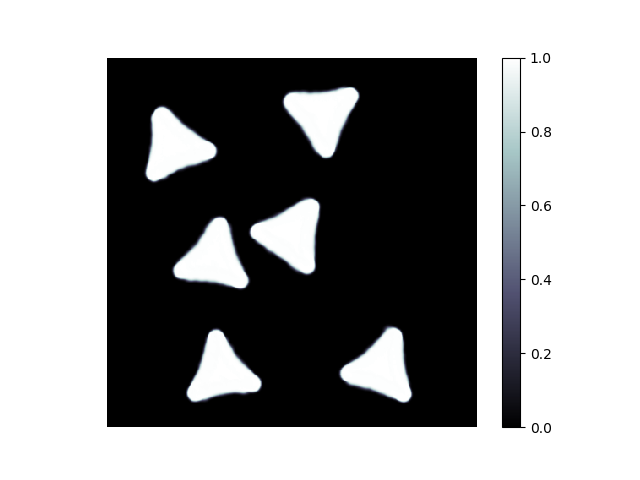}
     \vskip-0.25\baselineskip
     Gate 2
   \end{minipage}%
   \hfill
   \begin{minipage}[t]{0.2\textwidth}%
     \centering
    \includegraphics[trim=75 25 60 40, clip, width=\textwidth]{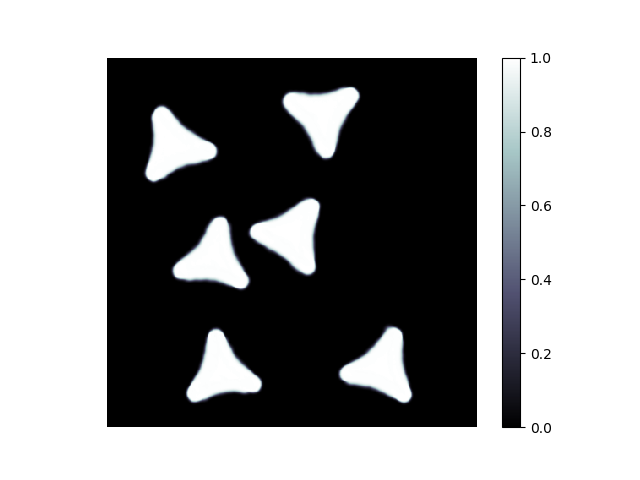}
     \vskip-0.25\baselineskip
          Gate 3
   \end{minipage}%
   \hfill
   \begin{minipage}[t]{0.2\textwidth}%
     \centering
     \includegraphics[trim=75 25 60 40, clip, width=\textwidth]{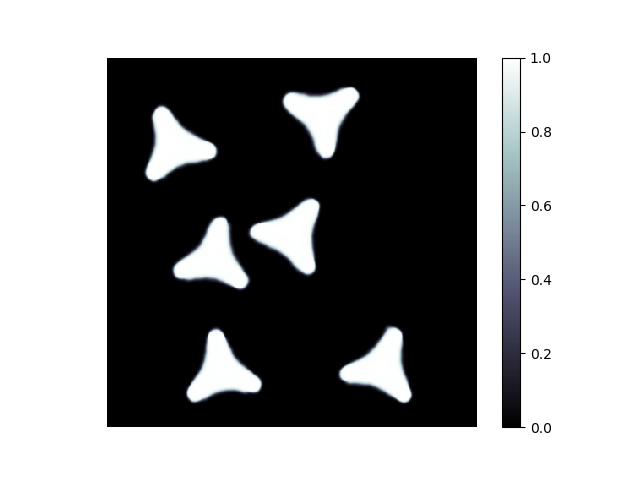}
     \vskip-0.25\baselineskip
     Gate 4
   \end{minipage}%
   \hfill
   \begin{minipage}[t]{0.2\textwidth}%
     \centering
     \includegraphics[trim=75 25 60 40, clip, width=\textwidth]{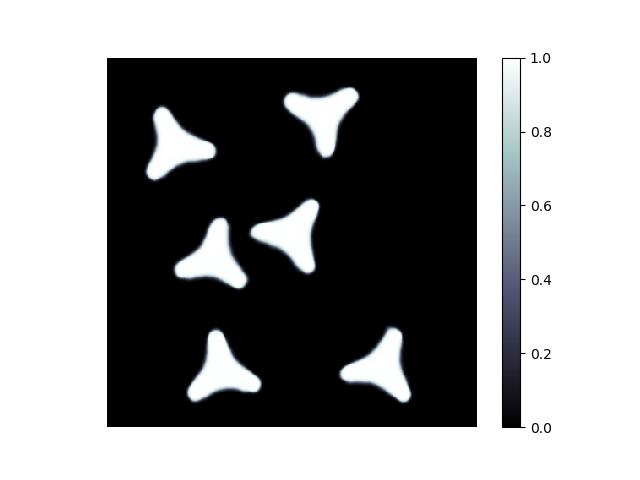}
     \vskip-0.25\baselineskip
     Gate 5
   \end{minipage}%
\caption{Multi-object phantom. Columns represent the gates and the first three rows are reconstructed spatiotemporal images for the data with noise levels $4.71$dB, $7.7$dB, and $14.67$dB, respectively. The last row shows the ground truth for each gate.}
\label{Test_suite_1:multi_object_phantom}
\end{figure}

Apart from the visual perception, the reconstruction is quantitatively compared 
using \ac{SSIM} and \ac{PSNR}, which is frequently used to 
evaluate image quality \cite{WaBoShSi04}.  
The \ac{SSIM} and \ac{PSNR} values are tabulated in \cref{Test_suite_1:different_data}. 
As listed in the above table, the corresponding \ac{SSIM} and \ac{PSNR} values are depended on 
\ac{SNR} of the data. The higher \ac{SNR}, the larger values of \ac{SSIM} and \ac{PSNR}. 
\begin{table}
\centering
\begin{tabular}{c | r r r r r}
&  \multicolumn{1}{c}{Gate 1} &  \multicolumn{1}{c}{Gate 2} & \multicolumn{1}{c}{Gate 3} 
 &  \multicolumn{1}{c}{Gate 4}  &  \multicolumn{1}{c}{Gate 5} \\ 
\hline                                   
 \multirow{2}{*}{Row 1}  & 0.4069          &  0.4208   &  0.4273   &  0.4305  & 0.4337  \\
     &22.10           &  23.02    &  23.27    &  23.40   & 23.64 \\
\hline                                   
 \multirow{2}{*}{Row 2}  &0.5934          &  0.6086   &  0.6131   &  0.6149  & 0.6156   \\
    & 25.36           &  27.22    &  27.37    &  27.66  &  27.86  \\
\hline
 \multirow{2}{*}{Row 3}    & 0.8411   &  0.8523   &  0.8564   &  0.8576   & 0.8587   \\  
     & 28.30    &  31.49    &  32.48    &  32.65  & 32.76   \\
     \hline
\end{tabular}
\caption{\Ac{SSIM} and \ac{PSNR} values of reconstructed spatiotemporal images compared to the related ground truths for the measured data with varying noise levels, see \cref{Test_suite_1:multi_object_phantom} for detailed images. Each table entry has two values, where the upper is the value of \ac{SSIM} and the bottom is the value of \ac{PSNR}, which correspond to the image on the counterpart position of row 1--row3 of \cref{Test_suite_1:multi_object_phantom}.
}
\label{Test_suite_1:different_data}
\end{table}

\paragraph{Comparison against static \ac{TV}-regularization}  
It is well-known that tomographic reconstruction by \ac{TV}-regularization outperforms other methods, 
such as \ac{FBP}, when the gradient of the image is sparse.
This is furthermore especially notable when data is under-sampled. 
In our tests we use a phantom (ground truth image) that has sparse gradient, 
so comparing against static \ac{TV}-regularization 
pitches our approach against one of the best static reconstruction methods.

For static \ac{TV}-regularization we disregard any temporal evolution, 
which is equivalent to simplify the spatiotemporal problem into one with a single gate.
The whole tomographic data set will then have 60 projection views. 
The regularization parameter for static \ac{TV}-regularization is selected depending 
on the \ac{SNR} of data in the same way as for spatiotemporal reconstruction. 
\begin{figure}[htbp]
\centering
\begin{minipage}[t]{0.33\textwidth}%
     \centering
     \includegraphics[trim=75 25 60 40, clip, width=\textwidth]{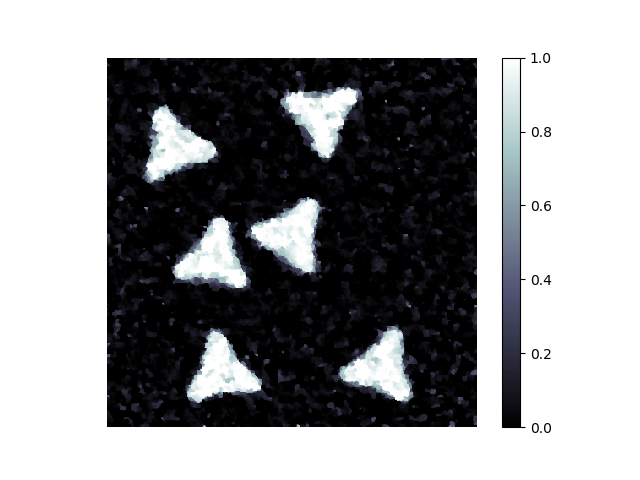}
     \vskip-0.25\baselineskip
   \end{minipage}%
   \hfill
   \begin{minipage}[t]{0.33\textwidth}%
     \centering
    \includegraphics[trim=75 25 60 40, clip, width=\textwidth]{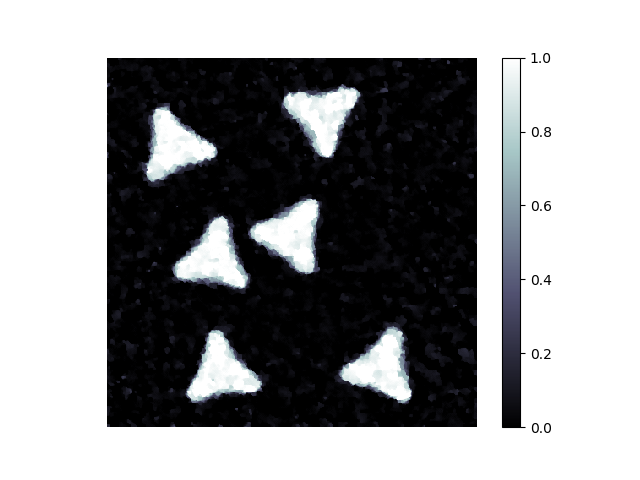}
     \vskip-0.25\baselineskip
   \end{minipage}%
   \hfill
   \begin{minipage}[t]{0.33\textwidth}%
     \centering
     \includegraphics[trim=75 25 60 40, clip, width=\textwidth]{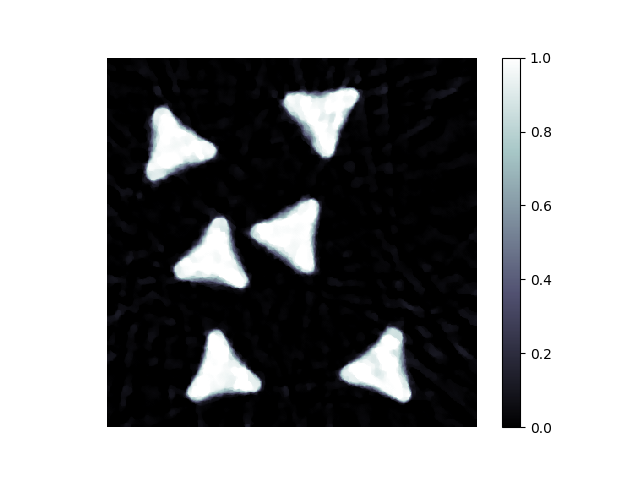}
     \vskip-0.25\baselineskip
   \end{minipage}%
\caption{\Ac{TV}-regularized reconstructions for the measured data with different noise levels $4.71$dB (left), $7.7$dB (middle), and $14.67$dB (right), respectively.}
\label{Test_suite_1:TV_reconstruction}
\end{figure}

Reconstructions obtained by static \ac{TV}-regularization are shown in \cref{Test_suite_1:TV_reconstruction}, 
the edges of which are become blurring against those by our method. In addition, 
the corresponding \ac{SSIM} and \ac{PSNR} values are listed in \cref{Test_suite_1:tv_different_data}. 
Compared \cref{Test_suite_1:different_data} with \cref{Test_suite_1:tv_different_data}, 
the values of \ac{SSIM} and \ac{PSNR} for static \ac{TV}-regularization is lower than those with the proposed method.
\begin{table}
\centering
\begin{tabular}{l | r r r r r}
&  \multicolumn{1}{c}{Gate 1} &  \multicolumn{1}{c}{Gate 2} & \multicolumn{1}{c}{Gate 3} 
 &  \multicolumn{1}{c}{Gate 4}  &  \multicolumn{1}{c}{Gate 5} \\ 
\hline                                   
 \multirow{2}{*}{Left}  & 0.3012          &  0.3163   &  0.3202   &  0.3146  & 0.3030  \\
     &18.57           &  19.94    &  20.42    &  19.98   & 18.80 \\
\hline                                   
 \multirow{2}{*}{Middle}  &0.4673          &  0.4867   &  0.4910   &  0.4840  & 0.4694   \\
    & 20.44           &  22.82    &  23.76    &  22.90  &  20.79  \\
\hline
 \multirow{2}{*}{Right}    & 0.6004   &  0.6239   &  0.6291   &  0.6212   & 0.6029   \\  
     & 21.42    &  24.71    &  26.40    &  25.00  & 21.95   \\
     \hline
\end{tabular}
\caption{\Ac{SSIM} and \ac{PSNR} values of reconstructed images by \ac{TV} method compared to each ground truth from Gate 1 to Gate 5 with the measured data with varying noise levels, see \cref{Test_suite_1:TV_reconstruction} for detailed images. Each entry has two values, where the upper is the value of \ac{SSIM} and the bottom is the value of \ac{PSNR}.}
\label{Test_suite_1:tv_different_data}
\end{table}

\subsubsection{Test suite 2: Sensitivity against selections of regularization parameters}

To solve the proposed model, three regularization parameters $\mu_1$, $\mu_2$ and $\sigma$ need to be selected. 
Hence the sensitivity test should be concerned against the selections of these parameters.  

As shown in the last row of \cref{Test_suite_2:heart_phantom}, a heart phantom with four gates (\ie $N = 4$) is used in this test, 
which is originated from \cite{GrMi07}. The image from each gate is consisting of a 
heart-like object with grey-values in $[0, 1]$, which is digitized using $120 \times 120$ pixels. 
The images of all gates are supported on a fixed rectangular 
domain $\Omega = [-4.5, 4.5] \times [-4.5, 4.5]$. For image in each gate, the noise-free data per view 
is measured by evaluating the 2D parallel beam scanning geometry with uniform $170$ bins, 
which is supported on the range of $[-6.4, 6.4]$. Then the additive Gaussian white noise is added onto the noise-free data. 
The resulting \ac{SNR} is about $14.9$dB. For gate $i\,(1 \le i \le N)$, the scanning views are distributed 
on $[(i-1)\pi /5, \pi + (i-1) \pi/5]$ evenly, which totally has 20 views. The factor of discretized time degree is $M = 2$. 
The gradient stepsizes are set as $\alpha = 0.01$ and $\beta = 0.05$, respectively.

We first employ \cref{algo:GDSB_4DCT} to gain an initial template after 50 iterations, 
then use \cref{algo:Alternating_reconstruction} to solve the proposed model. 
With selecting different values for regularization parameters, after 200 iterations, 
the reconstructed results are obtained, as shown in \cref{Test_suite_2:heart_phantom}. 
The detailed selections of varying parameter values can be referred to the caption of \cref{Test_suite_2:heart_phantom}.
For comparison, we also present the result for static \ac{TV}-regularization in \cref{Test_suite_2:TV_reconstruction} as did in the first 
test. As shown in \cref{Test_suite_2:heart_phantom}, 
the related reconstructed results are almost the same and close to the counterpart ground truth. 
However, the reconstructed result by static \ac{TV}-regularization in \cref{Test_suite_2:TV_reconstruction} 
is severely degraded.
\begin{figure}[htbp]
\centering
   \begin{minipage}[t]{0.2\textwidth}%
     \centering
     \includegraphics[trim=75 25 60 40, clip, width=\textwidth]{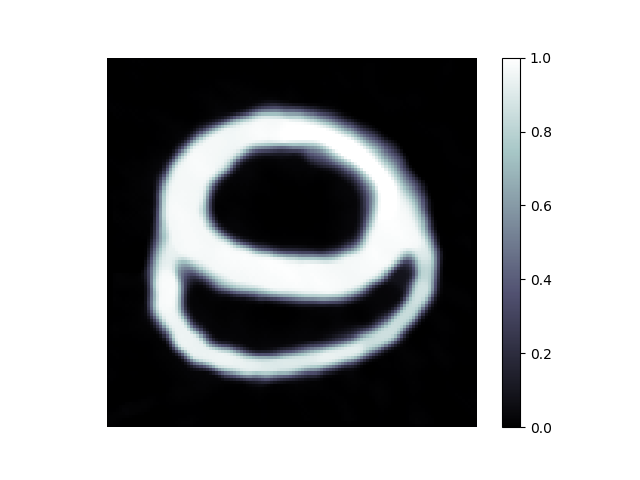}
     \vskip-0.25\baselineskip
   \end{minipage}%
   \hfill
   \begin{minipage}[t]{0.2\textwidth}%
     \centering
    \includegraphics[trim=75 25 60 40, clip, width=\textwidth]{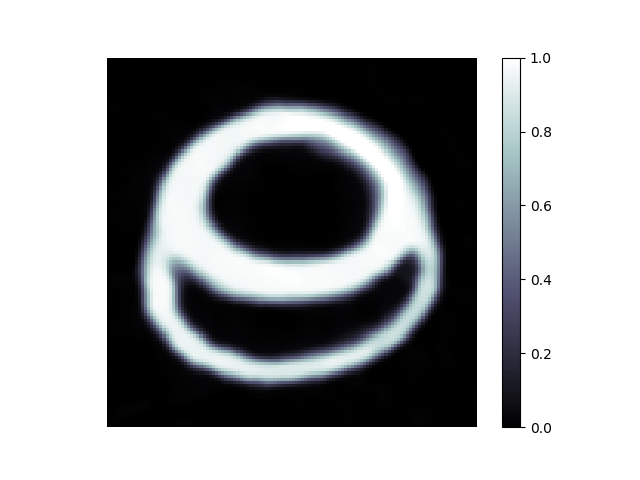}
     \vskip-0.25\baselineskip
   \end{minipage}%
   \hfill
   \begin{minipage}[t]{0.2\textwidth}%
     \centering
     \includegraphics[trim=75 25 60 40, clip, width=\textwidth]{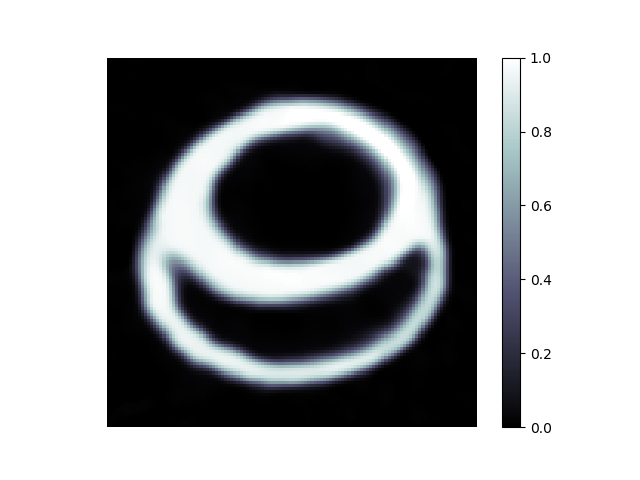}
     \vskip-0.25\baselineskip
   \end{minipage}%
      \hfill
   \begin{minipage}[t]{0.2\textwidth}%
     \centering
     \includegraphics[trim=75 25 60 40, clip, width=\textwidth]{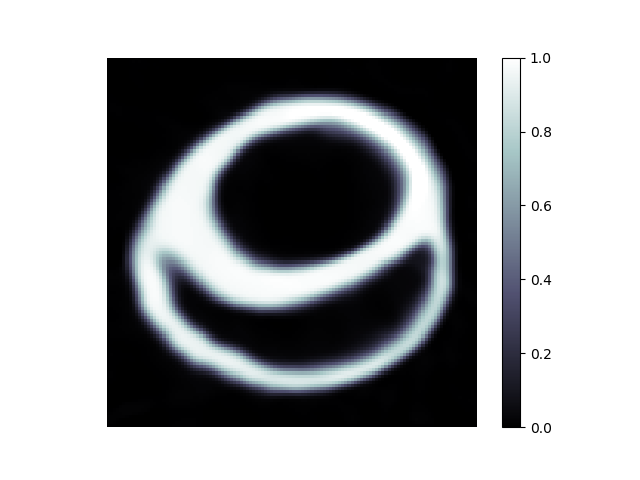}     
     \vskip-0.25\baselineskip
   \end{minipage}%
\par\medskip      
   \begin{minipage}[t]{0.2\textwidth}%
     \centering
     \includegraphics[trim=75 25 60 40, clip, width=\textwidth]{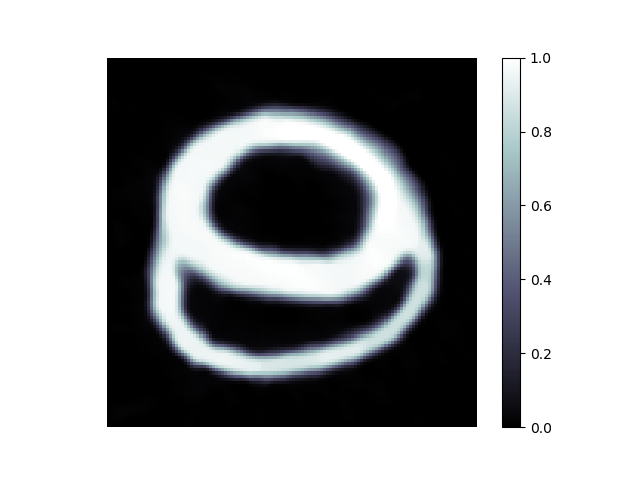}
     \vskip-0.25\baselineskip
   \end{minipage}%
   \hfill
   \begin{minipage}[t]{0.2\textwidth}%
     \centering
    \includegraphics[trim=75 25 60 40, clip, width=\textwidth]{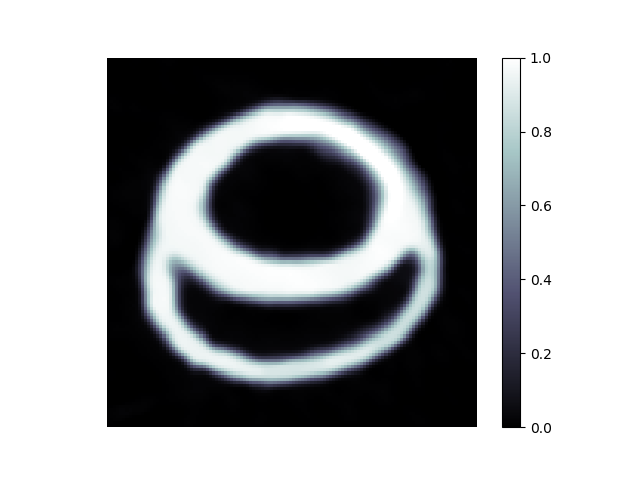}
     \vskip-0.25\baselineskip
   \end{minipage}%
   \hfill
   \begin{minipage}[t]{0.2\textwidth}%
     \centering
     \includegraphics[trim=75 25 60 40, clip, width=\textwidth]{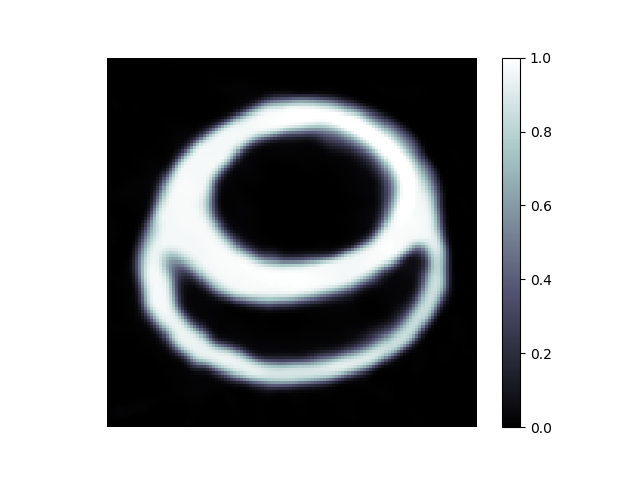}
     \vskip-0.25\baselineskip
   \end{minipage}%
      \hfill
   \begin{minipage}[t]{0.2\textwidth}%
     \centering
     \includegraphics[trim=75 25 60 40, clip, width=\textwidth]{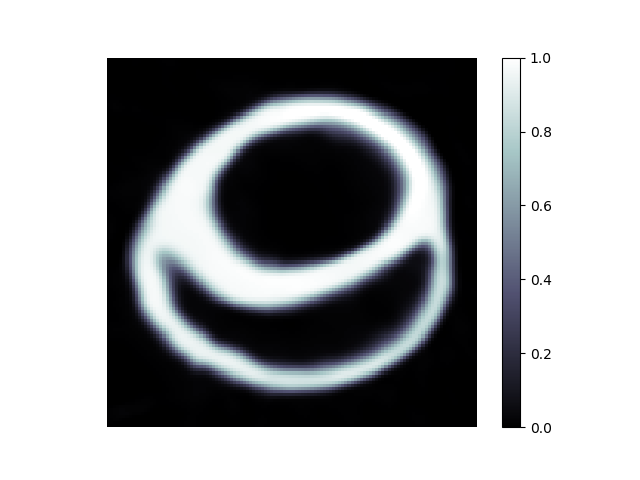}     
     \vskip-0.25\baselineskip
   \end{minipage}%
\par\medskip            
\begin{minipage}[t]{0.2\textwidth}%
     \centering
     \includegraphics[trim=75 25 60 40, clip, width=\textwidth]{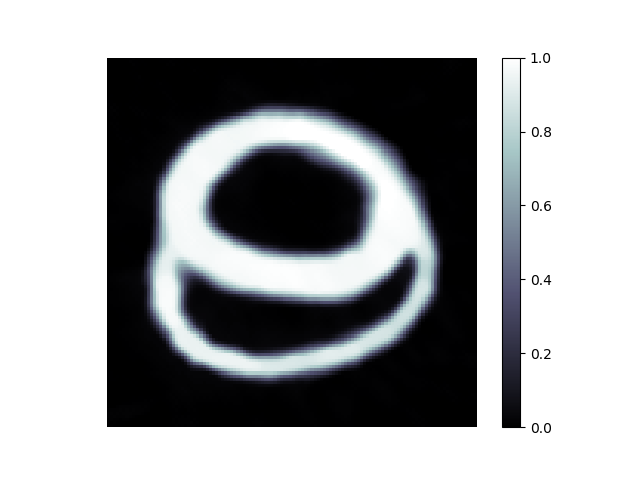}
     \vskip-0.25\baselineskip
   \end{minipage}%
   \hfill
   \begin{minipage}[t]{0.2\textwidth}%
     \centering
    \includegraphics[trim=75 25 60 40, clip, width=\textwidth]{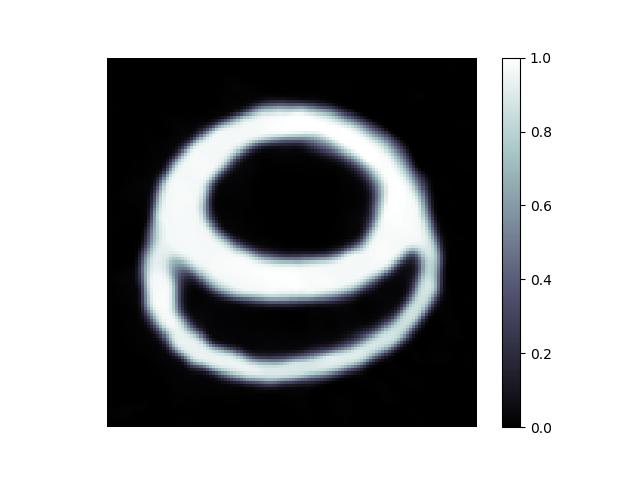}
     \vskip-0.25\baselineskip
   \end{minipage}%
   \hfill
   \begin{minipage}[t]{0.2\textwidth}%
     \centering
     \includegraphics[trim=75 25 60 40, clip, width=\textwidth]{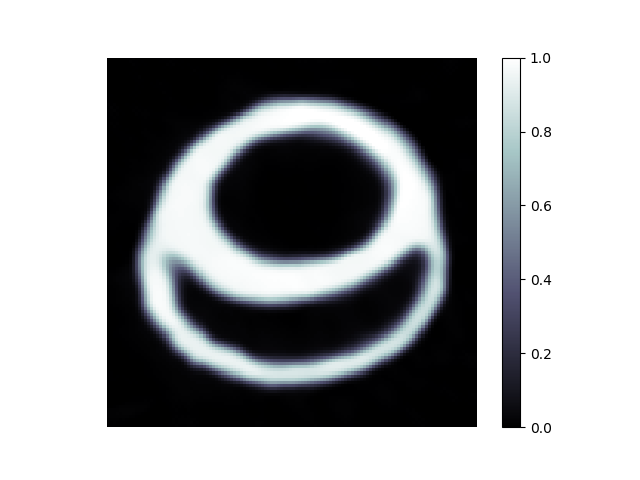}
     \vskip-0.25\baselineskip
   \end{minipage}%
      \hfill
   \begin{minipage}[t]{0.2\textwidth}%
     \centering
     \includegraphics[trim=75 25 60 40, clip, width=\textwidth]{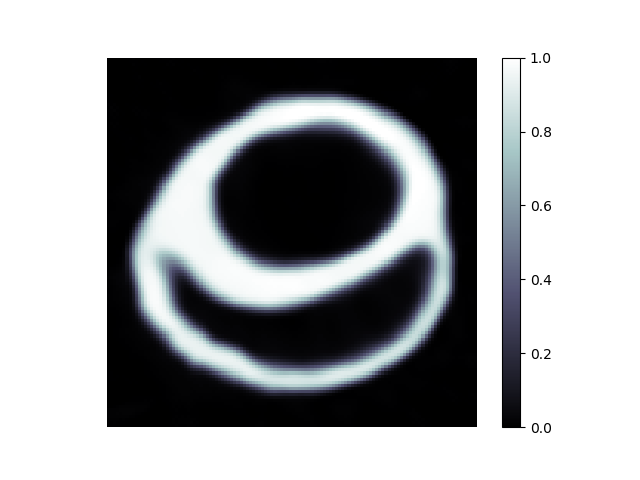}     
     \vskip-0.25\baselineskip
   \end{minipage}%
\par\medskip      
   \begin{minipage}[t]{0.2\textwidth}%
     \centering
     \includegraphics[trim=75 25 60 40, clip, width=\textwidth]{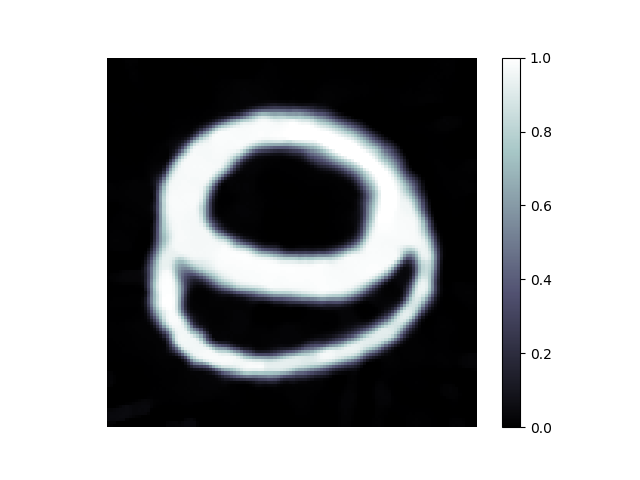}
     \vskip-0.25\baselineskip
   \end{minipage}%
   \hfill
   \begin{minipage}[t]{0.2\textwidth}%
     \centering
    \includegraphics[trim=75 25 60 40, clip, width=\textwidth]{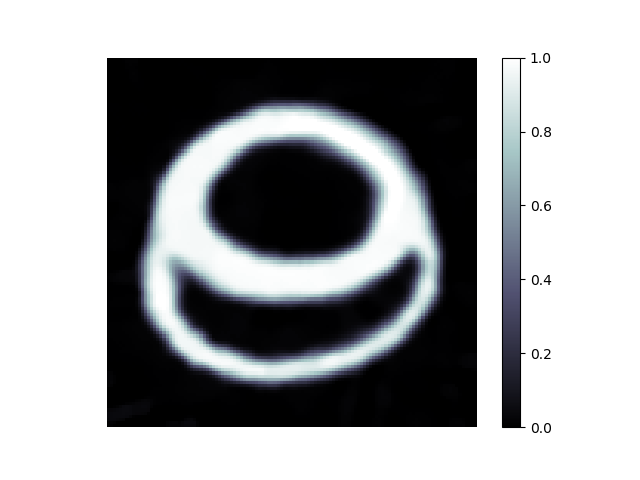}
     \vskip-0.25\baselineskip
   \end{minipage}%
   \hfill
   \begin{minipage}[t]{0.2\textwidth}%
     \centering
     \includegraphics[trim=75 25 60 40, clip, width=\textwidth]{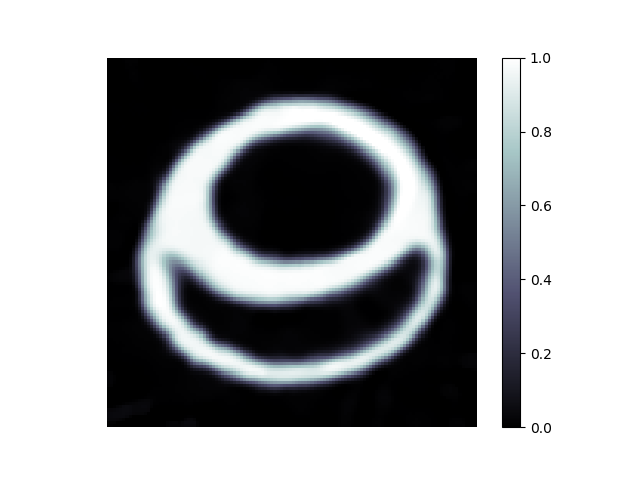}
     \vskip-0.25\baselineskip
   \end{minipage}%
      \hfill
   \begin{minipage}[t]{0.2\textwidth}%
     \centering
     \includegraphics[trim=75 25 60 40, clip, width=\textwidth]{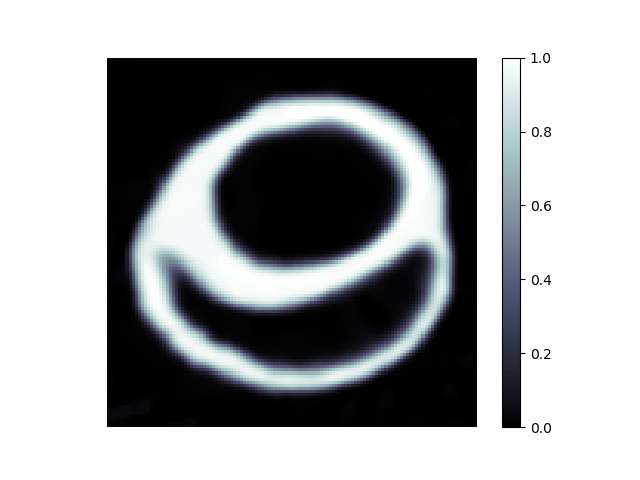}     
     \vskip-0.25\baselineskip
   \end{minipage}%
\par\medskip      
 \begin{minipage}[t]{0.2\textwidth}%
     \centering
     \includegraphics[trim=75 25 60 40, clip, width=\textwidth]{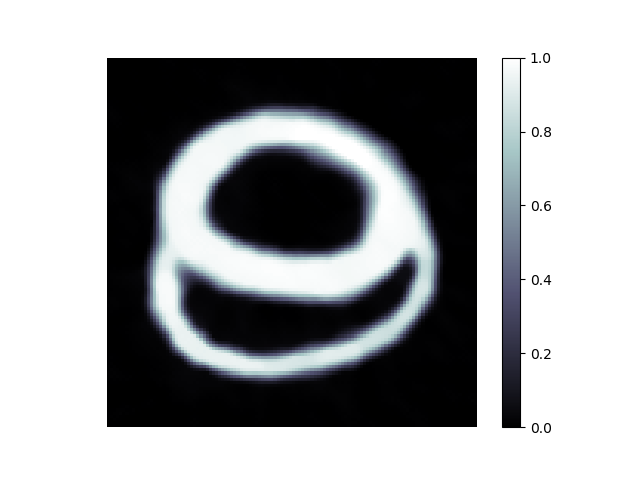}
     \vskip-0.25\baselineskip
   \end{minipage}%
   \hfill
   \begin{minipage}[t]{0.2\textwidth}%
     \centering
    \includegraphics[trim=75 25 60 40, clip, width=\textwidth]{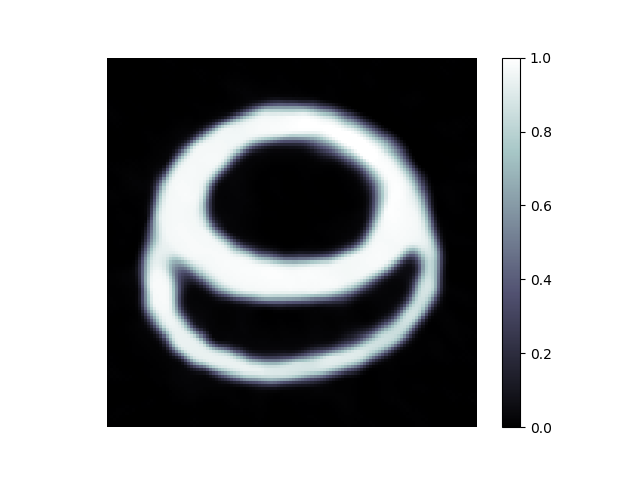}
     \vskip-0.25\baselineskip
   \end{minipage}%
   \hfill
   \begin{minipage}[t]{0.2\textwidth}%
     \centering
     \includegraphics[trim=75 25 60 40, clip, width=\textwidth]{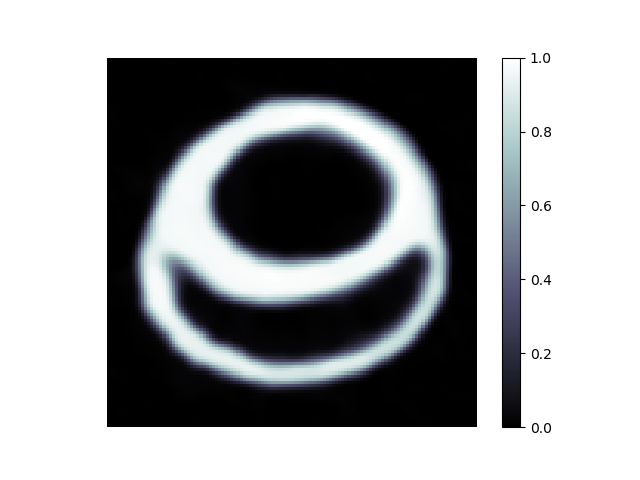}
     \vskip-0.25\baselineskip
   \end{minipage}%
      \hfill
   \begin{minipage}[t]{0.2\textwidth}%
     \centering
     \includegraphics[trim=75 25 60 40, clip, width=\textwidth]{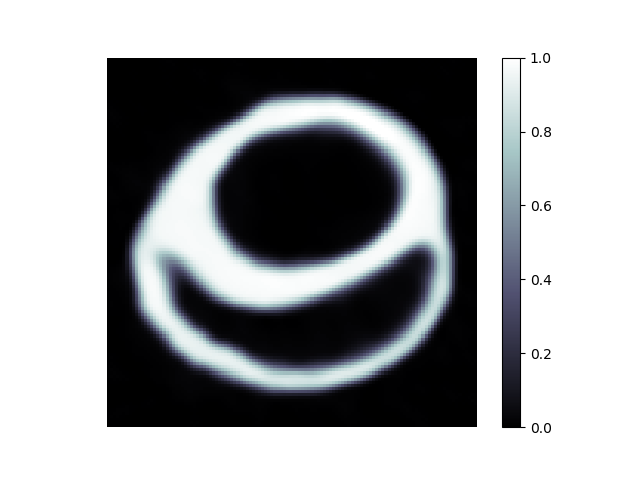}     
     \vskip-0.25\baselineskip
   \end{minipage}%
\par\medskip      
\begin{minipage}[t]{0.2\textwidth}%
     \centering
     \includegraphics[trim=75 25 60 40, clip, width=\textwidth]{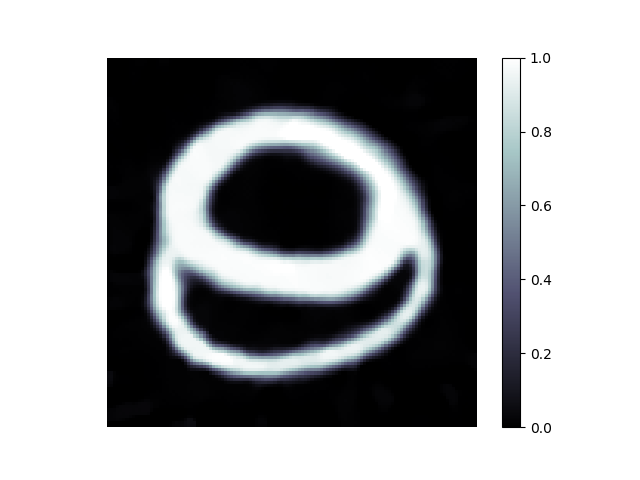}
     \vskip-0.25\baselineskip
   \end{minipage}%
   \hfill
   \begin{minipage}[t]{0.2\textwidth}%
     \centering
    \includegraphics[trim=75 25 60 40, clip, width=\textwidth]{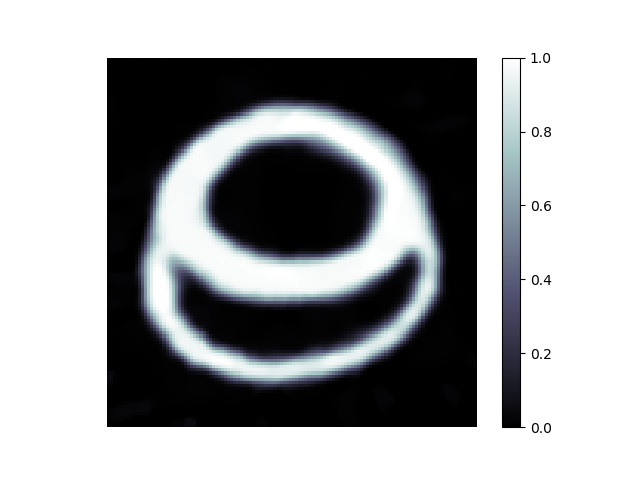}
     \vskip-0.25\baselineskip
   \end{minipage}%
   \hfill
   \begin{minipage}[t]{0.2\textwidth}%
     \centering
     \includegraphics[trim=75 25 60 40, clip, width=\textwidth]{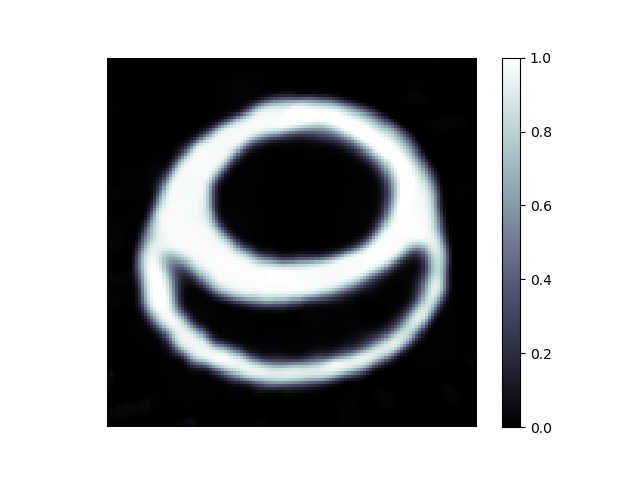}
     \vskip-0.25\baselineskip
   \end{minipage}%
      \hfill
   \begin{minipage}[t]{0.2\textwidth}%
     \centering
     \includegraphics[trim=75 25 60 40, clip, width=\textwidth]{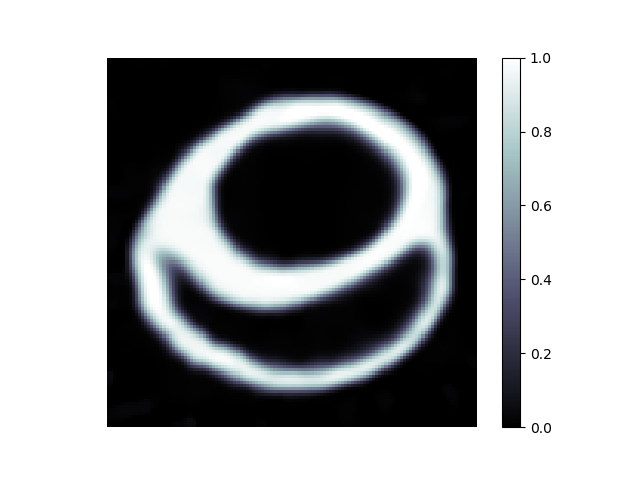}     
     \vskip-0.25\baselineskip
   \end{minipage}%
\par\medskip      
   \begin{minipage}[t]{0.2\textwidth}%
     \centering
     \includegraphics[trim=75 25 60 40, clip, width=\textwidth]{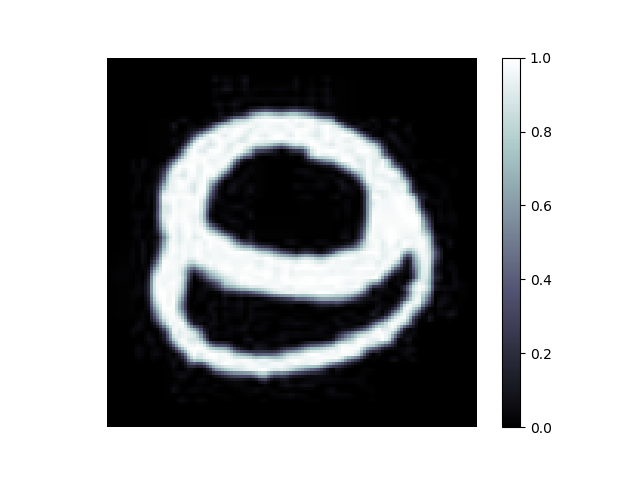}
     \vskip-0.25\baselineskip
     Gate 1
   \end{minipage}%
   \hfill
   \begin{minipage}[t]{0.2\textwidth}%
     \centering
    \includegraphics[trim=75 25 60 40, clip, width=\textwidth]{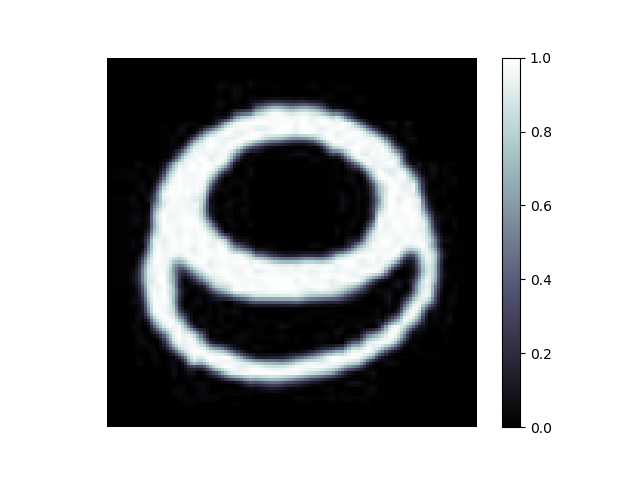}
     \vskip-0.25\baselineskip
     Gate 2
   \end{minipage}%
   \hfill
   \begin{minipage}[t]{0.2\textwidth}%
     \centering
     \includegraphics[trim=75 25 60 40, clip, width=\textwidth]{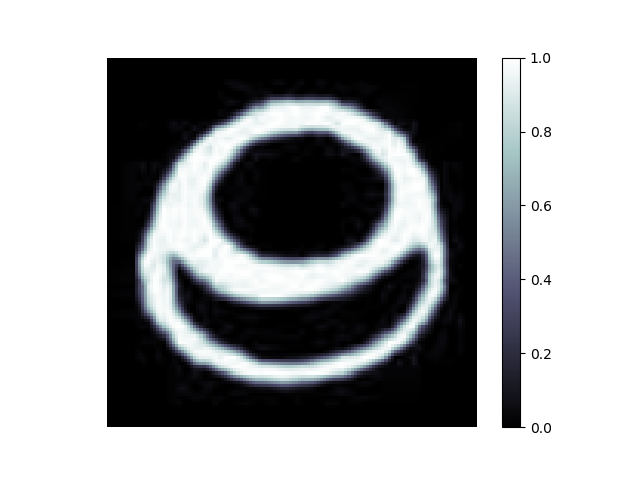}
     \vskip-0.25\baselineskip
     Gate 3
   \end{minipage}%
      \hfill
   \begin{minipage}[t]{0.2\textwidth}%
     \centering
     \includegraphics[trim=75 25 60 40, clip, width=\textwidth]{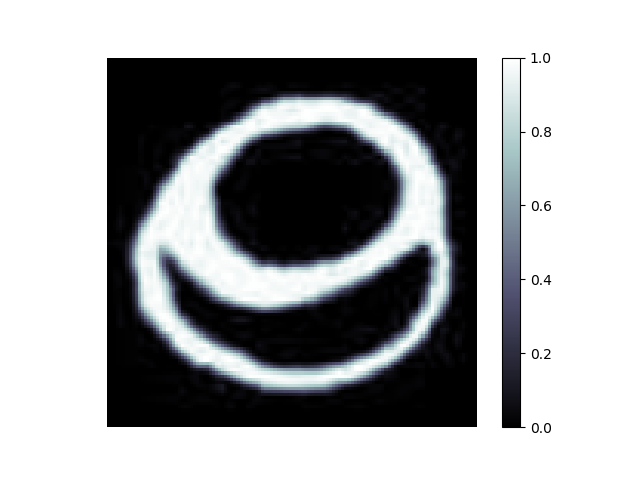}     
     \vskip-0.25\baselineskip
     Gate 4
   \end{minipage}%
\caption{Heart phantom. The columns are the 4 gates and the first 6 rows are reconstructed spatiotemporal images with parameter pairs ($\mu_1$, $\mu_2$, $\sigma$) chosen as $(0.01, 10^{-7}, 1.0)$, $(0.01, 10^{-6}, 1.0)$, $(0.01, 10^{-7}, 0.5)$, $(0.005, 10^{-7}, 0.5)$, $(0.01, 10^{-6}, 0.5)$, and $(0.005, 10^{-6}, 0.5)$. The last row shows the ground truth for each gate.}
\label{Test_suite_2:heart_phantom}
\end{figure}

\begin{figure}[htbp]
\centering
     \includegraphics[trim=75 25 60 40, clip, width=0.33\textwidth]{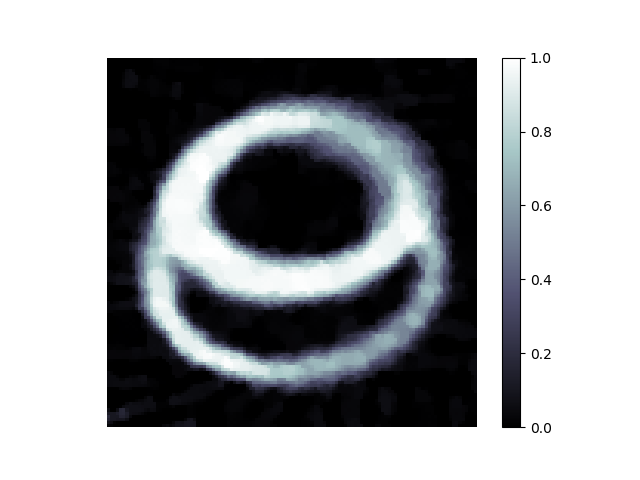}
\caption{Reconstruction by static \ac{TV}-regulrization for the measured data with comparable noise level. The selected parameter pair ($\mu_1$, $\mu_2$, $\sigma$) as $(0.005, 10^{-7}, 0.5)$.}
\label{Test_suite_2:TV_reconstruction}
\end{figure}

Furthermore, the \ac{SSIM} and \ac{PSNR} values are calculated in \cref{Test_suite_2:Sensitivity_table}. 
As given in the above table, the corresponding SSIM and PSNR values of the proposed method are 
relatively larger than those obtained by static \ac{TV}-regulrization, but are quite similar for 
different parameter pairs. 
\begin{table}
\centering
\begin{tabular}{c | r r r r}
&  \multicolumn{1}{c}{Gate 1} &  \multicolumn{1}{c}{Gate 2} & \multicolumn{1}{c}{Gate 3} 
 &  \multicolumn{1}{c}{Gate 4}   \\ 
\hline                                   
 \multirow{2}{*}{Row 1} & 0.8928          &  0.9382   &  0.9340   &  0.9235  \\
   & 24.25           &  28.44    &  27.64    &  26.28   \\
\hline                                   
 \multirow{2}{*}{Row 2}  &  0.8960          &  0.9415   &  0.9346   &  0.9234   \\
   &  24.30           &  28.47    &  27.67    &  26.37   \\
\hline
\multirow{2}{*}{Row 3}  &    0.9103   &  0.9497   &  0.9459   &  0.9343    \\  
  &   25.33    &  29.41    &  28.97    &  27.78   \\
\hline                                  
\multirow{2}{*}{Row 4}  &   0.8939          &  0.9370   &  0.9357   &  0.9297    \\  
   & 25.24           &  29.06    &  28.60    &  27.77   \\
\hline                                   
\multirow{2}{*}{Row 5} &  0.9087          &  0.9472   &  0.9462   &  0.9336   \\ 
 & 25.14           &  29.30    &  28.83    &  27.68  \\
 \hline                                 
\multirow{2}{*}{Row 6} &   0.8884          &  0.9339   &  0.9358   &  0.9295   \\ 
 &   25.23           &  29.06    &  28.65    &  27.74    \\
     \hline
 \multirow{2}{*}{TV} &   0.5641 & 0.7310  &  0.7458 &  0.5969   \\ 
 &   14.09 &  19.09 & 18.96 & 14.01    \\
     \hline
\end{tabular}
\caption{Row 1--row 6: \ac{SSIM} and \ac{PSNR} values of reconstructed spatiotemporal images compared to the related ground truths for varying values of the regularization parameters $\mu_1$, $\mu_2$, and the kernel width $\sigma$, see \cref{Test_suite_2:heart_phantom} for detailed images. Each entry has two values, where the upper is value of \ac{SSIM} and the bottom is  value of \ac{PSNR}, which corresponds to the image on the counterpart position of row 1--row 6 of \cref{Test_suite_2:heart_phantom}. The last row:  \Ac{SSIM} and \ac{PSNR} values of reconstructed image by \ac{TV}-based method compared to each ground truth from gates 1 to 4 by the measured data with comparable noise level.}
\label{Test_suite_2:Sensitivity_table}
\end{table}

As shown in \cref{Test_suite_2:Sensitivity_table}, these values are a little bit decreased when the 
value of kernel parameter $\sigma$ is changed from  $1.0$ to $0.5$ with fixed $\mu_1$ and $\mu_2$, 
as compared the values between row 1 and row 3, also row 2 and row 4, for instance. 
Therefore, this test demonstrates that to some extent the proposed method is not sensitive to 
the precise selection of the regularization parameters under the visual perception and 
the quantitative comparison (\ac{SSIM} and \ac{PSNR}). However, those values are 
selected too big or too small, which would causes over- or under-regularized results.

\section{Conclusions and the future work}\label{sec:Conclusions}

A general framework of variational model has been investigated for joint image 
reconstruction and motion estimation in spatiotemporal imaging, which is based on 
the deformable templates from shape theory. Along this framework, we proposed a new 
variational model for solving the above joint problem using the principle of \ac{LDDMM}. 
The proposed model is equivalent to a \ac{PDE}-constrained optimal control problem. 
Based on the equivalency, we made a mathematical 
comparison against the joint \ac{TV}-\ac{TV} optical flow based model \cite{BuDiSch18}, 
which showed that our method can guarantee 
elastically diffeomorphic deformations, and is of benefit to 
the practical computation additionally. Furthermore, the theoretical comparison was also performed between the proposed 
model and other diffeomorphic motion models, which demonstrated that the optimal velocity field of our model 
is distributed \wrt time $t$ averagely, and nonvanishing neither on the initial nor on the end time point. 
We also presented an efficiently computational method for the time-discretized version of the 
proposed model, which showed that its optimal solution is consistent with that of the time-continuous one, 
but this is not the case for the diffeomorphic motion model in \cite{HiSzWaSaJo12}.

An alternately gradient descent algorithm was designed to solve the time-discretized proposed model, 
where the main calculations were only based on the easy-implemented linearized deformations. 
For spatiotemporal (2D space + time) parallel beam tomographic imaging, the computational cost 
of the algorithm is then $O(n^3NN_v)$ and its memory requirement scales as $O(n^2MN^2)$.  
With \cref{algo:Alternating_reconstruction}, we have evaluated the performance of the proposed model in dealing 
with the 2D space + time tomography in the case of very sparse and/or highly noisy data. As shown in these visual and 
quantitative results, the new method can yield reconstructed spatiotemporal images of high quality for the 
above difficult problems.

The future work will focus on the theoretical analysis of the proposed model, such as the existence and 
uniqueness of the solution, the convergence analysis of the proposed algorithm, and its extensions and 
applications to more complicated modalities in spatiotemporal imaging.

\appendix
\section{Optimality conditions}
\label{sec:Optimality_conditions}

The goal is to characterize optimality conditions for \cref{eq:VarReg_LDDMM_2}.  Let us begin with the following result.

\begin{lemma}[\cite{Yo10}]\label{lem:DerivDiffeo_flow}
Let $\velocityfield,\velocityfieldother \in \Xspace{2}$, and $\diffeo_{0,t}^{\velocityfield}$ denote the solution to 
the ODE in \cref{eq:FlowEq} with given $\velocityfield$ at time $t$, and $\gelement{s,t}{\velocityfield}$ be 
defined as in \cref{eq:FlowRelation}. Then, 
\begin{equation}
   \label{eq:PhiInvDeriv}
    \frac{d}{d \epsilon} \gelement{s,t}{\velocityfield + \epsilon\velocityfieldother} (x) \Bigl\vert_{\epsilon=0}
    = \int_{s}^{t} \Diff\bigl( \gelement{\tau,t}{\velocityfield} \bigr)\bigl( \gelement{s,\tau}{\velocityfield}(x) \bigr) 
             \Bigl( \velocityfieldother\bigl(\tau, \gelement{s,\tau}{\velocityfield}(x) \bigr) \Bigr) \dint \tau 
           \end{equation}
for $x \in \domain$ and $0 \leq s,t \leq 1$.
\end{lemma}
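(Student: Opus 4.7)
The plan is to derive a linear variational equation for the perturbed flow, identify the associated fundamental solution as the Jacobian of the unperturbed flow, and then apply Duhamel's formula. Throughout, the admissibility of $\Vspace$ is what gives enough regularity to differentiate with respect to $\epsilon$, interchange limits, and invoke $\Smooth^1$-dependence of ODE solutions on parameters and initial conditions.

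First I would fix $s \in [0,1]$ and $x \in \domain$ and set $\varphi^{\epsilon}(t) := \gelement{s,t}{\velocityfield + \epsilon \velocityfieldother}(x)$, so by \cref{eq:BasicODEInv_0_2} we have
\[
\partial_t \varphi^{\epsilon}(t) = \velocityfield\bigl(t, \varphi^{\epsilon}(t)\bigr) + \epsilon \velocityfieldother\bigl(t, \varphi^{\epsilon}(t)\bigr), \qquad \varphi^{\epsilon}(s) = x.
\]
Since $\velocityfield, \velocityfieldother$ lie in $\Xspace{2}$ and $\Vspace$ is admissible (so $\velocityfield(t,\Cdot), \velocityfieldother(t,\Cdot)$ are $\Smooth^1$ with spatial Lipschitz bounds integrable in $t$), classical parameter-dependence theory for Carathéodory ODEs gives that $\epsilon \mapsto \varphi^{\epsilon}(t)$ is differentiable at $\epsilon = 0$. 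Let $Z(t) := \frac{d}{d\epsilon}\varphi^{\epsilon}(t)\bigr|_{\epsilon = 0}$; differentiating the ODE in $\epsilon$ and exchanging derivatives yields the linear variational equation
\begin{equation}\label{eq:varEq_plan}
\partial_t Z(t) = \Diff \velocityfield\bigl(t, \gelement{s,t}{\velocityfield}(x)\bigr) Z(t) + \velocityfieldother\bigl(t, \gelement{s,t}{\velocityfield}(x)\bigr), \qquad Z(s) = 0.
\end{equation}

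The next step is to identify the fundamental matrix solution $\Phi(t,\tau)$ of \cref{eq:varEq_plan}. I would differentiate the semigroup identity $\gelement{s,t}{\velocityfield} = \gelement{\tau,t}{\velocityfield} \circ \gelement{s,\tau}{\velocityfield}$ from \cref{eq:FlowRelation} with respect to the spatial variable at $x$, obtaining by the chain rule
\[
\Diff\bigl(\gelement{s,t}{\velocityfield}\bigr)(x) = \Diff\bigl(\gelement{\tau,t}{\velocityfield}\bigr)\bigl(\gelement{s,\tau}{\velocityfield}(x)\bigr)\, \Diff\bigl(\gelement{s,\tau}{\velocityfield}\bigr)(x).
\]
Differentiating the ODE itself with respect to the initial spatial variable shows that $t \mapsto \Diff(\gelement{s,t}{\velocityfield})(x)$ solves $\partial_t M(t) = \Diff \velocityfield(t, \gelement{s,t}{\velocityfield}(x)) M(t)$ with $M(s) = \Id$, so $\Phi(t,\tau) = \Diff(\gelement{\tau,t}{\velocityfield})\bigl(\gelement{s,\tau}{\velocityfield}(x)\bigr)$ is the propagator of the homogeneous part of \cref{eq:varEq_plan}.

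Finally, applying Duhamel's (variation of constants) formula to \cref{eq:varEq_plan} with initial condition $Z(s)=0$ gives
\[
Z(t) = \int_{s}^{t} \Phi(t,\tau)\, \velocityfieldother\bigl(\tau, \gelement{s,\tau}{\velocityfield}(x)\bigr) \dint\tau,
\]
which substituting the expression for $\Phi(t,\tau)$ yields exactly \cref{eq:PhiInvDeriv}. The main obstacle in turning this sketch into a rigorous proof is the justification of parameter differentiability of $\varphi^{\epsilon}$ and the exchange of $\partial_{\epsilon}$ with $\partial_{t}$: the time-dependent vector fields need not be jointly $\Smooth^1$ in $(t,x)$, only $\LpSpace^2$-in-time values in the admissible space $\Vspace \subset \Smooth^1_0$, so one has to invoke the appropriate Carathéodory/Gronwall arguments (see \cite{Yo10}) to control $\bigl\Vert \varphi^{\epsilon}(t) - \varphi^{0}(t) - \epsilon Z(t) \bigr\Vert = o(\epsilon)$ uniformly on $[0,1]$.
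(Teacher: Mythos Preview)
The paper does not supply its own proof of this lemma; it is simply quoted from \cite{Yo10}. Your variational-equation plus Duhamel argument is exactly the standard derivation found there, and your identification of the propagator $\Phi(t,\tau)=\Diff(\gelement{\tau,t}{\velocityfield})(\gelement{s,\tau}{\velocityfield}(x))$ via the chain rule on the semigroup relation is correct, so the proposal is fine as a self-contained substitute for the citation.
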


Using the result of \cref{lem:DerivDiffeo_flow}, we have the statement below. 

\begin{lemma}\label{lem:DiffEvolutionOperator_tt}
Let the assumptions in \cref{lem:DerivDiffeo_flow} hold and $\template \in \LpSpace^2(\domain,\Real)$ be differentiable. 
Considering the group action in \cref{eq:GeometricDeformation}, then 
\begin{equation}\label{eq:DiffEvolutionOperator_tt}
  \frac{d}{d \epsilon} \bigl(\diffeo_{0,t}^{\velocityfield +  \epsilon\velocityfieldother} . \template\bigr) (x)  \Bigl\vert_{\epsilon=0} 
  = - \int_{0}^{t} 
      \Bigl\langle 
      \nabla (\diffeo_{0,\tau}^{\velocityfield} . \template)\bigl( \gelement{t,\tau}{\velocityfield}(x)\bigr),
           \velocityfieldother\bigl(\tau,\gelement{t,\tau}{\velocityfield}(x) \bigr) 
      \Bigr\rangle_{\Real^n} \dint \tau                       
\end{equation}  
for $x \in \domain$. 
\end{lemma}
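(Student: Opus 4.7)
The plan is to reduce the lemma to a direct application of \cref{lem:DerivDiffeo_flow} together with the chain rule and the flow semigroup identity $\gelement{\tau,0}{\velocityfield}\circ\gelement{t,\tau}{\velocityfield}=\gelement{t,0}{\velocityfield}$, which is immediate from the definition \cref{eq:FlowRelation} since $\diffeo_0^{\velocityfield}=\Id$.

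First, using the geometric group action \cref{eq:GeometricDeformation} and the relation $(\diffeo_{0,t}^{\velocityfield})^{-1}=\gelement{t,0}{\velocityfield}$ from \cref{eq:diff_trans}, I rewrite
\[
\diffeo_{0,t}^{\velocityfield+\epsilon\velocityfieldother}.\template \;=\; \template\circ\gelement{t,0}{\velocityfield+\epsilon\velocityfieldother}.
\]
Differentiating at $\epsilon=0$ by the chain rule yields
\[
\frac{d}{d\epsilon}\bigl(\diffeo_{0,t}^{\velocityfield+\epsilon\velocityfieldother}.\template\bigr)(x)\Bigl\vert_{\epsilon=0}
=\Bigl\langle \nabla\template\bigl(\gelement{t,0}{\velocityfield}(x)\bigr),\;\tfrac{d}{d\epsilon}\gelement{t,0}{\velocityfield+\epsilon\velocityfieldother}(x)\Bigl\vert_{\epsilon=0}\Bigr\rangle_{\Real^n}.
\]

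Next, I apply \cref{lem:DerivDiffeo_flow} with $s:=t$ and endpoint $0$ (flipping the orientation of the $\tau$-integral):
\[
\tfrac{d}{d\epsilon}\gelement{t,0}{\velocityfield+\epsilon\velocityfieldother}(x)\Bigl\vert_{\epsilon=0}
=-\int_0^t \Diff\bigl(\gelement{\tau,0}{\velocityfield}\bigr)\bigl(\gelement{t,\tau}{\velocityfield}(x)\bigr)\Bigl(\velocityfieldother\bigl(\tau,\gelement{t,\tau}{\velocityfield}(x)\bigr)\Bigr)\dint\tau.
\]
Substituting this into the previous display and moving the Jacobian to the other slot of the inner product gives an integrand of the form
\[
\Bigl\langle \Diff\bigl(\gelement{\tau,0}{\velocityfield}\bigr)\bigl(\gelement{t,\tau}{\velocityfield}(x)\bigr)^{\!\top}\nabla\template\bigl(\gelement{t,0}{\velocityfield}(x)\bigr),\;\velocityfieldother\bigl(\tau,\gelement{t,\tau}{\velocityfield}(x)\bigr)\Bigr\rangle_{\Real^n}.
\]

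The final step is to recognize the first slot as $\nabla(\diffeo_{0,\tau}^{\velocityfield}.\template)$ evaluated at $\gelement{t,\tau}{\velocityfield}(x)$. Indeed, again by \cref{eq:diff_trans} we have $\diffeo_{0,\tau}^{\velocityfield}.\template=\template\circ\gelement{\tau,0}{\velocityfield}$, so the chain rule (applied to the composition of $\template$ with a diffeomorphism) yields
\[
\nabla\bigl(\diffeo_{0,\tau}^{\velocityfield}.\template\bigr)(y)=\Diff\bigl(\gelement{\tau,0}{\velocityfield}\bigr)(y)^{\!\top}\nabla\template\bigl(\gelement{\tau,0}{\velocityfield}(y)\bigr).
\]
Setting $y=\gelement{t,\tau}{\velocityfield}(x)$ and using the semigroup identity $\gelement{\tau,0}{\velocityfield}\circ\gelement{t,\tau}{\velocityfield}=\gelement{t,0}{\velocityfield}$ produces exactly the vector appearing in the inner product above, whence \cref{eq:DiffEvolutionOperator_tt} follows.

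No substantial obstacle is anticipated; the argument is essentially bookkeeping. The only delicate points are keeping track of the transpose when transferring the Jacobian across the inner product, and carefully invoking the flow identity $\gelement{\tau,0}{\velocityfield}\circ\gelement{t,\tau}{\velocityfield}=\gelement{t,0}{\velocityfield}$ (which holds for all $t,\tau\in[0,1]$ with either ordering, since $\gelement{s,t}{\velocityfield}=\diffeo_t^{\velocityfield}\circ(\diffeo_s^{\velocityfield})^{-1}$). The smoothness assumptions on $\Vspace$ (admissibility, cf.\ \cref{def:Admissible,thm:FlowDiffeo}) together with the differentiability of $\template$ justify differentiating under the integral sign and interchanging derivatives with composition.
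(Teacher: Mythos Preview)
Your proof is correct and follows essentially the same route as the paper's: apply the chain rule to $\template\circ\gelement{t,0}{\velocityfield+\epsilon\velocityfieldother}$, invoke \cref{lem:DerivDiffeo_flow} with $s=t$ and endpoint $0$, and then identify the resulting integrand. The paper compresses your last step (transposing the Jacobian and using the semigroup identity to recognize $\nabla(\diffeo_{0,\tau}^{\velocityfield}.\template)(\gelement{t,\tau}{\velocityfield}(x))$) into the single phrase ``inserting \ldots\ we immediately prove,'' so your write-up is in fact more explicit than the original.
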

\begin{proof}
By chain rule we get  
\begin{equation}\label{eq:chainrule_evolution}
\frac{d}{d \epsilon} \bigl(\diffeo_{0,t}^{\velocityfield +  \epsilon\velocityfieldother} . \template\bigr)(x)  \Bigl\vert_{\epsilon=0}   =   \biggl\langle 
      \grad \template \bigl(\gelement{t,0}{\velocityfield}(x)\bigr),
      \frac{d}{d \epsilon} \gelement{t,0}{\velocityfield + \epsilon\velocityfieldother}(x)\Bigl\vert_{\epsilon=0}
      \biggr\rangle_{\Real^n}.
\end{equation}
Using \cref{lem:DerivDiffeo_flow}, we know 
 \begin{equation}\label{eq:derivative_inverse} 
 \frac{d}{d \epsilon} \gelement{t,0}{\velocityfield + \epsilon\velocityfieldother}(x)\Bigl\vert_{\epsilon=0} = 
 - \int_{0}^{t} \Diff\bigl( \gelement{\tau,0}{\velocityfield} \bigr)\bigl( \gelement{t,\tau}{\velocityfield}(x) \bigr) 
             \Bigl( \velocityfieldother\bigl(\tau, \gelement{t,\tau}{\velocityfield}(x) \bigr) \Bigr) \dint \tau.
 \end{equation}
 Inserting \cref{eq:derivative_inverse} into \cref{eq:chainrule_evolution}, we immediately prove  \cref{eq:DiffEvolutionOperator_tt}.
\end{proof}

The following result is a direct consequence of the above definition and \cref{lem:DiffEvolutionOperator_tt}.

\begin{lemma}\label{lem:data_matching_derivative}
Let the assumptions in \cref{lem:DiffEvolutionOperator_tt} hold and 
$\DataDisc_{g_t}\colon \RecSpace \to \Real$ be defined as \cref{eq:LDDMM_match_short}. 
Assuming that $\DataDisc_{g_t}$ is differentiable. Then
\begin{multline}\label{eq:Energy_func_deformation}
  \frac{d}{d \epsilon} \DataDisc_{g_t} \bigl(\diffeo_{0,t}^{\velocityfield + \epsilon\velocityfieldother} . \template \bigr)\Bigl\vert_{\epsilon=0} \\
    =  \int_{0}^{t}\Bigl\langle - \bigl\vert \Diff(\gelement{\tau,t}{\velocityfield}) \bigr\vert \grad\DataDisc_{g_t} \bigl(\diffeo_{0,t}^{\velocityfield} . \template\bigr)\bigl( \gelement{\tau,t}{\velocityfield}\bigr) \nabla (\diffeo_{0,\tau}^{\velocityfield} . \template),  \velocityfieldother(\tau, \Cdot) \Bigr\rangle_{\LpSpace^2(\domain,\Real^n)} \dint \tau,
  \end{multline}  
where $\grad\DataDisc_{g_t}$ denotes the gradient of $\DataDisc_{g_t}$.
\end{lemma}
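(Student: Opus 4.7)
The plan is to apply the chain rule to $\DataDisc_{g_t}\circ(\velocityfield\mapsto\diffeo_{0,t}^{\velocityfield}.\template)$, substitute the Gâteaux-derivative formula from \cref{lem:DiffEvolutionOperator_tt}, and finish with a spatial change of variables that shifts the composition with $\gelement{t,\tau}{\velocityfield}$ off of $\velocityfieldother$ and onto the other factors, producing a Jacobian determinant. Since $\DataDisc_{g_t}\colon\RecSpace\to\Real$ is assumed differentiable, the chain rule gives
\begin{equation*}
\frac{d}{d\epsilon}\DataDisc_{g_t}\bigl(\diffeo_{0,t}^{\velocityfield+\epsilon\velocityfieldother}.\template\bigr)\Bigl|_{\epsilon=0} = \Bigl\langle \grad\DataDisc_{g_t}(\diffeo_{0,t}^{\velocityfield}.\template),\, \tfrac{d}{d\epsilon}\bigl(\diffeo_{0,t}^{\velocityfield+\epsilon\velocityfieldother}.\template\bigr)\Bigl|_{\epsilon=0}\Bigr\rangle_{\LpSpace^2(\domain,\Real)},
\end{equation*}
and inserting \cref{eq:DiffEvolutionOperator_tt} turns the right-hand side into a double integral over $\domain\times[0,t]$ whose integrand contains the factor $\velocityfieldother(\tau,\gelement{t,\tau}{\velocityfield}(x))$.

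I would then swap the integrations by Fubini and perform the spatial substitution $y=\gelement{t,\tau}{\velocityfield}(x)$, equivalently $x=\gelement{\tau,t}{\velocityfield}(y)$ by \cref{eq:FlowRelation}. This introduces the Jacobian determinant $\bigl|\Diff(\gelement{\tau,t}{\velocityfield})(y)\bigr|$, sends every expression previously evaluated at $\gelement{t,\tau}{\velocityfield}(x)$ into the same expression evaluated at $y$, and sends the factor $\grad\DataDisc_{g_t}(\diffeo_{0,t}^{\velocityfield}.\template)(x)$ into $\grad\DataDisc_{g_t}(\diffeo_{0,t}^{\velocityfield}.\template)\bigl(\gelement{\tau,t}{\velocityfield}(y)\bigr)$. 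Grouping these three factors together in front of $\velocityfieldother(\tau,y)$ and recognising the spatial integral as the $\LpSpace^2(\domain,\Real^n)$ pairing against $\velocityfieldother(\tau,\Cdot)$ yields the stated identity, including the leading minus sign inherited from \cref{lem:DiffEvolutionOperator_tt}.

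The only delicate point is the validity of the change of variables, which follows directly from \cref{thm:FlowDiffeo} together with \cref{eq:FlowRelation}: for each $\tau\in[0,t]$ the map $\gelement{t,\tau}{\velocityfield}\colon\domain\to\domain$ is a $\Smooth^1$-diffeomorphism, so its Jacobian is continuous and nowhere vanishing and the pullback formula applies globally. The integrability needed for Fubini's theorem is inherited from the admissibility of $\Vspace$ (which bounds $\velocityfield$ and $\velocityfieldother$ uniformly in $x$), the smoothness of $\template$, and the assumed differentiability of $\DataDisc_{g_t}$, so no nonstandard argument is required. Consequently I expect the main work to lie in carefully bookkeeping the change of variables rather than in any conceptual obstacle.
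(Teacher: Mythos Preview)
Your proposal is correct and follows exactly the route the paper intends: the paper does not write out a proof at all, simply declaring the lemma ``a direct consequence of the above definition and \cref{lem:DiffEvolutionOperator_tt},'' and your chain rule plus change-of-variables argument is precisely the computation that justifies this claim. The only remark is that the paper treats the Fubini/integrability and change-of-variables justifications as implicit, so your careful bookkeeping goes slightly beyond what the paper records but is in no way a different approach.
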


We are now ready to characterize optimality conditions for \cref{eq:VarReg_LDDMM_2}.
\begin{theorem}\label{thm:energy_functional_derivative_2}
Let the assumptions in \cref{lem:data_matching_derivative} hold and 
$\GoalFunctionalV_1 \colon \RecSpace \times \Xspace{2} \to \Real$ denote the objective 
functional in \cref{eq:VarReg_LDDMM_2}, \ie
\begin{equation}\label{eq:LDDMM_vector_part2}
\GoalFunctionalV_1(\template, \velocityfield) :=  \int_{0}^{1} \left[\DataDisc_{\data_t}\bigl( \diffeo_{0,t}^{\velocityfield} . \template \bigr)  + 
   \mu_2\int_{0}^{t} \bigl\Vert \velocityfield(\tau,\cdot) \bigr\Vert^2_{\Vspace}\dint \tau \right] \dint t  + \mu_1 \RegFunc_1(\template).
\end{equation}
Assuming that the regularization term $\RegFunc_1$ is differentiable, and $\Vspace$ is a \ac{RKHS} with a reproducing 
kernel $\kernel \colon \domain \times \domain \to \Matrix_{+}^{n \times n}$. Then the $\Xspace{2}$--gradient \wrt the 
velocity field $\velocityfield$ of $\GoalFunctionalV_1(\template,\Cdot) \colon \Xspace{2} \to \Real$ is 
\begin{multline}\label{eq:Energy_functional_gradient_2}
  \grad_{\velocityfield}\GoalFunctionalV_1(\template, \velocityfield)(t,\Cdot)  
    = \Koperator\left(- \nabla (\diffeo_{0,t}^{\velocityfield} . \template) \int_{t}^{1} \bigl\vert \Diff(\gelement{t,\tau}{\velocityfield}) \bigr\vert \grad\DataDisc_{g_{\tau}} \bigl(\diffeo_{0,\tau}^{\velocityfield} . \template\bigr)\bigl( \gelement{t,\tau}{\velocityfield}\bigr)  \dint \tau \right) \\
    + 2\mu_2 (1-t)\velocityfield(t,\Cdot)
  \end{multline}
for $0\leq t \leq 1$ and where $\Koperator(\varphi) = \int_{\Omega} \kernel(\Cdot, y)\varphi(y) \dint y$.
Furthermore, the gradient \wrt the template $\template$ of $\GoalFunctionalV_1(\Cdot,\velocityfield) \colon \RecSpace \to \Real$  is   
\begin{equation}\label{eq:Energy_functional_gradient_template_2}
  \grad_{\template}\GoalFunctionalV_1(\template, \velocityfield)  \\
= \int_0^1 \bigl\vert \Diff(\gelement{0,t}{\velocityfield}) \bigr\vert \grad\DataDisc_{g_t} \bigl(\diffeo_{0,t}^{\velocityfield} . \template\bigr)\bigl( \gelement{0,t}{\velocityfield}\bigr) \dint t + \mu_1 \grad \RegFunc_1(\template), 
  \end{equation}
where $\grad\RegFunc_1$ denotes the gradient of $\RegFunc_1 \colon \RecSpace \to \Real$.
Finally, the optimality conditions for \cref{eq:VarReg_LDDMM_2} read as 
\begin{equation}\label{eq:OptCond_2}
\left\{
\begin{array}{ll}
 \grad_{\velocityfield}\GoalFunctionalV_1(\template, \velocityfield)(t,\Cdot) = 0, \\[0.5em]
 \grad_{\template}\GoalFunctionalV_1(\template, \velocityfield) = 0.
\end{array}
\right.
\end{equation}
\end{theorem}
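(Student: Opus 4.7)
The plan is to compute the Gâteaux derivative of $\GoalFunctionalV_1$ in each of its two arguments separately and then identify each derivative with an inner product against the test direction. For the velocity-field gradient, fix $\velocityfieldother \in \Xspace{2}$ and differentiate $\GoalFunctionalV_1(\template,\velocityfield+\epsilon\velocityfieldother)$ at $\epsilon=0$. The $\RegFunc_1$ term drops out. For the data-matching contribution, \cref{lem:data_matching_derivative} applied pointwise in $t$ gives a double integral of the form $\int_{0}^{1}\!\int_{0}^{t}\langle \cdot,\velocityfieldother(\tau,\Cdot)\rangle_{\LpSpace^2(\domain,\Real^n)}\,\mathrm{d}\tau\,\mathrm{d}t$. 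Applying Fubini on the triangular region $\{0\leq \tau \leq t \leq 1\}$ swaps this to $\int_{0}^{1}\!\int_{\tau}^{1}\ldots\,\mathrm{d}t\,\mathrm{d}\tau$, after which $\nabla(\diffeo_{0,\tau}^{\velocityfield}.\template)$ can be pulled outside the inner $t$-integral (it depends only on $\tau$). Relabeling $\tau \leftrightarrow t$ yields precisely the bracketed expression inside $\Koperator$ in \cref{eq:Energy_functional_gradient_2}.

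For the regularization-in-$\velocityfield$ term, expand the square to get $2\mu_2\int_{0}^{1}\!\int_{0}^{t}\langle \velocityfield(\tau,\Cdot),\velocityfieldother(\tau,\Cdot)\rangle_{\Vspace}\,\mathrm{d}\tau\,\mathrm{d}t$, and a Fubini swap on the same triangular region collapses the inner integral to the factor $(1-t)$, producing the second summand $2\mu_2(1-t)\velocityfield(t,\Cdot)$. The final step is to identify the $\Xspace{2}$-gradient: by definition we need the directional derivative to equal $\int_0^1 \langle \grad_{\velocityfield}\GoalFunctionalV_1(t,\Cdot),\velocityfieldother(t,\Cdot)\rangle_{\Vspace}\,\mathrm{d}t$, but the data-matching part comes out naturally as an $\LpSpace^2(\domain,\Real^n)$ pairing rather than a $\Vspace$ pairing. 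This is exactly where the \ac{RKHS} structure enters: the reproducing identity $\langle \Koperator(\varphi),\velocityfieldother(t,\Cdot)\rangle_{\Vspace}=\langle \varphi,\velocityfieldother(t,\Cdot)\rangle_{\LpSpace^2(\domain,\Real^n)}$ converts the $\LpSpace^2$ pairing into the desired $\Vspace$ pairing and inserts $\Koperator$ into the formula.

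For the template gradient, differentiate $\GoalFunctionalV_1(\template+\epsilon h,\velocityfield)$ at $\epsilon=0$. The term $\RegFunc_1$ yields $\mu_1\grad\RegFunc_1(\template)$ directly. Using the geometric action $\diffeo_{0,t}^{\velocityfield}.\template=\template\circ \gelement{t,0}{\velocityfield}$, linearity in $\template$ and the chain rule give, at each $t$, a pairing $\langle \grad\DataDisc_{\data_t}(\diffeo_{0,t}^{\velocityfield}.\template), h\circ \gelement{t,0}{\velocityfield}\rangle_{\LpSpace^2(\domain)}$. Applying the change of variables $y=\gelement{t,0}{\velocityfield}(x)$, whose Jacobian factor is $|\Diff(\gelement{0,t}{\velocityfield}(y))|$, transfers the composition onto $\grad\DataDisc_{\data_t}(\diffeo_{0,t}^{\velocityfield}.\template)$, isolating $h$ and yielding \cref{eq:Energy_functional_gradient_template_2} after integrating in $t$. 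The optimality system \cref{eq:OptCond_2} is then the standard first-order condition obtained by setting both gradients to zero.

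The main obstacles are purely bookkeeping: getting the domains of the Fubini swap exactly right (so the correct $(1-t)$ weight appears and the inner data-matching integral is over $[t,1]$ rather than $[0,t]$), ensuring the Jacobian factor from the change of variables corresponds to $\gelement{0,t}{\velocityfield}$ rather than its inverse, and recalling that the $\Xspace{2}$-gradient must live in $\Vspace$ pointwise in $t$, which is the step that forces the appearance of the kernel operator $\Koperator$. None of these require new ideas beyond the lemmas already available.
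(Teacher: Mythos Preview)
Your proposal is correct and follows essentially the same route as the paper: apply \cref{lem:data_matching_derivative}, swap the order of integration over the triangular region $\{0\le\tau\le t\le 1\}$ to obtain both the $\int_t^1$ inner integral and the $(1-t)$ weight, and then invoke the \ac{RKHS} identity \cref{eq:RKHS_L2} to convert the $\LpSpace^2(\domain,\Real^n)$ pairing into a $\Vspace$ pairing, which introduces $\Koperator$. Your treatment of the template gradient via change of variables is more explicit than the paper's (which simply declares \cref{eq:Energy_functional_gradient_template_2} ``rather straightforward''), but the argument is the same.
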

\begin{proof}
Applying the result in \cref{lem:data_matching_derivative}, we immediately have  
\begin{multline*}
  \frac{d}{d \epsilon}  \GoalFunctionalV_1(\template, \velocityfield + \epsilon\velocityfieldother)  \Bigl\vert_{\epsilon=0} \\ 
    =  \int_{0}^{1} \int_{0}^{t}\Bigl\langle - \bigl\vert \Diff(\gelement{\tau,t}{\velocityfield}) \bigr\vert \grad\DataDisc_{g_t} \bigl(\diffeo_{0,t}^{\velocityfield} . \template\bigr)\bigl( \gelement{\tau,t}{\velocityfield}\bigr) \nabla (\diffeo_{0,\tau}^{\velocityfield} . \template),  \velocityfieldother(\tau, \Cdot) \Bigr\rangle_{\LpSpace^2(\domain,\Real^n)} \dint \tau \dint t \\
    + 2\mu_2 \int_{0}^{1}\int_{0}^{t}\bigl\langle \velocityfield(\tau,\Cdot),  \velocityfieldother(\tau, \Cdot)\bigr\rangle_{\Vspace}\dint \tau \dint t.
  \end{multline*}  
Changing the order of integration in the above equation gives
 \begin{multline}\label{eq:Energy_functional_derivative_1}
  \frac{d}{d \epsilon}  \GoalFunctionalV_1(\template, \velocityfield + \epsilon\velocityfieldother)  \Bigl\vert_{\epsilon=0}  \\
   =  \int_{0}^{1} \Bigl\langle -\nabla (\diffeo_{0,\tau}^{\velocityfield} . \template) \int_{\tau}^{1} \bigl\vert \Diff(\gelement{\tau,t}{\velocityfield}) \bigr\vert \grad\DataDisc_{g_t} \bigl(\diffeo_{0,t}^{\velocityfield} . \template\bigr)\bigl( \gelement{\tau,t}{\velocityfield}\bigr)  \dint t,  \velocityfieldother(\tau, \Cdot) \Bigr\rangle_{\LpSpace^2(\domain,\Real^n)}\dint \tau \\
    + 2\mu_2 \int_{0}^{1}\bigl\langle (1-\tau)\velocityfield(\tau,\Cdot),  \velocityfieldother(\tau, \Cdot)\bigr\rangle_{\Vspace} \dint \tau.
     \end{multline}  
As $\Vspace$ is a \ac{RKHS} with a reproducing kernel represented 
by $\kernel \colon \domain \times \domain \to \Matrix_{+}^{n \times n}$, then 
\begin{equation}\label{eq:RKHS_L2}
 \langle \vfield, \vfieldother \rangle_{\LpSpace^2(\domain,\Real^n)} 
   = \biggl\langle \int_{\domain} \kernel(\Cdot, y) \vfield(y) \dint y, \vfieldother \biggr\rangle_{\Vspace} \quad\text{for $ \vfield, \vfieldother \in \Vspace$}.
\end{equation}
Combining \cref{eq:Energy_functional_derivative_1} with \cref{eq:RKHS_L2} proves \cref{eq:Energy_functional_gradient_2}.
Finally, the results in \cref{eq:Energy_functional_gradient_template_2} and \cref{eq:OptCond_2} are rather 
straightforward to obtain, so we omit their proofs. 
\end{proof}

\section{First-order variation of $\GoalFunctionalV_ {\template}$}
\label{sec:Gradient_computation_GoalFunctionalV}

\begin{theorem}\label{thm:energy_functional_time_discretized_derivative}
Let the assumptions in \cref{lem:DiffEvolutionOperator_tt} hold. 
Suppose $\GoalFunctionalV_ {\template} \colon \Xspace{2} \to \Real$ is given 
as in \cref{eq:VarReg_LDDMM_deformation_time_discrete} and $\Vspace$ is a \ac{RKHS} with a reproducing kernel 
$\kernel \colon \domain \times \domain \to \Matrix_{+}^{n \times n}$. 
The $\Xspace{2}$--gradient of $\GoalFunctionalV_{\template}$ is 
\begin{multline}\label{eq:Energy_functional_time_discretized_gradient}
  \grad\GoalFunctionalV_ {\template}(\velocityfield)(t,x)  
    =   - \frac{2}{N}\int_{\domain}\kernel(x,y)\nabla (\template \circ \gelement{t,0}{\velocityfield})(y) \sum_{\{i\geq 1 : t_i \geq t\}}h_{t, t_i}^{\template,\velocityfield}(y) \dint y \\
    + \frac{2\mu_2}{N}\sum_{\{i\geq 1 : t_i \geq t\}}\velocityfield_{t, t_i}(x),
  \end{multline}
  for $0\leq t \leq 1$ and $x\in \domain$.
\end{theorem}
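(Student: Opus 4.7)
The plan is to transcribe the continuous-time calculation behind \cref{thm:energy_functional_derivative_2} into the discrete-gate setting of \cref{eq:VarReg_LDDMM_deformation_time_discrete}, using the zero extensions $h$ and $\velocityfield_{t,t_i}$ from \cref{eq:Middle_func_deriv_1,eq:Middle_func_deriv_2} to absorb the variable upper integration limits $t_i$.

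First I would compute the directional derivative $\tfrac{d}{d\epsilon}\GoalFunctionalV_\template(\velocityfield+\epsilon\velocityfieldother)\big|_{\epsilon=0}$ for an arbitrary $\velocityfieldother\in\Xspace{2}$, splitting linearly over the $N$ gates and, within each gate, over the data fidelity and the shape regularization. For a fixed $i$, \cref{lem:data_matching_derivative} applied at $t=t_i$ yields an $\LpSpace^2$-pairing with $\velocityfieldother(\tau,\Cdot)$ integrated over $[0,t_i]$. Substituting the explicit gradient $\grad\DataDisc_{g_{t_i}}(f)=2\ForwardOpAdjoint_{t_i}(\ForwardOp_{t_i}(f)-\data(t_i,\Cdot))$ of the squared $\LpSpace^2$-discrepancy and using the geometric group action $\diffeo_{0,t_i}^{\velocityfield}.\template=\template\circ\gelement{t_i,0}{\velocityfield}$, the integrand collapses to $-2\,\eta_{\tau,t_i}^{\template,\velocityfield}\,\nabla(\template\circ\gelement{\tau,0}{\velocityfield})$ paired against $\velocityfieldother(\tau,\Cdot)$, by the very definition of $\eta$ in \cref{eq:h_t_ti}. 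Differentiating the shape regularization at gate $i$ contributes the additional term $2\mu_2\int_0^{t_i}\langle\velocityfield(\tau,\Cdot),\velocityfieldother(\tau,\Cdot)\rangle_\Vspace\dint\tau$.

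Next I would swap the finite sum over $i$ with the $\tau$-integration. Replacing $\eta_{\tau,t_i}^{\template,\velocityfield}$ and $\velocityfield(\tau,\Cdot)$ by their zero extensions $h_{\tau,t_i}^{\template,\velocityfield}$ and $\velocityfield_{\tau,t_i}$, each $\int_0^{t_i}$ lifts to $\int_0^1$ without changing its value, and Fubini on the finite sum converts $\sum_{i=1}^{N}\int_0^1$ into $\int_0^1\sum_{\{i\geq 1:\,t_i\geq\tau\}}$. Dividing by $N$ then collects everything into a single $\Xspace{2}$-pairing against $\velocityfieldother$, with an $\LpSpace^2$-pairing coming from the data fidelity and a $\Vspace$-pairing coming from the regularization. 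Applying the RKHS identity \cref{eq:RKHS_L2} converts the $\LpSpace^2$-pairing into a $\Vspace$-pairing and introduces the kernel operator $\Koperator$; reading off the coefficient of $\velocityfieldother(\tau,\Cdot)$ at each $\tau$ yields exactly \cref{eq:Energy_functional_time_discretized_gradient} after renaming $\tau$ to $t$.

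The main obstacle is purely bookkeeping: correctly identifying the restriction set $\{i:\,t_i\geq t\}$ after the sum/integral swap, and matching the $\eta$ and $h$ notations across \cref{eq:h_t_ti,eq:Middle_func_deriv_1,eq:Middle_func_deriv_2}. Once the zero extensions are in place the calculation is formally identical in structure to that of \cref{thm:energy_functional_derivative_2}, so no new analytic input is required; in particular no differentiability issue is introduced at the cut-offs $\tau=t_i$, since the extensions enter only under integrals and finite sums.
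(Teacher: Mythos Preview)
Your proposal is correct and follows essentially the same route as the paper's proof: compute the directional derivative gate-by-gate via \cref{lem:data_matching_derivative} (the paper cites the underlying \cref{lem:DiffEvolutionOperator_tt} directly, but this is the same step), insert the zero extensions \cref{eq:Middle_func_deriv_1,eq:Middle_func_deriv_2} to lift each $\int_0^{t_i}$ to $\int_0^1$, swap the finite sum with the integral, and apply the RKHS identity \cref{eq:RKHS_L2} to read off the $\Xspace{2}$-gradient. Your explicit identification of $\grad\DataDisc_{g_{t_i}}$ for the squared $\LpSpace^2$-discrepancy is the one detail the paper leaves implicit when passing to $\eta_{\tau,t_i}^{\template,\velocityfield}$.
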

\begin{proof}
From \cref{lem:DiffEvolutionOperator_tt} it is not difficult to derive   
\begin{align*}
\frac{d}{d \epsilon}  \GoalFunctionalV_ {\template}(\velocityfield + \epsilon\velocityfieldother)  \Bigl\vert_{\epsilon=0} 
&=  \frac{1}{N}\sum_{i=1}^{N} \int_{0}^{t_i}\Bigl\langle -2 \eta_{\tau, t_i}^{\template, \velocityfield} \nabla (\template \circ \gelement{\tau,0}{\velocityfield}),  \velocityfieldother(\tau, \Cdot) \Bigr\rangle_{\LpSpace^2(\domain,\Real^n)} \dint \tau \\
  &\quad\quad  + \frac{\mu_2}{N}\sum_{i=1}^{N} \int_{0}^{t_i}\Bigl\langle 2\velocityfield(\tau,\Cdot),  \velocityfieldother(\tau, \Cdot)\Bigr\rangle_{\Vspace}\dint \tau \\
        &=   \int_{0}^{1} \biggl\langle -\frac{2}{N}\sum_{i=1}^{N}h_{\tau, t_i}^{\template,\velocityfield} \nabla (\template \circ \gelement{\tau,0}{\velocityfield}),  \velocityfieldother(\tau, \Cdot) \biggr\rangle_{\LpSpace^2(\domain,\Real^n)} \dint \tau \\
   &\quad\quad  +  \int_{0}^{1}\biggl\langle \frac{2\mu_2}{N}\sum_{i=1}^{N}\velocityfield_{\tau, t_i}(\Cdot),  \velocityfieldother(\tau, \Cdot)\biggr\rangle_{\Vspace}\dint \tau\\
  &=  \int_{0}^{1} \biggl\langle -\frac{2}{N}\sum_{\{i\geq 1 : t_i \geq t\}}h_{t, t_i}^{\template,\velocityfield} \nabla (\template \circ \gelement{t,0}{\velocityfield}),  \velocityfieldother(t, \Cdot) \biggr\rangle_{\LpSpace^2(\domain,\Real^n)} \dint t \\
   &\quad\quad +  \int_{0}^{1}\biggl\langle \frac{2\mu_2}{N}\sum_{\{i\geq 1 : t_i \geq t\}}\velocityfield_{t, t_i}(\Cdot),  \velocityfieldother(t, \Cdot)\biggr\rangle_{\Vspace}\dint t.
  \end{align*}
The last two equations are obtained by inserting \cref{eq:Middle_func_deriv_1} and \cref{eq:Middle_func_deriv_2}. 
Combining the above with \cref{eq:RKHS_L2} proves  \cref{eq:Energy_functional_time_discretized_gradient}.
\end{proof}

\section*{Acknowledgments}
The authors would like to thank Alain Trouv\'e for his helpful discussions.

\bibliographystyle{plain}
\bibliography{shapereferences}

\end{document}